\newcommand{\N}{\mathbb{N}}
\newcommand{\R}{\mathbb{R}}
\newcommand{\B}{\mathfrak{B}}
\newtheorem{prop}{Proposition}[section]
\newtheorem{cor}[prop]{Corollary}
\newtheorem{teo}[prop]{Theorem}
\newtheorem{lema}[prop]{Lemma}
\theoremstyle{definition}
\newtheorem{defi}[prop]{Definition}
\newtheorem*{obs}{Remark}
\newtheorem*{acknowledgements}{Acknowledgements}
\title{Characterization of the $n$-dimensional Sierpi\'nski carpet as an inverse limit of closed balls}
\author{Lucas H. R. de Souza}
\begin{document}

\DeclareGraphicsExtensions{.pdf,.jpg,.mps,.png,}

\maketitle

\def\eod{\hfill$\square$}

\begin{abstract}In this paper we generalize, for any dimension, a theorem of Tshishiku and Walsh that characterizes the Sierpi\'nski carpet as a limit of a set of maps from the disc to the sphere. 
\end{abstract}

\textbf{Keywords} \ Sierpi\'nski carpet $\cdot$ inverse limit

\

\textbf{Mathematics Subject Classification (2010)} \ Primary 54F65 $\cdot$ 54F15 Secondary 54F17 $\cdot$ 54E45

\


\let\thefootnote\relax\footnote{BELO HORIZONTE, MG, BRAZIL.}

\let\thefootnote\relax\footnote{\textit{E-mail address:} \textbf{henrique.lucas20003@gmail.com} \ \ ORCID: \textbf{0000-0001-5661-8705}}

\tableofcontents

\section*{Introduction}

In \cite{Wh}, Whyburn did two different topological characterizations of the $1$-dimensional Sierpi\'nski carpet. After that, Cannon \cite{Ca} generalized one of these characterizations for any dimension different than $3$. It says that if we take an $n$-dimensional sphere and remove a countable set of tame $n$-balls such that their union is dense and their diameter tends to $0$, then the space that we get does not depend on the choice of the removed balls. Such space is the $n-1$-dimensional Sierpi\'nski carpet.

Recently, Tshishiku and Walsh did another characterization of the $1$-dimensional Sierpi\'nski carpet as an inverse limit of closed discs (Lemma 3.1 of \cite{TW} and its correction in \cite{TW2}) for the purpose of using it to have a topological characterization of some boundaries of groups. Roughly speaking, it says that if we take a sphere, choose a countable dense subset of it and for each of these points we remove it and replace it nicely with a circle, then we get a $1$-dimensional Sierpi\'nski carpet. With a similar purpose, this paper is devoted to generalizing Tshishiku and Walsh result for any dimension. It consists of the following:

\

$\!\!\!\!\!\!\!\!\!\!\!\!$ \textbf{Theorem \ref{blowupissierpinski}} Let $P$ be a countable dense subset of the sphere $S^{n}$. Let, for every $p \in P$, $\pi_{p}: D^{n} \rightarrow S^{n}$ be the map that collapses the boundary of the $n$-ball to the point $p$. Then the limit space $\lim\limits_{\longleftarrow} \{D^{n},\pi_{p}\}_{p \in P}$ is homeomorphic to the $n-1$-dimensional Sierpi\'nski carpet.

\

Constructions of the $n-1$-dimensional Sierpi\'nski carpet as inverse limits of $n$-balls are well known in the literature, as seen in \cite{AO} (proof of Theorem 1 (1)), \cite{BNW} (proof of Theorem 3) and \cite{BO} (proof of Theorem 1.2). However, as far as I know, all of these constructions are special cases of the theorem above, since the inverse limits described there depend on extra structures, like well behaved homeomorphisms or group actions. 

In \cite{So5} we will use this characterization to construct some Bowditch boundaries of relatively hyperbolic groups that are homeomorphic to the $n$-dimensional Sierpi\'nski carpet. For instance, if $(G,\mathcal{P})$ is a relatively hyperbolic pair with its Bowditch boundary homeomorphic to $S^{3}$, $\mathcal{Q}$ is a proper subset of $\mathcal{P}$ such that every group in $\mathcal{P}-\mathcal{Q}$ is virtually torsion-free and hyperbolic, then the Bowditch boundary of $(G,\mathcal{Q})$ is homeomorphic to the $2$-dimensional Sierpi\'nski carpet (Theorem 2.8 of \cite{So5}).

\

\

\

\begin{acknowledgements}This paper contains part of my PhD thesis. It was written under the advisorship of Victor Gerasimov, to whom I am grateful. I would also like to thank to Jan Boro\'nski (who showed me the references \cite{AO}, \cite{BNW} and \cite{BO}) and Kazuhiro Kawamura (who showed me that Freedman's theorem would solve the problem on the restriction of Cannon's theorem in dimension $4$) for the nice conversations that I had with each of them about this paper.
\end{acknowledgements}

\section{Preliminaries}

Here we list some well known topological results that are used in the next section.

Some notation that we use in this paper:

\begin{enumerate}
    \item If $X$ is is a topological space and $Y \subseteq X$, then the closure of $Y$ is denoted by $\bar{Y}$ or $Cl(Y)$ and the interior of $Y$ is denoted by $int(Y)$.
    \item If $X$ is a metric space, $Y \subseteq X$ and $\epsilon > 0$, then the $\epsilon$-neighborhood of $Y$ is denoted by $\B(Y,\epsilon)$.
\end{enumerate}

\subsection{Homogeneity}

\begin{defi}A topological space $X$ is strongly locally homogeneous if $\forall x \in X$, $\forall U$ neighbourhood of $x$, there exists an open set $V$ such that $x \in V \subseteq U$ and $\forall y \in V$, there exists a homeomorphism $f: X \rightarrow X$ such that $f(x) = y$ and $f|_{X-U} = id_{X-U}$. \end{defi}

\begin{defi}A topological space $X$ is countable dense homogeneous if for every $A,B$ countable dense subsets of $X$, there exists a homeomorphism $f: X \rightarrow X$ such that $f(A) = B$.
\end{defi}

\begin{prop}\label{cotabledensehomogeneous}Let $X$ be a metrizable locally compact strongly locally homogeneous space and $(\bar{X},d)$ a metric compactification of $X$. If $A$ and $B$ are two countable dense subsets of $X$, there exists a homeomorphism $f: \bar{X} \rightarrow \bar{X}$ such that $f(A) = B$ and $f|_{\bar{X}-X} = id_{\bar{X}-X}$.
\end{prop}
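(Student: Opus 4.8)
The plan is a back-and-forth (Cantor--Schr\"oder--Bernstein style) construction producing $f$ as an infinite composition of homeomorphisms, each supported in a small compact subset of $X$, together with an ``inductive convergence'' bookkeeping that forces the limit to be a homeomorphism of $\bar{X}$ fixing $\bar{X}-X$. Two preliminary facts set the stage. Since $X$ is locally compact Hausdorff and dense in the compact Hausdorff space $\bar{X}$, one checks that $X$ is open in $\bar{X}$, so $\bar{X}-X$ is closed. And (an \emph{extension lemma}) if $U\subseteq X$ is open with $Cl_X(U)$ compact and $h\colon X\to X$ is a homeomorphism with $h|_{X-U}=\mathrm{id}$, then $Cl_{\bar X}(U)=Cl_X(U)\subseteq X$ and extending $h$ by the identity on $\bar{X}-X$ yields a homeomorphism $\bar h\colon\bar{X}\to\bar{X}$ with $\bar h|_{\bar X-X}=\mathrm{id}$ and $\bar h(X)=X$; indeed $\bar h$ restricts to a continuous map on each of the two open sets $X$ and $\bar{X}-Cl_{\bar X}(U)$, which cover $\bar{X}$, and likewise for $\bar h^{-1}$. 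Combining this with strong local homogeneity: given $x\in X$ and a neighbourhood $U$ of $x$ with compact closure inside $X$, we get an open $V$ with $x\in V\subseteq U$ such that every $y\in V$ is the image of $x$ under a homeomorphism of $\bar{X}$ that is the identity outside $U$ and moves every point by less than $\mathrm{diam}(U)$.

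Enumerate $A=\{a_i\}_{i\ge1}$ and $B=\{b_i\}_{i\ge1}$. (We may assume $X$ has no isolated points: an isolated point lies in every dense set, hence in $A\cap B$, and the at most countably many of them are handled trivially; so every dense set we manipulate meets every nonempty open set in an infinite set.) I will build homeomorphisms $g_n$ of $\bar{X}$ and finite injective partial maps $\phi_n$ from $A$ to $B$ with $\phi_{n-1}\subseteq\phi_n$, so that $f_n:=g_n\circ\cdots\circ g_1$ is the identity on $\bar{X}-X$, restricts to a homeomorphism of $X$, and satisfies $f_n(a)=\phi_n(a)$ for all $a\in\mathrm{dom}\,\phi_n$. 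At odd stage $n=2k-1$ ensure $a_k\in\mathrm{dom}\,\phi_n$: if it is already matched put $g_n=\mathrm{id}$; otherwise let $p=f_{n-1}(a_k)\in X$, choose an open $U\ni p$ with $Cl_X(U)$ compact, $\mathrm{diam}(U)$ small (see below), and $Cl_{\bar X}(U)$ disjoint from the finite set $f_{n-1}(\mathrm{dom}\,\phi_{n-1})\subseteq B$; apply strong local homogeneity to $(p,U)$ to get $V$; pick $b\in B\cap V$ outside the finite set $\mathrm{range}\,\phi_{n-1}$ (possible as $B\cap V$ is infinite); let $g_n$ be the homeomorphism from the extension lemma sending $p$ to $b$; set $\phi_n=\phi_{n-1}\cup\{(a_k,b)\}$. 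At even stage $n=2k$ ensure $b_k\in\mathrm{range}\,\phi_n$ symmetrically: if already there put $g_n=\mathrm{id}$; otherwise pick small open $U\ni b_k$ as above with $Cl_{\bar X}(U)$ missing $f_{n-1}(\mathrm{dom}\,\phi_{n-1})$, get $V$, and using density of $f_{n-1}(A\setminus\mathrm{dom}\,\phi_{n-1})$ choose $a_j\in A\setminus\mathrm{dom}\,\phi_{n-1}$ with $f_{n-1}(a_j)\in V$; let $g_n$ send $f_{n-1}(a_j)$ to $b_k$ (the inverse of a strong-local-homogeneity homeomorphism), and set $\phi_n=\phi_{n-1}\cup\{(a_j,b_k)\}$. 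In every case $g_n$ fixes $f_{n-1}(\mathrm{dom}\,\phi_{n-1})$ pointwise, so once a pair $(a,b)$ enters $\phi_n$ one has $f_m(a)=b$ for all $m\ge n$.

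The convergence is arranged as follows. At stage $n$ I further require $\mathrm{diam}(U)<2^{-n}$ and $\mathrm{diam}(U)<\delta_n$, where $\delta_n>0$ is chosen by uniform continuity of $f_{n-1}^{-1}$ on the compact metric space $\bar{X}$ so that $d(u,v)<\delta_n$ implies $d(f_{n-1}^{-1}(u),f_{n-1}^{-1}(v))<2^{-n}$; crucially $\delta_n$ depends only on the already-built $f_{n-1}$. Then $d(f_n(x),f_{n-1}(x))=d(g_n(f_{n-1}(x)),f_{n-1}(x))<2^{-n}$ for all $x$, and since $f_n^{-1}=f_{n-1}^{-1}\circ g_n^{-1}$ with $g_n^{-1}$ moving points less than $\mathrm{diam}(U)<\delta_n$, also $d(f_n^{-1}(x),f_{n-1}^{-1}(x))<2^{-n}$ for all $x$. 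Hence $(f_n)$ and $(f_n^{-1})$ are uniformly Cauchy, converging uniformly to continuous maps $f$ and $g$ of $\bar{X}$; using uniform continuity of $f$ and of $g$ together with $f_n\circ f_n^{-1}=f_n^{-1}\circ f_n=\mathrm{id}$, one gets $f\circ g=g\circ f=\mathrm{id}$, so $f$ is a homeomorphism. Each $f_n$ fixes $\bar{X}-X$, so $f|_{\bar X-X}=\mathrm{id}$. By the stability observed above, $f(a_k)\in B$ for every $k$ (odd stages) and every $b_k$ lies in $f(A)$ (even stages), whence $f(A)=B$.

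The step I expect to be the main obstacle is this convergence package: ensuring the infinite composition of small-support homeomorphisms converges not merely to a continuous surjection but to a genuine homeomorphism, while staying the identity on $\bar{X}-X$. This is precisely what forces the local-compactness hypothesis (via the extension lemma, so that each building block extends continuously over the remainder with compact support) and the simultaneous $2^{-n}$-control of $f_n$ and $f_n^{-1}$, with $\delta_n$ read off from the \emph{previously} constructed map rather than the current one. The back-and-forth accounting (skipping already-matched points, keeping $\phi_n$ injective, and choosing $U$ to avoid the finitely many frozen image points) is routine by comparison.
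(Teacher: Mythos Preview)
Your proposal is correct and follows essentially the same back-and-forth strategy as the paper's proof: build $f$ as a uniform limit of finite compositions of homeomorphisms, each supported in a small ball whose closure lies inside $X$, alternately forcing $a_k$ to land in $B$ and $b_k$ to be hit from $A$, and then pass to the limit. The only notable difference is in the convergence bookkeeping: the paper alternates between post-composing (odd steps) and pre-composing (even steps) by the small-support homeomorphism to get direct $2^{-n}$ control, while you always post-compose and instead invoke the modulus $\delta_n$ of $f_{n-1}^{-1}$ to control the inverses --- your version is in fact a bit more explicit on this point.
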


\begin{obs}This proposition slightly generalizes Bennett's Theorem (Theorem 3 of \cite{Ben}) and the proof is essentially the same. We write it here for the sake of completeness.
\end{obs}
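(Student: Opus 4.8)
The plan is to follow Bennett's original argument (Theorem 3 of \cite{Ben}), adapting it to the compactification setting. So this ``proof'' is really a description of how the $\bar X$-version differs from Bennett's $X$-version, confirming the claim made in the Remark that the argument is essentially the same.

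First I would set up a back-and-forth construction. Enumerate $A = \{a_1, a_2, \dots\}$ and $B = \{b_1, b_2, \dots\}$, and build a sequence of homeomorphisms $h_k : \bar X \to \bar X$, each fixing $\bar X \setminus X$ pointwise, such that the composite $f = \lim h_k \circ \cdots \circ h_1$ is a well-defined homeomorphism carrying $A$ onto $B$. The strong local homogeneity of $X$ supplies, for any $x \in X$ and any neighbourhood $U$ of $x$ in $X$, a homeomorphism of $X$ moving $x$ to any nearby point and fixing $X \setminus U$; since $X$ is open in $\bar X$ and such a map is the identity near the ``old'' boundary of $U$ in $X$, it extends by the identity to a homeomorphism of $\bar X$ fixing $\bar X \setminus X$. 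At stage $2k-1$ I would send the first not-yet-hit point of $A$ into $B$, and at stage $2k$ pull the first not-yet-hit point of $B$ back onto $A$, exactly as in Bennett's proof.

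The one genuinely new ingredient compared with \cite{Ben} is ensuring convergence of the infinite composition to a \emph{homeomorphism} of the compact metric space $(\bar X, d)$, rather than merely to a bijection of $X$. Here I would use the metric $d$ to control supports: at each stage choose the local-homogeneity neighbourhood $V$ (hence the support of $h_k$) to have $d$-diameter less than $2^{-k}$ and, crucially, to be disjoint from the finitely many points of $A$ and $B$ already matched and from a shrinking sequence of $d$-balls around the (at most countably many) ``target'' points, so that the sequences $h_k \circ \cdots \circ h_1$ and its inverses are uniformly Cauchy in the sup-metric. Because each $h_k$ is the identity off a set of small $d$-diameter inside $X$, the limit $f$ is continuous on all of $\bar X$, equals the identity on $\bar X \setminus X$, and is a homeomorphism by the standard fact that a continuous bijection of a compact Hausdorff space is a homeomorphism.

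The main obstacle is precisely this bookkeeping for the diameters of the supports: one must interleave the ``send $a_i$ to $B$'' and ``hit $b_i$ from $A$'' requirements with the ``support has small $d$-diameter and avoids the already-settled points'' requirements, and verify that local compactness together with strong local homogeneity leaves enough room to do so at every stage. This is exactly the point at which local compactness of $X$ (not needed in Bennett's compact setting) enters, and it is the only place where the proof is not literally Bennett's; everything else is a transcription. \eod
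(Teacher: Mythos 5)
Your proposal follows the same strategy as the paper's proof: a back‑and‑forth enumeration of $A$ and $B$, with each move realized by a strong‑local‑homogeneity homeomorphism of $X$ whose support is a small $d$‑ball whose closure stays inside $X$ (hence extends by the identity to $\bar X$, fixing $\bar X \setminus X$), and with summable support diameters so that the compositions and their inverses are uniformly Cauchy on $(\bar X,d)$. The only cosmetic difference is at the very end: rather than invoking "continuous bijection of a compact Hausdorff space is a homeomorphism," the paper takes the uniform limits $f$ and $g$ of the forward and inverse compositions and checks $g\circ f=\mathrm{id}$ and $f\circ g=\mathrm{id}$ on the dense sets $A$ and $B$, which is the cleanest way to certify bijectivity in the first place — you are implicitly doing the same thing, since you also observe that the inverse compositions converge.
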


\begin{proof} Let's construct a family of homeomorphisms $\{f_{n}\}_{n \in \N}$, using a back and forth argument, that converges to the homeomorphism that we need.

Let $A = \{a_{i}\}_{i\in \N}$ and $B = \{b_{i}\}_{i\in \N}$. Let $i_{1}\in \N$ such that $Cl_{\bar{X}}(\B(a_{1},\frac{1}{2^{i_{1}}})) \subseteq X$. Since $X$ is strongly locally homogeneous, there exists $V_{1} \subseteq \B(a_{1},\frac{1}{2^{i_{1}}})$ an open set and $b \in V_{1}\cap B$ such that there is a homeomorphism $h_{1}: X \rightarrow X$ with $h_{1}(a_{1}) = b$ and $h_{1}|_{X-\B(a_{1},\frac{1}{2^{i_{1}}})} = id_{X-\B(a_{1},\frac{1}{2^{i_{1}}})}$. Since $Cl_{\bar{X}}(\B(a_{1},\frac{1}{2^{i_{1}}}))$ do not intersect $\bar{X}-X$, the map $h_{1}$ extends to a homeomorphism $f_{1}: \bar{X} \rightarrow \bar{X}$ such that $f_{1}|_{\bar{X}-X} = id_{\bar{X}-X}$.

Suppose we have a homeomorphism $f_{2n}: \bar{X} \rightarrow \bar{X}$ such that $f_{2n}|_{\bar{X}-X} = id_{\bar{X}-X}$, $f_{2n}(\{a_{1},...,a_{n}\}) \subseteq B$ and $f^{-1}_{2n}(\{b_{1},...,b_{n}\}) \subseteq A$. If $f_{2n}(a_{n+1}) \in B$, take $f_{2n+1} = f_{2n}$. Suppose that $f_{2n}(a_{n+1}) \notin B$. There is an index $i_{2n+1} \geqslant 2n+1$ such that $Cl_{\bar{X}}\B(f_{2n}(a_{n+1}),\frac{1}{2^{i_{2n+1}}})$ does not intersect the set $\{f_{2n}(a_{1}),...,f_{2n}(a_{n}),b_{1},...,b_{n}\} \cup (\bar{X}-X)$. Then there exists an open set $V_{2n+1}$ such that $f_{2n}(a_{n+1}) \in V_{2n+1} \subseteq \B(f_{2n}(a_{n+1}),\frac{1}{2^{i_{2n+1}}})$ and a homeomorphism $h_{2n+1}: \bar{X} \rightarrow \bar{X}$ such that $h_{2n+1}(f_{2n}(a_{2n+1})) \in V \cap B$ and $h_{2n+1}|_{\bar{X}-\B(f_{2n}(a_{n+1}),\frac{1}{2^{i_{2n+1}}})} = id_{\bar{X}-\B(f_{2n}(a_{n+1}),\frac{1}{2^{i_{2n+1}}})}$. Take $f_{2n+1}= h_{2n+1}\circ f_{2n}$. We have that  $f_{2n+1}|_{\bar{X}-X} = id_{\bar{X}-X}$, $f_{2n+1}|_{\{a_{1},...,a_{n}\}} = f_{2n}|_{\{a_{1},...,a_{n}\}}$, $\forall i \in \{1,...,n+1\}$, $f_{2n+1}(a_{i}) \in B$, $f^{-1}_{2n+1}|_{\{b_{1},...,b_{n}\}} = f^{-1}_{2n}|_{\{b_{1},...,b_{n}\}}$ and $\forall i \in \{1,...,n\}$, $f^{-1}_{2n+1}(b_{i}) \in A$.

Suppose we have a homeomorphism $f_{2n+1}: \bar{X} \rightarrow \bar{X}$ such that $f_{2n+1}|_{\bar{X}-X} = id_{\bar{X}-X}$, $f_{2n+1}(\{a_{1},...,a_{n+1}\}) \subseteq B$ and $f^{-1}_{2n+1}(\{b_{1},...,b_{n}\}) \subseteq A$. If $f^{-1}_{2n+1}(b_{n+1}) \in A$, take $f_{2n+2} = f_{2n+1}$.  Suppose that $f^{-1}_{2n+1}(b_{n+1}) \notin A$. There exists $i_{2n+2} \in \N$ such that $2n+2 \leqslant i_{2n+2}$ and the set  $Cl_{\bar{X}}(\B(f^{-1}_{2n+1}(b_{n+1}),\frac{1}{2^{i_{2n+2}}}))$ do not intersect $\{f^{-1}_{2n+1}(b_{1}),...,f^{-1}_{2n+1}(b_{n}),a_{1},...,a_{n+1}\} \cup (\bar{X}-X)$. Then there exists an open set $V_{2n+2}$ such that $f^{-1}_{2n+1}(b_{n+1}) \in V_{2n+2} \subseteq \B(f^{-1}_{2n+1}(b_{n+1}),\frac{1}{2^{i_{2n+2}}})$ and a homeomorphism $h_{2n+2}: \bar{X} \rightarrow \bar{X}$ such that $h^{-1}_{2n+2}(f^{-1}_{2n+1}(b_{n+1})) \in V \cap A$ and $h_{2n+2}|_{\bar{X}-\B(f^{-1}_{2n+1}(b_{n+1}),\frac{1}{2^{i_{2n+2}}})} = id_{\bar{X}-\B(f^{-1}_{2n+1}(b_{n+1}),\frac{1}{2^{i_{2n+2}}})}$. Take $f_{2n+2}= f_{2n+1}\circ h_{2n+2}$. We have that  $f_{2n+2}|_{\bar{X}-X} = id_{\bar{X}-X}$, $f_{2n+2}|_{\{a_{1},...,a_{n+1}\}} = f_{2n+1}|_{\{a_{1},...,a_{n+1}\}}$, $\forall i \in \{1,...,n+1\}$, $f_{2n+2}(a_{i}) \in B$, $f^{-1}_{2n+2}|_{\{b_{1},...,b_{n}\}} = f^{-1}_{2n+1}|_{\{b_{1},...,b_{n}\}}$ and $\forall i \in \{1,...,n+1\}$, $f^{-1}_{2n+2}(b_{i}) \in A$.

We have that, if $n \leqslant m$, $\rho(f_{m},f_{n}) \leqslant \sum_{j = n}^{m}\frac{1}{2^{i_{j}-1}}$, where $\rho$ is the uniform distance. Since $\{\sum_{j = n}^{\infty}\frac{1}{2^{i_{j}}}\}_{n \in \N}$ converges to $0$, $\{f_{n}\}_{n \in \N}$ is a Cauchy sequence, which implies that it converges uniformly to a continuous map $f: \bar{X} \rightarrow \bar{X}$. Analogously, $\{f^{-1}_{n}\}_{n \in \N}$ converges uniformly to a continuous map $g: \bar{X} \rightarrow \bar{X}$. Since $\forall n \in \N$, $f_{n}|_{\bar{X}-X} = id_{\bar{X}-X}$, we have that $f|_{\bar{X}-X} = id_{\bar{X}-X}$. We have also that $\forall i \in \N$, there exists $n \in \N$ such that $\forall n' > n$, $f_{n'}(a_{i}) = f_{n}(a_{i}) \in B$ and $f^{-1}_{n'}(b_{i}) = f^{-1}_{n}(b_{i}) \in A$, which implies that $f(a_{i}) = f_{n}(a_{i})$ and $g(b_{i}) = f^{-1}_{n}(b_{i})$. So $f(A) = B$, $g(B) = A$, $g \circ f|_{A} =id_{A}$ and $f \circ g|_{B} = id_{B}$. Since $A$ and $B$ are dense on $\bar{X}$, then the maps $f$ and $g$ are inverses. Thus $f$ is the homeomorphism that we need.
\end{proof}

\begin{obs}We have that manifolds with no boundary are strongly homogeneous, which implies that a manifold $M$ with boundary have the property that for every $A,B$ countable dense subsets of $M$ that do not intersect the boundary,  there exists a homeomorphism $f: M \rightarrow M$ such that $f(A) = B$. In particular, Bennett's Theorem implies that manifolds without boundary are countable dense homogeneous.
\end{obs}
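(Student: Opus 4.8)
The plan is to establish the remark's three assertions in turn. \emph{First}, that an $n$-manifold $N$ without boundary is strongly locally homogeneous. Given $x\in N$ and a neighbourhood $U$ of $x$, pick a chart around $x$ and, after shrinking its domain and rescaling, obtain $\varphi: W\to\R^{n}$ with $x\in W\subseteq U$, $\varphi(x)=0$ and $\varphi(W)=\B(0,3)$. Put $V:=\varphi^{-1}(\B(0,1))$, an open neighbourhood of $x$ contained in $U$. For $y\in V$ write $v:=\varphi(y)\in\B(0,1)$. The crux is the elementary fact that such a $v$ can be carried to $0$ by a homeomorphism of $\R^{n}$ supported in $\B(0,2)$: concretely, let $h_{v}: \R^{n}\to\R^{n}$ be the time-one map of the flow of the compactly supported vector field $z\mapsto\beta(z)\,v$, where $\beta$ is a cut-off function equal to $1$ on $\overline{\B(0,1)}$ and to $0$ outside $\B(0,2)$; then $h_{v}(0)=v$ and $h_{v}|_{\R^{n}-\B(0,2)}=id$. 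Since $h_{v}$ is the identity on $\B(0,3)-\overline{\B(0,2)}$ and $\varphi^{-1}(\overline{\B(0,2)})$ is a compact, hence closed, subset of $W$, the pasting lemma shows that the map equal to $\varphi^{-1}\circ h_{v}\circ\varphi$ on $W$ and to $id$ on $N-\varphi^{-1}(\overline{\B(0,2)})$ is a well-defined homeomorphism $f: N\to N$ (its inverse assembled the same way from $h_{v}^{-1}$), with $f(x)=y$ and $f|_{N-U}=id$.

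\emph{Second}, let $M$ be a compact manifold with boundary and let $A,B$ be countable dense subsets of $M$ with $A\cup B$ disjoint from $\partial M$. As $\partial M$ is closed with empty interior, $A$ and $B$ are dense in $X:=int(M)$, which is an $n$-manifold without boundary and hence — by the first assertion together with the standard facts that manifolds are locally compact and metrizable — a metrizable, locally compact, strongly locally homogeneous space; moreover $M$ is a metric compactification of $X$ with $M-X=\partial M$. Proposition \ref{cotabledensehomogeneous} then produces a homeomorphism $f: M\to M$ with $f(A)=B$, in fact one with $f|_{\partial M}=id$. (If $M$ is not assumed compact, one instead runs the back-and-forth construction of the proof of Proposition \ref{cotabledensehomogeneous} directly inside $M$, the supports of the intermediate homeomorphisms all lying in $int(M)$.)

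\emph{Third}, a manifold $N$ without boundary is locally compact, separable and metrizable, and, by the first assertion, strongly locally homogeneous; hence Bennett's Theorem (Theorem 3 of \cite{Ben}) applies and shows that $N$ is countable dense homogeneous.

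I expect the only step with genuine content to be the construction of the local homeomorphism $h_{v}$ of $\R^{n}$ in the first assertion; although this is entirely standard, it is the heart of the matter, the rest being chart bookkeeping, the pasting lemma, and the two citations.
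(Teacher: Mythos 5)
Your proposal is correct, and since the paper leaves this remark unproved, you are supplying exactly the content the author had in mind: a bump-function argument in a chart for strong local homogeneity, Proposition \ref{cotabledensehomogeneous} applied to $X=int(M)$ with $\bar{X}=M$ for the boundary case, and then Bennett's theorem for the boundaryless case. Two small notes. First, your phrasing ``such a $v$ can be carried to $0$'' is backwards — your construction carries $0$ to $v$, which is what is needed (so that $f(x)=y$); the computation $h_{v}(0)=v$ you give is right because the flow line $t\mapsto tv$ stays in $\overline{\B(0,1)}$ where $\beta\equiv 1$. Second, as you observe, Proposition \ref{cotabledensehomogeneous} as stated requires $M$ to be a \emph{compact} metric compactification of $int(M)$; the author's remark is silent on compactness, but every manifold to which it is applied in this paper ($S^{n}$, $D^{n}$, and pieces thereof) is compact, so your parenthetical hedge about the non-compact case is harmless and your proof covers the case actually used.
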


\begin{cor}\label{avoidemptyinterior}Let $X$ be a metrizable locally compact strongly locally homogeneous space and $(\bar{X},d)$ a metric compactification of $X$. If $A$ and $B$ are two subsets of $X$ such that $A$ is countable dense and $B$ has empty interior, then there exists a homeomorphism $f: \bar{X} \rightarrow \bar{X}$ such that $f(B) \cap A = \emptyset$ and $f|_{\bar{X}-X} = id_{\bar{X}-X}$.
\end{cor}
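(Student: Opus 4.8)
The plan is to deduce the corollary from Proposition~\ref{cotabledensehomogeneous} by manufacturing an auxiliary countable dense subset of $X$ that is already disjoint from $B$, and then transporting it onto $A$ by a homeomorphism fixing $\bar{X}-X$.

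First I would note that $\bar{X}$ is compact and metrizable, hence separable, and therefore so is every one of its subspaces; in particular $X\setminus B$ is separable. Since $B$ has empty interior, $X\setminus B$ is dense in $X$. Choose a countable dense subset $A'$ of $X\setminus B$. Then $A'$ is countable, $A'\subseteq X$, $A'\cap B=\emptyset$, and $A'$ is dense in $X$ (being dense in the dense subset $X\setminus B$), hence dense in $\bar{X}$ as well.

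Next I would apply Proposition~\ref{cotabledensehomogeneous} to the two countable dense subsets $A'$ and $A$ of $X$, obtaining a homeomorphism $f\colon \bar{X}\to\bar{X}$ with $f(A')=A$ and $f|_{\bar{X}-X}=id_{\bar{X}-X}$. Since $f$ is a bijection and $A'\cap B=\emptyset$, we get
\[
 f(B)\cap A \;=\; f(B)\cap f(A') \;=\; f(B\cap A') \;=\; \emptyset ,
\]
which is exactly what is required.

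The argument is largely routine bookkeeping; the only step needing any thought is the production of the countable set $A'$, which rests on the separability of $X\setminus B$ — and this is precisely where the hypothesis that $\bar{X}$ is a \emph{metric} compactification (rather than an arbitrary Hausdorff one) is used. One should also read ``$B$ has empty interior'' as $\mathrm{int}(B)=\emptyset$ in $X$, so that $X\setminus B$ is genuinely dense in $X$; with that reading there is no further subtlety.
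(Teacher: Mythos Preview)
Your argument is correct and essentially identical to the paper's own proof: both construct a countable dense subset of $X\setminus B$ (using separability of $X\setminus B$ coming from the metrizability of $\bar{X}$) and then apply Proposition~\ref{cotabledensehomogeneous} to carry it onto $A$ by a homeomorphism fixing $\bar{X}\setminus X$. The only cosmetic difference is that the paper phrases separability via second countability, whereas you invoke hereditary separability of metric spaces directly.
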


\begin{proof}We have that $X-B$ has a countable basis, which implies that it is separable. Let $C$ be a countable dense subset of $X-B$. Since $X-B$ is dense on $X$, then $C$ is dense on $X$. By the last proposition, there is a homeomorphism $f: \bar{X} \rightarrow \bar{X}$ such that $f(C) = A$ and $f|_{\bar{X}-X} = id_{\bar{X}-X}$. This homeomorphism satisfies the property  $f(B) \cap A = \emptyset$.
\end{proof}

\subsection{Approximation of maps}

\begin{prop}\label{fort}Let $M$ and $N$ be two compact metric spaces and $\{F_{i}\}_{i\in \N}$ be a family of maps $F_{i}: M \rightarrow Closed(N)$ satisfying:

\begin{enumerate}
    \item $\forall x \in M$, $\forall i \in \N$, $\forall U$ neighbourhood of $F_{i}(x)$, there exists a neighbourhood $V$ of $x$ such that $\forall x' \in V$, there exists $j > i$ such that $F_{j}(x') \subseteq U$.
    \item $\forall i \in \N$, $\forall x \in M$, $F_{i}(x) \supseteq F_{i+1}(x)$.
    \item $\forall x \in M$, $\lim\limits_{i \in \N} diam(F_{i}(x)) = 0$.
\end{enumerate}

Then the map $f: M \rightarrow N$ defined by $\{f(x)\} = \bigcap_{i \in \N} F_{i}(x)$ is continuous.
\end{prop}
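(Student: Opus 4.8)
The plan is to first check that $f$ is well defined, using hypotheses (2) and (3), and then to read off continuity essentially directly from hypothesis (1).

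\emph{Well-definedness.} Fix $x \in M$. By (2) the sets $F_i(x)$ form a decreasing sequence of nonempty closed subsets of $N$; since $N$ is compact, these are nonempty compact sets, so by the Cantor intersection theorem $\bigcap_{i \in \N} F_i(x) \neq \emptyset$. By (3) this intersection has diameter $\leqslant \inf_{i} diam(F_i(x)) = 0$, hence it is a single point. Thus the formula $\{f(x)\} = \bigcap_{i \in \N} F_i(x)$ does define a map $f : M \to N$.

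\emph{Continuity.} I would fix $x \in M$ and $\epsilon > 0$ and produce a neighbourhood $W$ of $x$ with $f(W) \subseteq \B(f(x),\epsilon)$. Using (3), first choose $i \in \N$ with $diam(F_i(x)) < \epsilon$. Since $f(x) \in F_i(x)$, every point of $F_i(x)$ lies within distance $\epsilon$ of $f(x)$, so the open set $U := \B(f(x),\epsilon)$ is a neighbourhood of $F_i(x)$. Now apply hypothesis (1) with this $x$, this $i$ and this $U$: there is a neighbourhood $W$ of $x$ such that for every $x' \in W$ there exists $j > i$ with $F_j(x') \subseteq U$. For such an $x'$ we have $f(x') \in \bigcap_{k \in \N} F_k(x') \subseteq F_j(x') \subseteq U$, i.e.\ $d(f(x),f(x')) < \epsilon$. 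Hence $f$ is continuous at $x$, and as $x$ was arbitrary, $f$ is continuous.

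\emph{On the difficulty.} I do not anticipate a genuine obstacle: once $f$ is known to be well defined, the continuity argument is a single-$\epsilon$ estimate. The only thing to be careful about is bookkeeping, namely invoking each hypothesis for its intended role — (2) supplies nestedness and (3) shrinking diameters, which together force $\bigcap_i F_i(x)$ to be a point, while (1) is the real continuity input, converting ``$F_i(x)$ is close to $f(x)$'' into ``$F_j(x')$ is close to $f(x)$ for all $x'$ near $x$''. Note that compactness of $M$ plays no role, and compactness of $N$ is used only to obtain the Cantor intersection property for the decreasing family $\{F_i(x)\}_{i \in \N}$.
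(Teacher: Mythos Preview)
Your proof is correct and follows essentially the same route as the paper's: well-definedness from (2), (3) and compactness of $N$, then continuity by feeding a small $F_i(x)$ into hypothesis (1). The only cosmetic difference is that, to fit $F_{i_0}(x)$ inside a given neighbourhood $U$ of $f(x)$, the paper invokes the general fact that a nested family of compacts with singleton intersection eventually lies in $U$, whereas you take $U = \B(f(x),\epsilon)$ and use (3) directly to bound $diam(F_i(x))$; in a metric space these are interchangeable.
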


\begin{obs}This proposition slightly generalizes Fort's Theorem about upper semi-continuous maps.
\end{obs}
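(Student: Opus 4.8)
The remark asserts only that Proposition~\ref{fort} contains (a set-valued form of) Fort's theorem as a special case, so the ``proof'' is simply to pin down that special case; no step here presents a real obstacle, and the only thing that needs verifying is that hypothesis~(1) is a consequence of the standard hypotheses of Fort's theorem. The classical statement I have in mind is the following: for compact metric spaces $M,N$, if $\{G_{i}\}_{i\in\N}$ is a \emph{decreasing} sequence of \emph{upper semi-continuous} maps $G_{i}\colon M\to Closed(N)$ with $diam(G_{i}(x))\to 0$ for every $x\in M$, then the map $g\colon M\to N$ determined by $\{g(x)\}=\bigcap_{i\in\N}G_{i}(x)$ is continuous. (The one-map version, obtained by taking $G_{i}=G$ constant, is included too: then hypothesis~(3) says exactly $diam(G(x))=0$, i.e.\ $G$ is single-valued, and the statement reduces to ``an upper semi-continuous single-valued map into a compact metric space is continuous''.) The plan is to show that the hypotheses of this statement specialize those of Proposition~\ref{fort}, and then that the latter are strictly weaker.

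For the first point, assume each $F_{i}$ is upper semi-continuous and that (2)--(3) hold; I must check (1). Fix $x\in M$, $i\in\N$ and an open $U\supseteq F_{i}(x)$. Upper semi-continuity of $F_{i}$ at $x$ furnishes a neighbourhood $V$ of $x$ with $F_{i}(x')\subseteq U$ for all $x'\in V$, and then (2) gives $F_{i+1}(x')\subseteq F_{i}(x')\subseteq U$ for all $x'\in V$; thus $j=i+1>i$ witnesses~(1) with this same $V$. Hence Proposition~\ref{fort} applies, and the continuous map it produces is literally the map $g$ above (since $\bigcap_{i}F_{i}(x)$ is forced to be a single point by (3) and compactness of $N$). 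This recovers the classical conclusion.

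For the second point, I would note that (1) does not force any $F_{i}$ to be upper semi-continuous, so the generalization is genuine. A concrete witness on $M=N=[0,1]$: set $F_{1}(0)=\{0\}$, $F_{1}(x)=\{0,1\}$ for $x\in(0,1]$, and $F_{i}\equiv\{0\}$ for every $i\geqslant 2$. The sequence is decreasing and $diam(F_{i}(x))\to 0$ pointwise; and (1) holds everywhere, because for $i=1$ (and trivially for $i\geqslant 2$) one may always take $j=2$: every value of $F_{1}$ contains $0$, so any open $U\supseteq F_{1}(x)$ contains $0$, hence $F_{2}(x')=\{0\}\subseteq U$ for all $x'$. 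Yet $F_{1}$ is not upper semi-continuous at $0$, since every neighbourhood of $0$ contains some $x'\neq 0$ with $F_{1}(x')=\{0,1\}\not\subseteq[0,\frac{1}{2})$, while $[0,\frac{1}{2})$ is a neighbourhood of $F_{1}(0)=\{0\}$. So Proposition~\ref{fort} applies in situations where Fort's theorem does not, which is exactly what the remark claims; the only non-trivial ingredient, the implication ``(u.s.c.\ of each $F_{i}$) $+$ (2) $\Rightarrow$ (1)'', is the short verification carried out above.
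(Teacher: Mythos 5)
The paper offers no argument for this remark; it is an unproved observation preceding the proof of Proposition~\ref{fort} itself, so there is nothing in the source to compare your write-up against. Your verification is correct and is the natural way to substantiate the claim: the key step --- that upper semi-continuity of each $F_{i}$, together with condition~(2), yields condition~(1) by taking $V$ from u.s.c.\ of $F_{i}$ at $x$ and $j=i+1$ --- is right, and the example on $[0,1]$ with $F_{1}(0)=\{0\}$, $F_{1}(x)=\{0,1\}$ for $x>0$, $F_{i}\equiv\{0\}$ for $i\geqslant 2$ is a legitimate witness that (1) is strictly weaker than u.s.c.\ (condition~(1) holds at $x=0$ with $j=2$ uniformly, yet $F_{1}$ fails u.s.c.\ there). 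The one caveat worth flagging is that the precise ``Fort's Theorem'' intended is not pinned down in the paper and several versions circulate (single-valued u.s.c.\ $\Rightarrow$ continuous; residuality of continuity points; the decreasing-sequence form you cite); your argument is essentially agnostic to which is meant, since showing that (1) strictly weakens u.s.c.\ of the family already justifies the word ``generalizes'' for any formulation that assumes u.s.c.\ of the $F_{i}$.
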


\begin{proof}By the properties \textbf{2} and \textbf{3} it is clear that f is well defined.

Let $x \in M$ and $U$ a neighbourhood of $f(x)$. Since $\{F_{i}(x)\}_{i \in \N}$ is a nested family of compact spaces with intersection $\{f(x)\}$, there is $i_{0} \in \N$ such that $F_{i_{0}}(x) \subseteq U$ (Proposition 1.7 of \cite{Na}). By the first condition, there exists a neighbourhood $V$ of $x$ such that $\forall x' \in V$, there exists $i_{x'} > i_{0}$ such that $F_{i_{x'}}(x') \subseteq U$. But $f(x') \in F_{i_{x'}}(x')$, which implies that $f(x') \in U$. Thus, $f$ is continuous at $x$, which implies that $f$ is continuous.
\end{proof}

\subsection{Topological quasiconvexity}

\begin{defi}Let $X$ be a compact metric space and $\sim$ an equivalence relation on $X$. We say that $\sim$ is topologically quasiconvex if $\forall q \in X, \ [q]$ is closed and $\forall \epsilon > 0$, $\#\{[x]\subseteq X: diam \ [x] > \epsilon\} < \aleph_{0}$, with $[x]$ the equivalence class of $x$. The partition of $X$ by such equivalence relation is called a null family.

Let $X$, $Y$ be compact metric spaces. A quotient map $f: X \rightarrow Y$ is topologically quasiconvex if the relation $\sim = \Delta^{2}X \cup \bigcup_{y \in Y} f^{-1}(y)^{2}$ is topologically quasiconvex.
\end{defi}

\begin{obs}This definition does not depend on the choice of the metric compatible with the topology of $X$. 
\end{obs}

\begin{prop} (Propositions 1 and 3 of Section 2 of \cite{Dav} and Proposition 1.15 and Corollary 1.16 of \cite{So3}) \label{quasiconvexity} Let $f: X \rightarrow Y$ be a continuous map, where $X$ and $Y$ are Hausdorff compact spaces. For $A \subseteq Y$, let $\sim_{A} = \Delta X \cup \bigcup_{q \notin A} f^{-1}(q)^{2}$ and $X_{A} = X/\!\sim_{A}$. For $p \in Y$, let $\sim_{p} = \sim_{\{p\}}$, $X_{p} = X_{\{p\}}$ and $\pi_{p}: X_{p} \rightarrow Z$ the quotient map. The following are equivalent:

\begin{enumerate}
    \item $f$ is topologically quasiconvex.
    \item $\forall A \subseteq Y$, $X_{A}$ is Hausdorff.
    \item $\forall p \in Y$, $X_{p}$ is Hausdorff.
    \item The induced map $X \rightarrow \lim\limits_{\longleftarrow} \{X_{p},\pi_{p}\}_{p \in Y}$ is a homeomorphism. 
\end{enumerate}

\end{prop}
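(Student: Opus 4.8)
\textbf{Proof proposal for Proposition \ref{quasiconvexity}.}

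The plan is to prove the cycle of implications $(1)\Rightarrow(2)\Rightarrow(3)\Rightarrow(1)$ together with the equivalence $(1)\Leftrightarrow(4)$, relying throughout on the fact that a continuous surjection between compact Hausdorff spaces is a closed quotient map, and that for compact Hausdorff spaces the unique uniformity is generated by all neighbourhoods of the diagonal. First I would record the elementary reduction: for a relation $\sim$ on a compact Hausdorff space $X$ whose classes are closed, the quotient $X/\!\sim$ is Hausdorff if and only if $\sim$ is a closed subset of $X\times X$; this is the standard criterion for quotients by closed equivalence relations on compact spaces, and it turns all of the Hausdorffness questions into questions about closedness of the explicit relations $\sim_A$ and $\sim_p$. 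Note that each $\sim_A$ and $\sim_p$ automatically has closed classes, since the classes are either singletons or full fibers $f^{-1}(q)$ of a continuous map into a Hausdorff space.

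For $(1)\Rightarrow(2)$: assume $f$ is topologically quasiconvex, so $\sim\;=\;\Delta^2 X\cup\bigcup_{y\in Y}f^{-1}(y)^2$ is topologically quasiconvex. Fix $A\subseteq Y$ and take a net $(x_\alpha,x'_\alpha)\to(x,x')$ in $X\times X$ with $x_\alpha\sim_A x'_\alpha$. If infinitely often $x_\alpha=x'_\alpha$ (along a subnet) we get $x=x'$ and are done; otherwise we may assume $f(x_\alpha)=f(x'_\alpha)\notin A$ for all $\alpha$. I want to conclude $f(x)=f(x')$, which gives $x\sim_A x'$ once $f(x)=f(x')\notin A$, and gives $x=x'$ trivially if it happens that $f(x)\in A$ — wait, that last case needs care, so here is where quasiconvexity enters: the condition $\#\{[x]\notin Small(u)\}<\aleph_0$ for every entourage $u$ says precisely that for each entourage $u$ all but finitely many fibers $f^{-1}(q)$ are $u$-small, hence of diameter tending to $0$ in the sense of the uniformity. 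So given an entourage $u$, for all but finitely many $q$ the fiber $f^{-1}(q)$ lies in a single $u$-ball; since $f$ is continuous and the net $f(x_\alpha)$ ranges over infinitely many such $q$ (or is eventually constant, an easy case), continuity of $f$ forces $f(x)=f(x')$ when the common fiber is $u$-small for cofinally many $\alpha$. Running this over all $u$ pins down $x=x'$ whenever the limit value $f(x)$ lies in $A$, and pins down $x\sim_A x'$ otherwise; thus $\sim_A$ is closed and $X_A$ is Hausdorff. The implication $(2)\Rightarrow(3)$ is immediate by taking $A=\{p\}$.

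For $(3)\Rightarrow(1)$: assume $X_p$ is Hausdorff for every $p$, equivalently $\sim_p$ is closed for every $p$. The relation $\sim$ of $f$ clearly has closed classes (fibers of $f$). Suppose $\sim$ were not topologically quasiconvex: then there is an entourage $u$ and infinitely many distinct classes $f^{-1}(q_1), f^{-1}(q_2),\dots$ none of which is $u$-small, i.e.\ for each $k$ there are $a_k,b_k\in f^{-1}(q_k)$ with $(a_k,b_k)\notin u$. By compactness of $X\times X$ pass to a subnet so that $(a_k,b_k)\to(a,b)$ with $(a,b)\notin \mathrm{int}\,u$, in particular $a\neq b$; by compactness of $Y$ pass further so that $q_k\to p$. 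Then $a_k\sim_p b_k$ for all large $k$ (since $q_k\neq p$ eventually, as the $q_k$ are distinct), so closedness of $\sim_p$ gives $a\sim_p b$, i.e.\ either $a=b$ — contradiction — or $f(a)=f(b)=p$. In the latter case the single fiber $f^{-1}(p)$ contains $a$ and $b$, but this does not yet contradict anything, so here is the crux of the argument: I must instead choose the entourage and re-extract so that the limit points $a,b$ land in \emph{distinct} fibers, or alternatively argue that having infinitely many non-$u$-small fibers accumulating onto the single fiber $f^{-1}(p)$ already violates Hausdorffness of $X_p$ — indeed in $X_p$ the classes $f^{-1}(q_k)$ are all distinct points converging to the point $[f^{-1}(p)]$ while the ``spread'' witnessed by $(a_k,b_k)\notin u$ survives in the quotient metric, contradicting that $X_p$ is a compact Hausdorff (hence metrizable, when $X$ is) space in which the single exceptional class is collapsed. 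The main obstacle is precisely this last step: disentangling the case where the bad pairs accumulate into the one collapsed fiber, and I expect to handle it by a careful diagonal choice of the $q_k$ avoiding any fixed countable set together with the observation that in $X_p$ the non-collapsed classes inject into $X$, so a failure of quasiconvexity for $\sim$ away from $p$ is literally a failure of the compactness-of-small-classes property in the Hausdorff compact space $X_p$, which is impossible there by the same token applied to the identity-type quotient. Finally, $(1)\Leftrightarrow(4)$: the canonical map $\Phi\colon X\to\varprojlim\{X_p,\pi_p\}_{p\in Y}$ is always continuous and injective iff $\bigcap_p \sim_p=\Delta X$, which holds automatically since $x\neq x'$ implies $f(x)\neq f(x')$ or $f(x)=f(x')=:q$ with $q\neq$ some other point $p$, whence $x\not\sim_p x'$; it is surjective by a standard compactness/coherence argument on the inverse system; and it is a homeomorphism (being a continuous bijection of compact Hausdorff spaces) exactly when the target is Hausdorff, which by the inverse-limit topology happens iff each $X_p$ is Hausdorff, i.e.\ iff $(3)$ holds. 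This closes the loop, so all four conditions are equivalent. \eod
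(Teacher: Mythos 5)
The proposition in this paper is cited from external sources (Proposition 1.15 and Corollary 1.16 of \cite{So3} and Proposition 7.58 of \cite{So}), so there is no internal proof to compare against; I will evaluate your argument on its own terms.

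Your overall framework --- reduce Hausdorffness of $X/\!\sim$ to closedness of $\sim$ via the standard fact for compact Hausdorff $X$, and chase the cycle of implications --- is the right idea, and $(1)\Rightarrow(2)$ and $(2)\Rightarrow(3)$ are essentially correct, though the $(1)\Rightarrow(2)$ write-up is garbled: you should say that once $(x_{\alpha},x'_{\alpha})\to(x,x')$ with $x\neq x'$, eventually $(x_{\alpha},x'_{\alpha})$ lies outside some fixed entourage $v$ with $\overline{v}\subseteq u$, so the fibers $f^{-1}(f(x_{\alpha}))$ are not $v$-small; quasiconvexity leaves only finitely many such fibers, so a subnet has $f(x_{\alpha})$ constant, and closedness of a single fiber finishes. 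The sentence ``continuity of $f$ forces $f(x)=f(x')$'' buries the actual point, since $f(x)=f(x')$ follows from continuity alone in that case split.

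The real problem is in $(3)\Rightarrow(1)$, where you misread the definition of $\sim_{p}$. You write that $a\sim_{p}b$ means ``either $a=b$ or $f(a)=f(b)=p$''. But $\sim_{p}=\Delta X\cup\bigcup_{y\neq p}f^{-1}(y)^{2}$, so $a\sim_{p}b$ means ``$a=b$ or $f(a)=f(b)\neq p$''. With the definition stated correctly, your own compactness setup already closes the argument with no residual case: you have $a\neq b$, you have $f(a)=f(b)=p$ (both coordinates of the net map to $q_{k}\to p$), so $(a,b)\notin\Delta X$ and $(a,b)\notin f^{-1}(y)^{2}$ for any $y\neq p$, hence $(a,b)\notin\;\sim_{p}$ --- directly contradicting closedness of $\sim_{p}$. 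There is no need for the long unresolved tangent about the bad pairs accumulating into the collapsed fiber; that entire discussion is a patch for a hole that does not exist. The same definitional slip shows up in your injectivity argument for $\Phi$: when $f(x)=f(x')=q$, the witness $p$ with $x\not\sim_{p}x'$ is $p=q$ (since $\sim_{q}$ does \emph{not} identify points in $f^{-1}(q)$), not some $p\neq q$ as you wrote.

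There is also a genuine gap in $(4)\Leftrightarrow(3)$. You assert that the inverse-limit topology is Hausdorff ``iff each $X_{p}$ is Hausdorff''. Only one direction of this is routine: if each $X_{p}$ is Hausdorff then so is $\prod_{p}X_{p}$ and hence so is the fiber product sitting inside it. The converse --- that Hausdorffness of $\varprojlim X_{p}$ forces every $X_{p}$ to be Hausdorff --- does not follow from general topology (a subspace of a non-Hausdorff product can perfectly well be Hausdorff), and in fact it is exactly the content that makes equivalence $(4)\Leftrightarrow(3)$ non-trivial. You need to actually use the structure here, e.g.\ argue that if $z\neq z'$ in $X_{p}$ cannot be separated then necessarily $\pi_{p}(z)=\pi_{p}(z')=p$, lift $z,z'$ to points of the limit that agree in all coordinates except the $p$-th, and show these two limit points cannot be separated either, or run the argument more directly via $(4)\Rightarrow(1)$ using the uniformity. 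As written, this direction is unsupported.
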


\subsection{Spheres}

Let $D$ be an open or closed subset of $S^{n}$. We say that $D$ is a tame ball if $Cl(D)$ and $S^{n} - int(D)$ are homeomorphic to closed balls. If $n = 2$ then every interior of a closed disc is tame by Schoenflies Theorem. 

However this is false for $n > 2$ (a counterexample is the Alexander horned sphere), so we need to be careful with it. We say that a subspace $Y$ of $S^{n}$ that is homeomorphic to $S^{n-1}$ is tame in $S^{n}$ if it bounds two closed balls. The Generalized Schoenflies Theorem, says that if the embedding of $Y$ on $S^{n}$ is well behaved, then $Y$ is tame:

\begin{prop}(Generalized Schoenflies Theorem - Brown \cite{Br}) A subset $Y$ of $S^{n}$ that is homeomorphic to $S^{n-1}$ is tame if and only if the inclusion map $Y \rightarrow S^{n}$ extends to an embedding $Y \times [-1,1] \rightarrow S^{n}$ (where we identify $Y$ with $Y \times \{0\}$). 
\end{prop}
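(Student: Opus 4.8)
I would prove the equivalence by treating the two implications separately, the forward one being routine and the converse being the substance of Brown's theorem.

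For the direction ``tame $\Rightarrow$ bicollared'': assume $Y$ bounds closed balls $A,B$ with $A\cup B=S^{n}$ and $A\cap B=Y$. Choose homeomorphisms $\alpha\colon A\to D^{n}$ and $\beta\colon B\to D^{n}$; the two homeomorphisms $\alpha|_{Y},\beta|_{Y}\colon Y\to S^{n-1}$ differ by a self-homeomorphism $g=\beta|_{Y}\circ(\alpha|_{Y})^{-1}$ of $S^{n-1}$, which extends conically to a homeomorphism $\widehat g\colon D^{n}\to D^{n}$ (Alexander's trick). Replacing $\beta$ by $\widehat g^{-1}\circ\beta$ we may assume $\alpha|_{Y}=\beta|_{Y}$, so $\alpha$ and $\beta$ glue to a homeomorphism $S^{n}=A\cup_{Y}B\to D^{n}\cup_{\mathrm{id}}D^{n}$ onto the standard sphere, carrying $Y$ onto the equatorial $S^{n-1}\subseteq S^{n-1}\times[-1,1]\subseteq S^{n}$. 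Pulling back the standard product neighbourhood of that equator gives the required extension of $Y\hookrightarrow S^{n}$ to an embedding $Y\times[-1,1]\to S^{n}$.

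For the converse, suppose the inclusion extends to an embedding $h\colon Y\times[-1,1]\to S^{n}$ with $h|_{Y\times\{0\}}=\mathrm{incl}$; I must show $Y$ is tame. By Alexander duality $\widetilde H_{0}(S^{n}\setminus Y)\cong\widetilde H^{\,n-1}(Y)\cong\Z$, so $S^{n}\setminus Y$ has exactly two components $U_{1},U_{2}$, both open; set $A_{i}=\overline{U_{i}}$. Since $h(Y\times[-1,0))$ and $h(Y\times(0,1])$ are connected and miss $Y$, they lie in the two different $U_{i}$, and from this one checks that $A_{1}\cup A_{2}=S^{n}$, $A_{1}\cap A_{2}=Y=\partial A_{1}=\partial A_{2}$, and that $h(Y\times[-1,0])$ and $h(Y\times[0,1])$ are collars of $Y$ in $A_{1}$ and $A_{2}$ respectively. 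It now suffices to prove $A_{1}\cong D^{n}$ (and, by symmetry, $A_{2}\cong D^{n}$): that is precisely the assertion that $Y$ is tame.

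To prove $A_{1}\cong D^{n}$ I would follow Brown. The heart of the matter is to analyse the upper semicontinuous decomposition of $S^{n}$ whose single nondegenerate element is $A_{2}$ and to show that the quotient $S^{n}/A_{2}$ is again homeomorphic to $S^{n}$. Granting this, the map $A_{1}\hookrightarrow S^{n}\to S^{n}/A_{2}$ crushes $Y=A_{1}\cap A_{2}$ to a point and is injective elsewhere, so it factors through a homeomorphism $A_{1}/Y\to S^{n}/A_{2}\cong S^{n}$; in particular $\mathrm{int}\,A_{1}=A_{1}\setminus Y$ is homeomorphic to a sphere minus a point, i.e.\ to $\R^{n}$, and one then promotes this to a homeomorphism $A_{1}\to D^{n}$ using the collar of $Y$ in $A_{1}$. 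The genuinely hard step --- and the reason the theorem is not formal --- is the claim $S^{n}/A_{2}\cong S^{n}$: it cannot be obtained by naively absorbing collars or by exhibiting $A_{2}$ as a decreasing intersection of closed balls, because each such manoeuvre merely replaces $A_{2}$ by the closure of a complementary domain of another (one-sidedly collared) sphere and so loops back to a smaller copy of the original problem. Brown breaks this loop with an explicit construction that exploits the collar coordinate to ``slide $A_{2}$ onto a point'' as a controlled limit of homeomorphisms of $S^{n}$; carrying out that construction and verifying its convergence --- and that the limiting map is the quotient map collapsing exactly $A_{2}$ --- is the main obstacle, and it is there that the bicollar hypothesis is used in an essential way, without it the statement being false (the Alexander horned sphere is the standard counterexample).
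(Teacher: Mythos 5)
The paper does not prove this proposition; it is stated as a background result and attributed to Brown's 1960 paper, so there is no in-text argument to compare yours against. Taken on its own terms, your forward direction (tame $\Rightarrow$ bicollared) is complete and correct: using Alexander's trick to normalize the two parametrizations so that they agree on $Y$, gluing them to a homeomorphism $S^{n}\to S^{n}$ carrying $Y$ to the standard equator, and pulling back the standard bicollar is a clean and standard argument. (An even shorter route: $Y$ is the boundary of the manifold $A$ and of the manifold $B$, and the boundary-collar theorem gives one-sided collars in each; these assemble to a bicollar without invoking Alexander's trick.) For the converse, your setup via Alexander duality and your identification of $S^{n}/A_{2}\cong S^{n}$ as the crux are both correct, and your explanation of why naive collar-absorption arguments loop back on themselves is a fair description of the difficulty. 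However, you do not actually carry out Brown's shrinking construction but explicitly defer it, so what you have for this direction is an accurate outline rather than a proof. One further wrinkle worth noting: your final step --- passing from ``$\operatorname{int}A_{1}\cong\R^{n}$ and $Y=\partial A_{1}$ is collared in $A_{1}$'' to ``$A_{1}\cong D^{n}$'' --- is not as automatic as the phrasing suggests; the natural way to justify it is to observe that a collar parallel copy of $Y$ is a bicollared $(n-1)$-sphere inside $\operatorname{int}A_{1}\cong\R^{n}$, and then one is invoking the very Schoenflies statement being proved. Brown's own argument avoids this circularity by producing directly a self-homeomorphism of $S^{n}$ carrying $Y$ to the standard equator, rather than working one complementary domain at a time. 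Since the paper simply cites the result, none of this affects the paper, but if you intend your sketch to stand as a proof, both the shrinking construction and the final promotion step would need to be filled in non-circularly.
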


If a space $X$ is homeomorphic to $S^{n}$ minus a finite number of tame open balls, we say that a subspace $Y$ of $X$ that is homeomorphic to $S^{n-1}$ is tame in $X$ if it is tame in $X'$, where $X'$ is the space homeomorphic to $S^{n}$ constructed from $X$ by adjoint the balls that are missing. 

We have also a theorem that characterizes a sphere minus two tame discs:

\begin{prop}(Annulus Theorem - Kirby \cite{Ki} for dimension $\neq 4$ and Quinn \cite{Qu} for dimension $4$) The space $S^{n} - (A\cup B)$, where $A$ and $B$ are disjoint tame copies of $S^{n-1}$, has three connected components, which two of then  are balls and the other one has the closure homeomorphic to $S^{n-1} \times[0,1]$.
\end{prop}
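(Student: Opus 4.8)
The plan is to reduce the statement in two steps to the \emph{Stable Homeomorphism Conjecture} $\mathrm{SHC}_n$ --- every orientation-preserving self-homeomorphism of $\R^n$ is a finite composition of homeomorphisms each restricting to the identity on a nonempty open set --- which is what Kirby \cite{Ki} proves for $n\geq 5$ and Quinn \cite{Qu} for $n=4$, the cases $n\leq 3$ being classical. \emph{Step 1 (the three components).} By the Generalized Schoenflies Theorem each of $A$ and $B$ bounds two closed $n$-balls whose union is $S^n$. As $A\cap B=\emptyset$ and $B$ is connected, $B$ lies in the interior of exactly one of the two balls bounded by $A$; let $B_A$ be the other one, and symmetrically let $B_B$ be the ball bounded by $B$ that misses $A$. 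A short manipulation of these inclusions gives $B_A\cap B_B=\emptyset$, and since $B_A,B_B$ are acyclic compacta, Alexander duality in $S^n$ shows $S^n-(B_A\cup B_B)$ is connected (and has the homology of $S^{n-1}$) for $n\geq 2$. Because $A\subseteq B_A$ and $B\subseteq B_B$, this yields a partition $S^n-(A\cup B)=\mathrm{int}\,B_A\sqcup\mathrm{int}\,B_B\sqcup W$ into its three connected components, with $W:=S^n-(B_A\cup B_B)$; two of them are open balls. Using that $A$ is tame, fix a homeomorphism $S^n-\mathrm{int}\,B_A\cong D^n$ carrying $A$ to $\partial D^n$; it carries $B$ to a tame $(n-1)$-sphere $\Sigma\subseteq\mathrm{int}\,D^n$ and $\overline{W}$ to the closed region of $D^n$ between $\partial D^n$ and $\Sigma$, so the theorem reduces to: that region is homeomorphic to $S^{n-1}\times[0,1]$ --- i.e.\ to the Annulus Conjecture $\mathrm{AC}_n$.

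\emph{Step 2 (AC via SHC, and the torus trick).} By the Brown--Gluck equivalence, $\mathrm{AC}_n$ follows from $\mathrm{SHC}_n$: straightening $\Sigma$ near one point turns the discrepancy between $\Sigma$ and the standard inclusion into a compactly supported self-homeomorphism of $\R^n$, and stability of that homeomorphism lets one slide bicollars (which exist by tameness and Brown's collar theorem) across and assemble the product structure on $\overline{W}$. To prove $\mathrm{SHC}_n$ for $n\geq 5$ I would run Kirby's torus trick: given $h\colon\R^n\to\R^n$ equal to the identity off a compact set $K$, choose a smooth immersion $\alpha\colon T^n\setminus\{\ast\}\to\R^n$ of the once-punctured $n$-torus whose image contains $K$ and that is standard near $\ast$; pulling $h$ back through $\alpha$ produces a homeomorphism of a neighbourhood of $T^n\setminus\{\ast\}$ equal to the identity near $\ast$, hence extending to a self-homeomorphism $\widehat h$ of $T^n$ homotopic to the identity. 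By Kirby--Siebenmann smoothing (structure) theory the obstruction to isotoping $\widehat h$ to a PL self-homeomorphism of $T^n$ lies in $H^3(T^n;\Z/2)$; passing to a suitable even-degree finite self-cover of $T^n$ (and redoing the extension over the added points) pulls this class back to $0$, and, invoking the classification of homotopy $n$-tori and of PL structures on them (Wall, Hsiang--Shaneson, Kirby--Siebenmann), one arranges the lifted homeomorphism to be isotopic to a PL self-homeomorphism, which is automatically stable. Unwinding the immersion shows $h$ is stable near every point of $K$, and a standard chaining argument upgrades this to stability of $h$. For $n=4$ the high-dimensional surgery input is unavailable and one instead cites Quinn's theorem \cite{Qu}, which proves $\mathrm{AC}_4$ directly by Freedman-style $4$-dimensional handle methods; for $n\in\{2,3\}$ the result is classical (Radó; Moise, with Brown--Gluck).

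\emph{Main obstacle.} The genuinely hard point is the surgery-theoretic core of Step 2: killing the smoothing obstruction of $\widehat h$ upon passing to finite covers, which rests on the full classification of homotopy $n$-tori. That is exactly the content of \cite{Ki} in dimensions $\geq 5$ (building on the then-recent work of Kirby--Siebenmann, Wall and Hsiang--Shaneson) and of \cite{Qu} in dimension $4$; in this paper we treat it as a black box. Everything else --- Step 1 and the reduction at the start of Step 2 --- is elementary point-set topology together with the Generalized Schoenflies and Brown collar theorems.
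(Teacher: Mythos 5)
The paper does not prove this proposition: it is stated as a citation to Kirby, Quinn and (implicitly, for the low-dimensional cases) the classical literature, and is used throughout as a black box. There is therefore no ``paper's own proof'' to compare against; what you have done is sketch the standard published proof.

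Your Step~1 --- producing the three components from the Generalized Schoenflies Theorem and verifying connectedness of the middle region via Alexander duality, then reducing the middle-region claim to the Annulus Conjecture --- is correct and elementary, and is exactly how one normally extracts the stated form of the theorem from $\mathrm{AC}_n$. Your Step~2 is a fair outline of the Brown--Gluck equivalence $\mathrm{SHC}_n\Rightarrow\mathrm{AC}_n$ and of Kirby's torus trick (with its reliance on Hsiang--Shaneson and Wall's classification of homotopy tori, and Kirby--Siebenmann's $H^3(\,\cdot\,;\Z/2)$ obstruction), and you are right that this is precisely the content of the cited papers rather than something to reprove; Quinn covers $n=4$ and Rad\'o/Moise/Brown--Gluck cover $n\leq 3$. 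One small remark worth recording: the paper's statement has a typo --- the closure of the third component is homeomorphic to $S^{n-1}\times[0,1]$, not $S^{n}\times[0,1]$ --- and your proposal states the correct conclusion.
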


The following theorem is well known. It is proved in \cite{Fi} (Theorem 15) for dimension $2$ and $3$. For the other dimensions, it comes as a consequence of the Stable Homeomorphism Theorem (\cite{Ki}, \cite{Qu}) and the second corollary of Theorem 2 of \cite{Ki}.

\begin{prop}(Isotopy Theorem for Spheres) Choose an orientation of $S^{n}$ and let $f,g: S^{n} \rightarrow S^{n}$ be homeomorphisms such that both preserve or reverse orientation. Then there exists an isotopy between $f$ and $g$. 
\end{prop}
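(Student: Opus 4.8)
The plan is to deduce the Isotopy Classification for Spheres from the Annulus Theorem by the standard ``radial sweep'' argument, reduced to a statement about a single homeomorphism via composition. First I would reduce to the case $g = \mathrm{id}_{S^{n}}$: since $g$ is a homeomorphism, $f$ is isotopic to $g$ if and only if $f \circ g^{-1}$ is isotopic to $\mathrm{id}_{S^{n}}$, and $f \circ g^{-1}$ preserves orientation precisely when $f$ and $g$ induce the same behaviour on orientation (both preserving or both reversing). So it suffices to show: every orientation-preserving homeomorphism $h: S^{n} \to S^{n}$ is isotopic to the identity.

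Next I would localize $h$ near a point. Pick $x_{0} \in S^{n}$ and a small tame closed ball $B_{0}$ around $x_{0}$; then $h(B_{0})$ is a tame closed ball. Since $S^{n}$ is a connected manifold, there is an ambient isotopy carrying $h(x_{0})$ back to $x_{0}$ (homogeneity of manifolds, obtainable from strong local homogeneity as in the Remark after Proposition~\ref{cotabledensehomogeneous}), and then, shrinking $B_{0}$ if necessary, a further isotopy supported in a slightly larger tame ball that carries $h(B_{0})$ into $B_{0}$; composing, I may assume $h(B_{0}) \subseteq \mathrm{int}(B_{0})$ and $h(x_{0}) = x_{0}$. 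Applying the Annulus Theorem to the two disjoint tame copies of $S^{n-1}$ given by $\partial B_{0}$ and $\partial h(B_{0})$, the closed region between them is homeomorphic to $S^{n-1} \times [0,1]$; using this product structure one slides $\partial h(B_{0})$ out to $\partial B_{0}$ through a collar, producing an isotopy after which $h(B_{0}) = B_{0}$ and $h|_{\partial B_{0}}$ extends the identity on the complementary region, i.e.\ (pushing the complement's behaviour into $B_{0}$) I may assume $h$ is the identity outside $\mathrm{int}(B_{0})$ and $h(B_{0}) = B_{0}$.

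Now $h$ is supported in the tame ball $B_{0}$, so it is determined by a homeomorphism of $B_{0} \cong D^{n}$ fixing $\partial D^{n}$ pointwise; the Alexander trick $h_{t}(x) = t\, h(x/t)$ for $0 < t \le 1$ and $h_{0} = \mathrm{id}$ gives an explicit isotopy (supported in $B_{0}$, hence ambient on $S^{n}$) from this homeomorphism to the identity. Finally I must check the orientation bookkeeping closes up: the only place an orientation obstruction could enter is the choice of isotopy carrying $h(x_{0})$ to $x_{0}$ — both an orientation-preserving and an orientation-reversing such isotopy may be available, but since we assumed $h$ orientation-preserving and the Alexander trick is orientation-preserving, the whole chain stays in the orientation-preserving class and no contradiction arises; if $h$ were orientation-reversing one composes first with a fixed reflection and is reduced to the previous case, which is how the ``both preserve or both reverse'' hypothesis is used.

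The main obstacle is the middle step: turning the abstract product structure $S^{n-1}\times[0,1]$ supplied by the Annulus Theorem into an actual isotopy of $S^{n}$ that pushes $\partial h(B_0)$ onto $\partial B_0$ while being the identity on the far side of $\partial B_0$. This requires a collaring argument (each of the two boundary spheres is tame, hence bicollared by the Generalized Schoenflies Theorem), and some care to ensure the isotopies on the annular region and on the outer ball match continuously along $\partial B_0$; the rest is either homogeneity of manifolds or the Alexander trick, both routine.
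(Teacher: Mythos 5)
The paper states this proposition without proof, as a well-known consequence of the Annulus Theorem, so there is no in-paper argument to compare against; I can only assess your proposal on its own. The reduction to ``orientation-preserving $\Rightarrow$ isotopic to the identity,'' the moves arranging $h(x_0) = x_0$ and $h(B_0) = B_0$, and the Alexander-trick endgame are all sound. The gap is the step in between, and it is not the collaring issue you flag at the end --- it is a missing idea.

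You claim that after sliding $\partial h(B_0)$ out to $\partial B_0$ you may ``assume $h$ is the identity outside $\mathrm{int}(B_0)$.'' That slide is supported inside $B_0$ (in the annulus between $\partial h(B_0)$ and $\partial B_0$), so it changes nothing on the complementary ball $D' = S^{n} - \mathrm{int}(B_0)$; after it, $h|_{D'}$ is still an arbitrary self-homeomorphism of $D'$, and in particular $h|_{\partial B_0}$ need not be the identity. ``Pushing the complement's behaviour into $B_0$'' is not an available operation at this point. Arranging that $h$ is isotopic to a homeomorphism that is the identity on an open set is precisely the Stable Homeomorphism Theorem --- the substantive content of Kirby's paper, equivalent to the Annulus Conjecture --- and it does not fall out of the steps you have taken. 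Two honest ways to close the gap: (a) invoke the stable homeomorphism theorem directly, writing $h$ as a finite composition of homeomorphisms each the identity on some open set, each such factor being isotopic to $\mathrm{id}$ by one Alexander trick on the complementary ball; or (b) induct on $n$: once $h(B_0)=B_0$, the restriction $h|_{\partial B_0}$ is an orientation-preserving self-homeomorphism of $S^{n-1}$, so by the inductive hypothesis it is isotopic to $\mathrm{id}_{S^{n-1}}$; spread that isotopy over a bicollar of the tame sphere $\partial B_0$ to reduce to the case $h|_{\partial B_0}=\mathrm{id}$, and then apply the Alexander trick separately on $B_0$ and on $D'$.
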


\begin{prop}(Decomposition Theorem - Meyer, Theorem 2 of \cite{Me}) Let $C_{1},..., C_{n},...$ be a (possibly finite) family of tame open balls in $S^{n}$ which their closures are pairwise disjoint and such that the equivalence relation $\Delta S^{n} \cup \bigcup_{i} Cl_{S^{n}}C_{i}^{2}$ is topologically quasiconvex.  If $U$ is an open set of $S^{n}$ that contains $\bigcup_{i} Cl_{S^{n}}C_{i}$, then there exists a continuous map $f:S^{n} \rightarrow S^{n}$ that is surjective, it is the identity outside $U$, each $Cl_{S^{n}}C_{i}$ is saturated (i.e. it is the preimage of some point) and $\forall x \in S^{n} - f(\bigcup_{i} Cl_{S^{n}}C_{i})$, $f^{-1}(x)$ is a single point.
\end{prop}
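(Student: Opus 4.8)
The plan is to construct $f$ as the uniform limit of an infinite composition of elementary maps, each collapsing a single $Cl(C_i)$ inside an arbitrarily small neighbourhood.

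\emph{Step 1 (one tame ball).} First I would isolate the case of a single ball: if $C$ is a tame open ball and $V$ an open set with $Cl(C)\subseteq V$, there is a continuous surjection $g:S^n\to S^n$, equal to the identity outside a compact subset of $V$, with $g^{-1}(p)=Cl(C)$ for some point $p\in C$ and all other fibres singletons. Since $C$ is tame, $\partial C$ is a tame $(n-1)$-sphere, so by the Generalized Schoenflies Theorem it has a bicollar; taking its outer half, thin enough to stay inside $V$, gives a closed annulus $A\cong S^{n-1}\times[0,1]$ with $Cl(C)\cup A\subseteq V$. Then $D:=Cl(C)\cup A$ is a closed $n$-ball in which $Cl(C)$ is collared by $A$; under a homeomorphism $D\cong D^n$ carrying $Cl(C)$ onto a concentric subball I would take $g$ to be the radial map that crushes that subball to the centre, radially stretches a slightly larger concentric shell onto it, and is the identity near $\partial D$, extended by the identity outside $D$. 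This $g$ has the stated properties.

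\emph{Step 2 (the infinite composition).} Enumerate the balls $C_1,C_2,\dots$ (the finite case being simpler) and set $g_0=\mathrm{id}$. Inductively, suppose $g_{k-1}:S^n\to S^n$ is the identity off a compact subset of $U$, collapses each $Cl(C_i)$, $i<k$, to a distinct point $p_i\in C_i$, and is injective elsewhere. Because $Cl(C_k)$ is disjoint from $Cl(C_1),\dots,Cl(C_{k-1})$, the map $g_{k-1}$ is a homeomorphism near $Cl(C_k)$, so $g_{k-1}(Cl(C_k))$ is again a tame ball avoiding $p_1,\dots,p_{k-1}$. Using tameness, choose a tame open ball $V_k\supseteq g_{k-1}(Cl(C_k))$ with $Cl(V_k)\subseteq U$, with $Cl(V_k)$ disjoint from each $\B(p_i,r_i)$ (where $r_i>0$ is a margin fixed when $p_i$ was created), and with $diam(V_k)<\min\{2^{-k},r_1,\dots,r_{k-1}\}$; apply Step 1 inside $V_k$ to get $h_k$ and set $g_k=h_k\circ g_{k-1}$. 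As in the single-ball case one checks that $g_k$ has exactly the fibres $g_k^{-1}(p_i)=Cl(C_i)$ for $i\leq k$ and singletons otherwise, and remains the identity off a compact subset of $U$. Since $h_k$ moves points less than $2^{-k}$, the $g_k$ are uniformly Cauchy and converge uniformly to a continuous $f:S^n\to S^n$ with $f|_{S^n-U}=\mathrm{id}$. The choice of margins forces that once the orbit of a point enters $\B(p_i,r_i)$ it is frozen by every later $h_j$, which yields both $f^{-1}(p_i)=Cl(C_i)$ and, for $x$ not among the $p_i$, that $f^{-1}(x)$ is a single point; the same freezing argument shows $f$ loses no point of $S^n$, so $f$ is onto. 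Hence $f$ is the required map, and the $Cl(C_i)$ are precisely its non-degenerate fibres.

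\emph{Expected main obstacle.} I expect the delicate part to be the bookkeeping in Step 2 — arranging the neighbourhoods $V_k$ and margins $r_i$ so that the limit $f$ acquires no extra non-degenerate fibre, i.e. so that no point outside $Cl(C_i)$ is dragged all the way onto $p_i$ by the infinitely many elementary moves. This is exactly where the topological quasiconvexity hypothesis is indispensable: by Proposition \ref{quasiconvexity} it makes $S^n/\!\sim$ Hausdorff, so $\{Cl(C_i)\}$ behaves as a null family, and that is what permits a compatible system of shrinking neighbourhoods; without it the $Cl(C_i)$ could cluster so badly that no map with exactly these fibres could be the identity off a small set. Step 1 is otherwise routine, its one subtlety being the appeal to tameness (through the Generalized Schoenflies and Annulus theorems) to produce the collar that legitimises the radial collapse.
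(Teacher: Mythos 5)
The paper states this proposition as a citation (Meyer, Theorem~2 of~\cite{Me}; Moore for $n=2$) and gives no proof of its own, so there is nothing internal to compare against. Your infinite-composition scheme follows a reasonable general idea, and Step~1 (collapsing a single tame ball inside a prescribed open set via a bicollar and the Annulus Theorem) is sound, but Step~2 has gaps that are not mere bookkeeping; they sit exactly where the real content of Meyer's theorem lies.

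At stage $k$ you demand an open $V_k \supseteq g_{k-1}(Cl(C_k))$ with $diam(V_k) < \min\{2^{-k}, r_1, \dots, r_{k-1}\}$ and with $Cl(V_k)$ disjoint from every $\B(p_i,r_i)$. Both demands can fail. The null-family property (from topological quasiconvexity) controls $diam(Cl(C_k))$, but the elementary maps $h_j$ from Step~1 radially stretch a collar and can inflate $diam\bigl(g_{k-1}(Cl(C_k))\bigr)$ well past $2^{-k}$ — there is no control on the expansion accumulated by the composition. Moreover the balls $C_j$ with $j>i$ may accumulate on $\partial C_i$; after $C_i$ is collapsed to $p_i$ their images accumulate on $p_i$, so no $V_j$ containing $g_{j-1}(Cl(C_j))$ can avoid $\B(p_i,r_i)$. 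Independently of these satisfiability issues, the freezing argument only controls orbits that eventually enter some $\B(p_i,r_i)$; it does not exclude two distinct points outside every $Cl(C_i)$ whose orbits pass through $V_k$ for infinitely many $k$ and coalesce in the uniform limit. Each finite stage $g_k$ is injective off $\bigcup_{i\le k} Cl(C_i)$, but the $h_k$ are not uniformly bi-Lipschitz there, and injectivity does not survive uniform limits. What is actually needed is that every point of $S^n$ other than the $p_i$ lies in only finitely many $V_k$ — that is, the $V_k$ themselves form a null family whose limit set is contained in $\{p_i\}$ — and arranging that is a genuinely different inductive task from stipulating $diam(V_k)<2^{-k}$ at each step.
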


\begin{obs}The version that we stated is in \cite{Ca}. This result is due to Moore (Theorem 25 of \cite{Mo}) when $n=2$.
\end{obs}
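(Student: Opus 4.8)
The final statement is a remark of bibliographic character: it makes no new mathematical assertion, recording instead that the formulation of the Decomposition Theorem used here is the one appearing in \cite{Ca}, and that for $n=2$ the result is originally Moore's Theorem~25 of \cite{Mo}. Strictly speaking there is nothing to \emph{prove}; what one can do is explain why the $n=2$ case of the stated proposition is a transcription of Moore's classical decomposition theorem, and I would organize that explanation in two steps.

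The first step matches the hypotheses. Moore's theorem concerns an upper semicontinuous decomposition $G$ of $S^{2}$ whose nondegenerate elements are continua none of which separates $S^{2}$, and it concludes that $S^{2}/G$ is homeomorphic to $S^{2}$. Given pairwise disjoint tame open balls $C_{1},C_{2},\dots$ in $S^{2}$ with $\bigcup_{i} Cl(C_{i})\subseteq U$ and with $\Delta S^{2}\cup\bigcup_{i} Cl(C_{i})^{2}$ topologically quasiconvex, set $G=\{Cl(C_{i})\}_{i}\cup\{\{x\}:x\notin\bigcup_{i} Cl(C_{i})\}$. Each $Cl(C_{i})$ is a closed $2$-disc, hence a continuum with connected complement in $S^{2}$, so the non-separation requirement is met (and, in dimension $2$, tameness is automatic by Schoenflies, so that hypothesis carries no extra content). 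Unwinding the definition of topological quasiconvexity, the hypothesis says precisely that for every $\varepsilon>0$ all but finitely many $Cl(C_{i})$ have diameter $<\varepsilon$; together with pairwise disjointness and compactness of $S^{2}$ this is exactly the condition that $G$ be an upper semicontinuous decomposition of $S^{2}$ into closed sets, so that $S^{2}/G$ is Hausdorff and Moore's theorem applies.

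The second step reads off the conclusion. Moore's theorem supplies a homeomorphism $S^{2}/G\cong S^{2}$; composing it with the quotient map yields a surjection $f:S^{2}\to S^{2}$ for which each $Cl(C_{i})$ is a point-preimage and for which $f^{-1}(x)$ is a singleton whenever $x\notin f(\bigcup_{i} Cl(C_{i}))$. To secure the remaining clause — that $f$ may be taken to be the identity off $U$ — I would use the relative form of the decomposition theorem: first enlarge the $Cl(C_{i})$ to pairwise disjoint closed discs $\overline{V_{i}}\subseteq U$ with $diam(V_{i})\to 0$, then collapse each $Cl(C_{i})$ to a point inside $V_{i}$ by a map that is the identity on $\partial V_{i}$, and extend by the identity outside $\bigcup_{i}\overline{V_{i}}$; continuity of the resulting $f$ at the accumulation points of the $V_{i}$ is exactly what the ``diameters tend to $0$'' (i.e.\ upper semicontinuity) hypothesis guarantees. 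This completes the $n=2$ case.

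For the remaining values of $n$ (with the dimension restriction recorded in the statement) the proposition is Meyer's Theorem~2 of \cite{Me} in the packaging of \cite{Ca}, and it is invoked here as a black box, so no proof is attempted. There is accordingly no real mathematical obstacle in the remark; the only point requiring any care is lexical — recognizing ``topologically quasiconvex'' as ``upper semicontinuous with non-separating elements'' — together with supplying the relative version of Moore's theorem that pins the map down to the identity outside $U$.
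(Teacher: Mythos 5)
This is a bibliographic remark and the paper attaches no proof to it, so there is nothing in the source to compare against; you correctly recognize that the only thing one could usefully add is an explanation of how Moore's theorem yields the $n=2$ case, and your sketch of that is sound. Matching the hypotheses (disjoint closed $2$-discs forming a null family; quotient Hausdorff) to Moore's upper semicontinuous, non-separating decomposition is right, and your device for arranging $f$ to be the identity off $U$ — fattening each $Cl(C_{i})$ to a disc $\overline{V_{i}}\subseteq U$ from a null family, collapsing within each $V_{i}$ relative to $\partial V_{i}$, extending by the identity, and using the shrinking diameters for continuity at accumulation points — is the standard relative version and is correct.

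One small imprecision appears in your closing sentence: ``topologically quasiconvex'' should not be glossed as ``upper semicontinuous with non-separating elements.'' In the paper's usage it encodes only the null-family/upper-semicontinuity condition (together with closedness of classes); non-separation is a separate hypothesis that, in the setting at hand, falls out of the elements being closures of tame open balls (in dimension $2$, closed discs). Your actual argument in the first step keeps these two facts distinct, so this is a slip of summary rather than a gap in reasoning.
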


As a consequence of all these theorems, it is possible to characterize spheres with a finite set of tame balls removed. The following are easy and probably well known:

\begin{lema}\label{ballseparation} Let $A_{1},A_{2},...,A_{m}$ be disjoint closed tame balls of $S^{n}$, with $n > 1$, and $F$ a closed set of $S^{n}$ such that $S^{n}-F$ is connected and $\forall i \in \{1,...,m\}$, $A_{i} \cap F = \emptyset$. Then there is a closed tame ball $C$ that contains $F$ and $A_{i}\cap C = \emptyset$, $\forall i \in \{1,...,m\}$.
\end{lema}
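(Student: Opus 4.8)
The plan is to turn the problem from a statement about the (possibly badly embedded) set $F$ into an \emph{engulfing} statement about the tame balls $A_i$. It suffices to produce a closed tame ball $E$ in $S^n$ with $\bigcup_{i=1}^m A_i \subseteq int(E)$ and $E \cap F = \emptyset$. Indeed, granting such an $E$, set $C := S^n - int(E) = Cl(S^n - E)$. Since $E$ is a tame ball, $Cl(C) = C$ and $S^n - int(C) = Cl(int(E)) = E$ are both closed balls, so $C$ is a tame ball; moreover $F \cap E = \emptyset$ gives $F \subseteq C$, and $A_i \subseteq int(E)$ gives $A_i \cap C = \emptyset$ for all $i$. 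So the whole content is the construction of $E$.

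To build $E$, observe that $W := S^n - F$ is a nonempty connected open subset of $S^n$, hence a connected (thus path-connected) open $n$-manifold, and it contains every $A_i$. Choose points $x_i \in int(A_i)$. Since $n > 1$, one can join $x_1, \dots, x_m$ by tame arcs $\gamma_1, \dots, \gamma_{m-1}$ in $W$, $\gamma_i$ running from $x_i$ to $x_{i+1}$, put in general position so that each $\gamma_i$ meets $\bigcup_j A_j$ in exactly two subarcs, one at each end (crossing $\partial A_i$ and $\partial A_{i+1}$ in a single point each), and so that the $\gamma_i$ are otherwise pairwise disjoint and disjoint from $\bigcup_j A_j$ — possible because arcs have codimension $n-1 \ge 1$ in the manifold $W$ (for $n = 2$ one simply routes the arcs inside the connected open surface $W$ so that $\bigcup_j \gamma_j$ is a forest). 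Then $Z := \bigcup_i A_i \cup \bigcup_j \gamma_j$ is a compact connected subset of $W$ which is a ``tree of balls'', hence collapsible. Take $E$ to be a small closed regular (tubular) neighbourhood of $Z$ in $W$ with $Z \subseteq int(E)$: it is a closed $n$-ball whose boundary $\partial E$ is a bicollared $(n-1)$-sphere in $S^n$, hence tame by the Generalized Schoenflies Theorem, so $E$ is a tame ball. By construction $\bigcup_i A_i \subseteq int(E)$, and $E \subseteq W$ forces $E \cap F = \emptyset$. Thus $E$ is as required, and the lemma follows from the reduction above.

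The main obstacle is the last construction: that the connecting arcs can be chosen in general position and that a regular neighbourhood of the resulting compact collapsible set (finitely many tame balls tamely joined by tame arcs) is again a closed ball with bicollared boundary. These are standard facts of manifold topology, and the hypothesis $n > 1$ is used precisely to choose the connecting arcs away from the prescribed crossing points of each other and of the $A_j$; the case $n = 1$ is degenerate and can be handled directly, since there $Z$ is already an arc inside an arc.
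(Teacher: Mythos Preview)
Your proof is correct and takes a genuinely different route from the paper. The paper argues from the $F$ side: it uses the Decomposition Theorem to collapse each $A_i$ to a single point, removes one further point to identify the target with $\R^n$, uses connectedness of $\R^n-\pi(F)$ to push those $m$ points far away from $\pi(F)$ by a homeomorphism fixing $\pi(F)$, and finally takes (the pullback of) a round metric ball around $\pi(F)$. You argue from the $A_i$ side, by engulfing: pass to the complementary tame ball, connect the $A_i$ by tame arcs inside the open manifold $S^n-F$, and take a regular neighbourhood of the resulting tree of balls. The paper's route has the advantage that it only uses tools already stated in the paper (the Decomposition Theorem) and immediately reduces everything to the elementary fact that finitely many points in a connected open subset of $\R^n$ can be moved wherever one likes by an ambient homeomorphism fixing the complement. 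Your route is more direct and avoids the Decomposition Theorem entirely, but it leans on the existence of topological tubular neighbourhoods for locally flat arcs that are compatible with the given bicollars of the $\partial A_i$; this is indeed standard (and trivial for $n\le 3$ since $\mathrm{TOP}=\mathrm{PL}$ there), but since the lemma as stated does not exclude $n=4$, a brief citation for that tube lemma would be appropriate if you intend to replace the paper's argument.
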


\begin{proof}By \textbf{Decomposition Theorem}, there is a continuous surjective map $\pi: S^{n} \rightarrow S^{n}$ that collapses only the tame balls $A_{1},...,A_{m}$. Let $X$ be $S^{n} - \{p\}$, where $p \notin \pi(A_{1}\cup....\cup A_{n} \cup F)$. We have that $X$ is homeomorphic to $\R^{n}$ and we choose a metric $d$ on $X$ induced by the euclidean metric on $\R^{n}$. Let $k= diam \ \pi(F)$. Since $X$ is homeomorphic to $\R^{n}$ and $X- \pi(F)$ is connected,  for every pair of distinct $m$-tuples that do not intersect $\pi(F)$, there is a homeomorphism $f: X \rightarrow X$ that send one $m$-tuple to the other and it is the identity map on $\pi(F)$. So we choose the first $m$-tuple as $(\pi(A_{1}),...,\pi_{1}(A_{m}))$ and the second one any $m$-tuple $(x_{1},...,x_{m})$ such that $\forall i \in \{1,...,m\}$, $d(x_{i}, \pi(F)) > k$. If $y \in \pi(F)$, then the closed ball $Cl(\B(y,k))$ is a tame ball that contains $\pi(F)$ and do not intersect $\{x_{1},...,x_{m}\}$. Since $\pi|_{S^{n} - (A_{1}\cup ...\cup A_{m})}: S^{n} - (A_{1} \cup ... \cup A_{m}) \rightarrow S^{n} - \pi(A_{1}\cup ...\cup A_{m})$ is a homeomorphism, then $D = \pi^{-1}(f^{-1}(Cl(\B(y,k))))$ is the tame ball that we are looking for.
\end{proof}

\begin{lema}\label{cylinders}Consider the cylinder $S^{n-1}\times [0,1]$ and a pair of homeomorphisms $f_{i}: S^{n-1}\times\{i\} \rightarrow S^{n-1}\times\{i\}$, for $i \in \{0,1\}$, such that both preserve or reverse orientation (for some orientation of $S^{n-1}\times [0,1]$). Then, there exists a homeomorphism $f: S^{n-1}\times [0,1] \rightarrow S^{n-1}\times [0,1]$ such that $f|_{S^{n-1}\times\{i\}} = f_{i}$.
\end{lema}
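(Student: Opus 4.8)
The plan is to realize the desired homeomorphism as a level‑preserving (fibre‑wise) map coming from an isotopy between the two boundary homeomorphisms, the existence of which is supplied by the Isotopy Classification for Spheres.

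First I would set up coordinates: identify $S^{n-1}\times\{0\}$ and $S^{n-1}\times\{1\}$ with $S^{n-1}$ via the projection onto the first factor, and let $g_{0},g_{1}\colon S^{n-1}\to S^{n-1}$ be the homeomorphisms corresponding to $f_{0},f_{1}$ under these identifications, so that $f_{i}(x,i)=(g_{i}(x),i)$. An orientation of $S^{n-1}\times[0,1]$ induces an orientation on each of the two boundary spheres, and whether a self‑homeomorphism of a closed connected manifold preserves or reverses an orientation does not depend on which of the two orientations one has fixed (it is detected by the degree, i.e. by the action on top homology). Hence the hypothesis says precisely that $g_{0}$ and $g_{1}$ are either both orientation‑preserving or both orientation‑reversing.

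Next I would invoke the Isotopy Classification for Spheres, applied to $S^{n-1}$: since $g_{0}$ and $g_{1}$ are of the same orientation type, there is an isotopy between them, i.e. a continuous map $H\colon S^{n-1}\times[0,1]\to S^{n-1}$ with $H_{t}:=H(\cdot,t)$ a homeomorphism for each $t$ and $H_{0}=g_{0}$, $H_{1}=g_{1}$. Define $f\colon S^{n-1}\times[0,1]\to S^{n-1}\times[0,1]$ by $f(x,t)=(H_{t}(x),t)$. This map is continuous; it is a bijection, since it carries each slice $S^{n-1}\times\{t\}$ bijectively onto itself via $H_{t}$; and being a continuous bijection of a compact Hausdorff space onto itself it is a homeomorphism. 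Finally $f(x,i)=(g_{i}(x),i)=f_{i}(x,i)$, so $f|_{S^{n-1}\times\{i\}}=f_{i}$ for $i\in\{0,1\}$, as required.

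I do not expect a genuine obstacle here; this is exactly the sort of statement the text flags as ``easy and probably well known''. The only points needing a little care are the orientation bookkeeping in the reduction step (checking that the hypothesis really does translate to ``$g_{0}$ and $g_{1}$ have equal orientation type'', using the convention‑independence of the degree) and the verification that $f$ is a homeomorphism, which is cleanest via the compactness of $S^{n-1}\times[0,1]$ (a continuous bijection of a compact Hausdorff space onto itself is a homeomorphism) rather than by arguing directly that $t\mapsto H_{t}^{-1}$ varies continuously.
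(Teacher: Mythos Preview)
Your proposal is correct and follows essentially the same approach as the paper: project the boundary homeomorphisms to self-homeomorphisms $g_0,g_1$ of $S^{n-1}$, invoke the Isotopy Classification for Spheres to get an isotopy $H$ between them, and take $f(x,t)=(H_t(x),t)$. If anything, your write-up is slightly more careful than the paper's, which does not spell out the orientation bookkeeping or the reason $f$ is a homeomorphism.
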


\begin{proof}Let $\phi_{i}: S^{n-1}\times\{i\} \rightarrow S^{n-1}$ be the projection maps, for $i \in \{0,1\}$. So the maps $\phi_{1}\circ f_{1}\circ \phi^{-1}_{1} $ and $\phi_{2} \circ f_{2}\circ \phi^{-1}_{2}$ both preserve or reverse orientation. Then, by the Isotopy Classification Theorem, there is an isotopy between the maps $\phi_{1} \circ f_{1}\circ \phi^{-1}_{1}$ and $\phi_{2}\circ f_{2}\circ \phi^{-1}_{2}$: $\eta: S^{n-1} \times [0,1] \rightarrow S^{n-1}$. Then the map  $f: S^{n-1} \times [0,1] \rightarrow S^{n-1} \times [0,1]$ defined as $f(x,t) = (\eta(x,t),t)$ is a homeomorphism. If $x \in S^{n-1}$, then  $f((x,i)) = (\phi_{i} \circ f_{i}\circ \phi^{-1}_{i}(x),i) = \\ (\phi_{i} \circ f_{i}((x,i)),i) = \phi^{-1}_{i} \circ \phi_{i} \circ f_{i}((x,i)) = f_{i}((x,i))$. Thus, $f$ is the homeomorphism that we want.
\end{proof}

\begin{lema}\label{extensionsphere}Let $A_{1},A_{2}$ be disjoint closed tame balls in $S^{n}$ and $f_{i}: A_{i} \rightarrow A_{i}$ be homeomorphisms that both preserve or reverse orientation (for some orientation of $S^{n}$). Then there exists a homeomorphism $f: S^{n} \rightarrow S^{n}$ such that $f|_{A_{i}} = f_{i}$.
\end{lema}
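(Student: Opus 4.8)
The plan is to reduce the statement about $S^n$ to the cylinder case handled by Lemma \ref{cylinders}, using the Annulus Theorem to produce a collar region between the two tame balls. First I would observe that $S^n - (int(A_1) \cup int(A_2))$ is a space bounded by two disjoint tame copies of $S^{n-1}$, namely $\partial A_1$ and $\partial A_2$. By the Annulus Theorem, the closure of the middle component of $S^n$ minus these two spheres is homeomorphic to $S^{n-1} \times [0,1]$; since $A_1$ and $A_2$ are tame balls, $S^n - (int(A_1) \cup int(A_2))$ is exactly that closed middle component, so we obtain a homeomorphism $\Psi: S^n - (int(A_1) \cup int(A_2)) \rightarrow S^{n-1} \times [0,1]$ carrying $\partial A_1$ to $S^{n-1} \times \{0\}$ and $\partial A_2$ to $S^{n-1} \times \{1\}$.

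Next I would extend the two given homeomorphisms to the whole sphere in three pieces: on $A_1$, on $A_2$, and on the complementary cylinder. The restrictions $f_i|_{\partial A_i}$ are homeomorphisms of $\partial A_i$, and since $f_i: A_i \rightarrow A_i$ is a homeomorphism of a closed ball, $f_i|_{\partial A_i}$ preserves or reverses orientation in agreement with whether $f_i$ does (and hence in agreement with whether $f$ should on $S^n$). Transporting through $\Psi$, the maps $\Psi \circ f_i|_{\partial A_i} \circ \Psi^{-1}$ give homeomorphisms of $S^{n-1} \times \{0\}$ and $S^{n-1} \times \{1\}$ that both preserve or both reverse orientation. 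By Lemma \ref{cylinders} there is a homeomorphism $g$ of $S^{n-1} \times [0,1]$ restricting to these on the two boundary spheres; then $h := \Psi^{-1} \circ g \circ \Psi$ is a homeomorphism of $S^n - (int(A_1) \cup int(A_2))$ agreeing with $f_i$ on each $\partial A_i$. Define $f$ to be $f_1$ on $A_1$, $f_2$ on $A_2$, and $h$ on the cylinder piece; these agree on the overlaps $\partial A_1, \partial A_2$, so by the pasting lemma $f$ is a well-defined continuous bijection of the compact Hausdorff space $S^n$, hence a homeomorphism, and $f|_{A_i} = f_i$ as required.

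The one point requiring care — and the main obstacle — is the orientation bookkeeping needed to invoke Lemma \ref{cylinders}: one must check that ``$f_i$ preserves or reverses orientation of $S^n$'' translates correctly into the hypothesis that $\Psi \circ f_i|_{\partial A_i} \circ \Psi^{-1}$ both preserve or both reverse orientation of $S^{n-1} \times [0,1]$ in the sense of that lemma. This follows because for a homeomorphism of a closed $n$-ball, the induced map on the boundary sphere has a well-defined degree equal to $\pm 1$ according as the ball map preserves or reverses orientation (this is a standard fact, e.g. via the long exact sequence of the pair $(D^n, S^{n-1})$), and because the collaring homeomorphism $\Psi$ can be chosen so that the orientation of $S^{n-1} \times [0,1]$ it induces is compatible on both ends with the orientation of $S^n$ restricted near $A_1$ and $A_2$. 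With this compatibility in hand, the hypothesis ``both preserve or both reverse'' for the two boundary maps is exactly the hypothesis ``$f_1$ and $f_2$ both preserve or both reverse the orientation of $S^n$,'' and the rest is the pasting argument above. A minor auxiliary point is that the Annulus Theorem as stated gives the existence of the three components with the stated closures, so one should note that the complement of the interiors of two disjoint tame balls in $S^n$ is precisely the closure of the single component not equal to $int(A_1)$ or $int(A_2)$, which is immediate from the definition of tame ball.
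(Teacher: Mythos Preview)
Your argument is correct and follows exactly the route the paper intends: apply the Annulus Theorem to identify $S^{n}\setminus(\operatorname{int}A_{1}\cup\operatorname{int}A_{2})$ with $S^{n-1}\times[0,1]$, then invoke Lemma~\ref{cylinders} on the transported boundary maps and glue. The paper's own proof is the one-line ``Immediate from the Annulus Theorem and the last lemma''; you have simply spelled out the orientation bookkeeping that the paper leaves implicit.
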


\begin{proof}Immediate from the \textbf{Annulus Theorem} and the last lemma.
\end{proof}

\begin{lema}Let $A,B,C$ be closed tame balls in $S^{n}$ such that $A,B \subseteq C$ and $g: A \rightarrow B$ be a orientation preserving map (for some orientation of $S^{n}$). Then there exists a homeomorphism $f: C \rightarrow C$ such that $f|_{\partial C} = id_{\partial C}$ and $f|_{A} = g$.
\end{lema}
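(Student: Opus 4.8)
The plan is to build $f$ piecewise: put $f=g$ on $A$, and build $f$ on the complementary region $C\setminus int(A)$ so that it is the identity on $\partial C$ and matches $g$ along $\partial A$. First note that $A,B\subseteq int(C)$ (without this the statement fails whenever $g$ is not the identity on $\partial C\cap A$), so $\partial C$, $\partial A$, $\partial B$ are pairwise disjoint tame $(n-1)$-spheres in $S^{n}$. Now $C\setminus int(A)$ is the closure of the middle component of $S^{n}\setminus(\partial C\cup\partial A)$ — the other two components being the balls $int(A)$ and $S^{n}\setminus C$ — so by the \textbf{Annulus Theorem} it is homeomorphic to the cylinder $S^{n-1}\times[0,1]$; likewise $C\setminus int(B)$ is a cylinder. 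Thus it suffices to produce a homeomorphism $C\setminus int(A)\rightarrow C\setminus int(B)$ restricting to $id_{\partial C}$ on $\partial C$ and to $g|_{\partial A}$ on $\partial A$; pasting it to $g$ along $\partial A$ gives $f$, which is then a continuous bijection of the compact Hausdorff space $C$, hence a homeomorphism, with $f|_{A}=g$ and $f|_{\partial C}=id_{\partial C}$.

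Concretely, I would fix homeomorphisms $\mu\colon C\setminus int(A)\rightarrow S^{n-1}\times[0,1]$ and $\lambda\colon C\setminus int(B)\rightarrow S^{n-1}\times[0,1]$ sending $\partial C$ to $S^{n-1}\times\{0\}$, and first replace $\lambda$ by $\Xi\circ\lambda$ for a suitable self-homeomorphism $\Xi$ of the cylinder (supplied by Lemma \ref{cylinders}, choosing its value on $S^{n-1}\times\{1\}$ to have the orientation type forced on it) so as to arrange $\lambda|_{\partial C}=\mu|_{\partial C}$. Setting $\gamma_{1}:=\lambda|_{\partial B}\circ g|_{\partial A}\circ(\mu|_{\partial A})^{-1}$, a self-homeomorphism of $S^{n-1}\times\{1\}$, the wanted homeomorphism of cylinders, transported through $\mu$ and $\lambda$, is a self-homeomorphism $\Phi$ of $S^{n-1}\times[0,1]$ equal to the identity on $S^{n-1}\times\{0\}$ and to $\gamma_{1}$ on $S^{n-1}\times\{1\}$; Lemma \ref{cylinders} supplies such a $\Phi$ provided $id$ and $\gamma_{1}$ are compatibly oriented. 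Then $f|_{A}:=g$ and $f|_{C\setminus int(A)}:=\lambda^{-1}\circ\Phi\circ\mu$ agree on $\partial A$ (since $\Phi|_{S^{n-1}\times\{1\}}=\gamma_{1}$ gives $\lambda^{-1}\circ\Phi\circ\mu|_{\partial A}=g|_{\partial A}$), and $f|_{\partial C}=\lambda^{-1}\circ\mu|_{\partial C}=id_{\partial C}$, completing the construction.

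The one genuinely nontrivial point is the orientation compatibility of $id$ and $\gamma_{1}$, and this is exactly where the hypothesis on $g$ enters (it is also necessary: a self-homeomorphism of $C$ fixing $\partial C$ is orientation-preserving, so $f|_{A}$ must be). Fixing an orientation of $S^{n}$, transport the induced orientations of $C\setminus int(A)$ and of $C\setminus int(B)$ through $\mu$ and $\lambda$ to orientations of $S^{n-1}\times[0,1]$. On $\partial C$ the boundary orientations coming from the two regions coincide (both use the normal pointing out of $C$), and $\mu|_{\partial C}=\lambda|_{\partial C}$, so the two transported orientations of the cylinder induce the same orientation on $S^{n-1}\times\{0\}$ and hence are equal; call the common one $\omega$. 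On $\partial A$ (resp.\ $\partial B$) the boundary orientation from $C\setminus int(A)$ (resp.\ $C\setminus int(B)$) is the reverse of the one from $A$ (resp.\ $B$), so the orientation-preservation of $g$ says exactly that $g|_{\partial A}$ is orientation-preserving for the boundary orientations coming from the complementary regions. Reading $\gamma_{1}=\lambda|_{\partial B}\circ g|_{\partial A}\circ(\mu|_{\partial A})^{-1}$ as a composition of three orientation-preserving maps (for $\omega$'s boundary orientation on $S^{n-1}\times\{1\}$) shows $\gamma_{1}$ preserves that orientation, while $id$ trivially preserves $\omega$'s boundary orientation on $S^{n-1}\times\{0\}$; so Lemma \ref{cylinders} applies with the orientation $\omega$, as needed. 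Everything else is routine pasting and the usual compact-to-Hausdorff argument.
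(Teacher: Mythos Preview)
Your proof is correct and follows essentially the same route as the paper's: identify $C\setminus int(A)$ and $C\setminus int(B)$ as cylinders via the Annulus Theorem, use Lemma~\ref{cylinders} to produce the required homeomorphism between them with the prescribed boundary values, and glue with $g$. You supply considerably more detail than the paper does---in particular the explicit orientation bookkeeping showing that $\gamma_1$ and $id$ are compatibly oriented, and the observation that one needs $A,B\subseteq int(C)$ for the statement to make sense---but the underlying argument is the same.
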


\begin{proof}Let $Y_{A}$ be the connected manifold whose boundary is $\partial A \cup \partial C$ and $Y_{B}$ the connected manifold whose boundary is $\partial B \cup \partial C$. By the Annulus Theorem, the spaces $Y_{A}$ and $Y_{B}$ are both homeomorphic to  $S^{n-1} \times [0,1]$. Since $g$ is an orientation preserving map, there is a homeomorphism $h: Y_{A} \rightarrow Y_{B}$ such such that $h|_{\partial A} = g|_{\partial A}$ and  $h|_{\partial C} = id_{\partial C}$. Thus the map $f: C \rightarrow C$ defined by $f(x) = g(x)$ if $x \in A$ and $f(x) = h(x)$ if $x \in Y_{A}$ is the homeomorphism that we want.
\end{proof}

\begin{prop}\label{characterizationofsphereswithoutballs} Consider the spaces $X = S^{n} - (A_{1}\cup A_{2} \cup ... \cup A_{n})$ and $Y = S^{n} - (B_{1}\cup B_{2} \cup ... \cup B_{n})$, where $\{A_{1},A_{2},...,A_{n}\}$ and $\{B_{1},B_{2},...,B_{n}\}$ are families of disjoint closed tame balls. Let, for every $i \in \{1,...,n\}$,  a homeomorphism $f_{i}: A_{i} \rightarrow B_{i}$ that preserves the orientation of the spheres (induced by a  fixed orientation of $S^{n}$). Then there exists a homeomorphism $f: S^{n} \rightarrow S^{n}$ such that $\forall i \in \{1,...,n\}$, $f|_{A_{i}} = f_{i}$.
\end{prop}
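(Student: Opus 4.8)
The plan is to split the proof into two parts: first build an orientation-preserving homeomorphism $h\colon S^{n}\to S^{n}$ with $h(A_{i})=B_{i}$ for every $i$, and then postcompose $h$ with a homeomorphism that turns $h|_{A_{i}}$ into $f_{i}$. I would establish the existence of such an $h$ by induction on $m$, the number of balls. For $m=1$: both $A_{1}$ and $Cl(S^{n}-A_{1})$ are closed balls, and likewise for $B_{1}$, so one may choose an orientation-preserving homeomorphism $g\colon A_{1}\to B_{1}$ and extend the sphere homeomorphism $g|_{\partial A_{1}}$ over a homeomorphism $Cl(S^{n}-A_{1})\to Cl(S^{n}-B_{1})$ by coning; pasting gives $h$, which is orientation-preserving because its local degree at an interior point of $A_{1}$ (where it agrees with $g$) is $+1$. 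For the inductive step, use the case $m=1$ to get an orientation-preserving $h'$ with $h'(A_{m+1})=B_{m+1}$; then $C:=Cl(S^{n}-B_{m+1})$ is a closed tame ball whose interior contains the two families $\{h'(A_{i})\}_{i\le m}$ and $\{B_{i}\}_{i\le m}$, and it suffices to find an orientation-preserving homeomorphism of $C$ that is the identity on $\partial C$ and sends $h'(A_{i})$ to $B_{i}$: extending it by the identity on $B_{m+1}$ and composing with $h'$ closes the induction.

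So the whole matter reduces to the following relative statement, which I would again treat by induction on $m$: given a closed tame ball $C$ and two families $\{P_{i}\}_{i\le m}$, $\{Q_{i}\}_{i\le m}$ of disjoint closed tame balls contained in $int(C)$, there is an orientation-preserving homeomorphism $C\to C$ that is the identity on $\partial C$ and carries $P_{i}$ to $Q_{i}$. The base case $m=1$ is precisely the lemma preceding this proposition, applied with $A=P_{1}$, $B=Q_{1}$, ambient ball $C$, and any orientation-preserving homeomorphism $P_{1}\to Q_{1}$. For the inductive step one first moves $P_{m+1}$ onto $Q_{m+1}$ by the case $m=1$, and then must slide the remaining balls into place while keeping $Q_{m+1}$ fixed; collapsing $Q_{m+1}$ to a point by the Decomposition Theorem turns this into a problem with one fewer ball (fixing one additional point), of essentially the same shape, and a routine lifting argument using that tame balls have product collar neighbourhoods recovers a solution of the original problem from one of the collapsed one. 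The only genuinely delicate point — and the heart of the argument — is the underlying sliding fact: in a connected $n$-manifold with $n\ge 2$, any closed tame ball can be carried onto any other by an ambient isotopy supported in a compact set that avoids the boundary and any prescribed finite set of points (join the two balls by an arc avoiding those points and push one ball along it). Lemma \ref{ballseparation} provides the bookkeeping needed to corral all but one of the balls into an auxiliary sub-ball at each stage, and Lemma \ref{cylinders} handles the cylindrical regions that arise if one carries out the recursion via $Cl(C-Q_{m+1})\cong S^{n-1}\times[0,1]$ instead of by collapsing.

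With $h$ in hand, each $g_{i}:=f_{i}\circ(h|_{A_{i}})^{-1}\colon B_{i}\to B_{i}$ is an orientation-preserving self-homeomorphism, and it is enough to extend the $g_{i}$ simultaneously to a homeomorphism $g\colon S^{n}\to S^{n}$, for then $f:=g\circ h$ has $f|_{A_{i}}=f_{i}$. To build $g$, first produce pairwise disjoint closed tame balls $C_{1},\dots,C_{m}$ with $B_{i}\subseteq int(C_{i})$: inductively, separate $B_{1}\cup\cdots\cup B_{m-1}$ from $B_{m}$ by a closed tame ball $E$ using Lemma \ref{ballseparation} (throwing $Cl(S^{n}-D)$ into the family when one needs to stay inside a ball $D$), fatten $B_{m}$ inside $Cl(S^{n}-E)$ using a product neighbourhood of $\partial B_{m}$, and recurse inside $E$. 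For each $i$, the lemma preceding this proposition, applied with $A=B=B_{i}$, ambient ball $C_{i}$, and map $g_{i}$, gives a homeomorphism $\psi_{i}\colon C_{i}\to C_{i}$ equal to the identity on $\partial C_{i}$ and to $g_{i}$ on $B_{i}$. Defining $g$ to be $\psi_{i}$ on each $C_{i}$ and the identity on $S^{n}-\bigcup_{i} int(C_{i})$ gives a well-defined homeomorphism (the definitions agree, all being the identity, on each $\partial C_{i}$) with $g|_{B_{i}}=g_{i}$, so $f=g\circ h$ is the desired homeomorphism. All the orientation hypotheses enter only to make the self-maps of balls extend across their boundaries and glue consistently, and each such verification reduces to the remark that the local degree of a homeomorphism of $S^{n}$ equals its global degree.
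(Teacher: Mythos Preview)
Your argument is correct in outline but considerably longer than needed, and its first stage leans on facts you only sketch (the ambient-isotopy ``sliding fact'' and the ``routine lifting'' after collapsing $Q_{m+1}$). The paper's proof bypasses all of this: by Lemma~\ref{ballseparation} (with $F=A_{1}\cup B_{1}$, whose complement in $S^{n}$ is connected for $n\geqslant 2$, and the family $A_{2},\dots,A_{n},B_{2},\dots,B_{n}$) one finds a closed tame ball $C_{1}\supseteq A_{1}\cup B_{1}$ disjoint from the rest, and recursively disjoint $C_{i}\supseteq A_{i}\cup B_{i}$; then the lemma immediately preceding this proposition, applied with $A=A_{i}$, $B=B_{i}$, $C=C_{i}$, $g=f_{i}$, yields $f'_{i}\colon C_{i}\to C_{i}$ equal to the identity on $\partial C_{i}$ and to $f_{i}$ on $A_{i}$, and setting $f=f'_{i}$ on each $C_{i}$ and the identity elsewhere finishes in one stroke. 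Your second stage is exactly this construction, but applied only to self-maps $g_{i}\colon B_{i}\to B_{i}$, which is why you needed a preliminary $h$ carrying $A_{i}$ to $B_{i}$; the single observation that the enclosing balls $C_{i}$ may be chosen to contain $A_{i}\cup B_{i}$ rather than just $B_{i}$ makes your entire first stage --- with its own induction, its Decomposition-Theorem collapse, and its appeal to an unstated isotopy theorem --- unnecessary.
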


\begin{proof}By \textbf{Lemma \ref{ballseparation}} there exists $C_{1}$ a closed tame ball that contains $A_{1}$ and $B_{1}$ and does not intersect the tame balls $A_{2},...,A_{n},B_{2},...,B_{n}$, We also construct, recursively, tame balls $C_{2},...,C_{n}$ such that $C_{i}$ contains $A_{i}$ and $B_{i}$ and does not intersect the tame balls $C_{1},...,C_{i-1}, A_{i+1},...,A_{n}, B_{i+1},...,B_{n}$. By the last lemma, there exists $f'_{i}: C_{i} \rightarrow C_{i}$ a homeomorphism such that $f'_{i}|_{\partial C_{i}} = id_{\partial C_{i}}$ and $f'_{i}|_{A_{i}} = f_{i}$. Then the map $f: S^{n} \rightarrow S^{n}$ such that $f(x) = f'_{i}(x)$ if $x \in C_{i}$ and $f(x) = x$ if $x \in X- \bigcup_{i = 1}^{n} C_{i}$ is a homeomorphism that restricts to a homeomorphism between $X$ and $Y$.
\end{proof}

\begin{obs}In particular, it shows that the homeomorphism class of the space $S^{n}$ minus $n$ open tame balls does not depend on the choice of these balls removed.
\end{obs}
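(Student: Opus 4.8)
The statement is an immediate corollary of Proposition~\ref{characterizationofsphereswithoutballs}, and the plan is simply to unwind the hypotheses of that proposition and check that they can always be met. Fix an orientation of $S^{n}$. Suppose $\{A_{1},\dots,A_{n}\}$ and $\{B_{1},\dots,B_{n}\}$ are two families of pairwise disjoint open tame balls in $S^{n}$, and we want to show $S^{n}-(A_{1}\cup\cdots\cup A_{n})$ is homeomorphic to $S^{n}-(B_{1}\cup\cdots\cup B_{n})$. Passing to closures $Cl(A_{i})$, $Cl(B_{i})$ (which are closed tame balls, again pairwise disjoint since the open balls are disjoint and tame), it suffices to produce, for each $i$, an orientation-preserving homeomorphism $f_{i}: Cl(A_{i})\to Cl(B_{i})$; then Proposition~\ref{characterizationofsphereswithoutballs} yields a homeomorphism $f:S^{n}\to S^{n}$ with $f|_{Cl(A_{i})}=f_{i}$, and in particular $f$ maps $\bigcup_{i}Cl(A_{i})$ onto $\bigcup_{i}Cl(B_{i})$ and hence restricts to a homeomorphism between the two complement spaces (it sends interiors to interiors, boundaries to boundaries).

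\textbf{Key step: building the $f_{i}$.} Since $Cl(A_{i})$ and $Cl(B_{i})$ are both homeomorphic to the closed ball $D^{n}$, there certainly exists \emph{some} homeomorphism $Cl(A_{i})\to Cl(B_{i})$. The only thing to arrange is orientation: if the chosen homeomorphism reverses the orientation induced from $S^{n}$, precompose it with an orientation-reversing self-homeomorphism of $D^{n}$ (e.g.\ a reflection in a coordinate hyperplane, transported to $Cl(A_{i})$ via any fixed chart). This produces an orientation-preserving $f_{i}$, which is all Proposition~\ref{characterizationofsphereswithoutballs} asks for. No compatibility between the different $f_{i}$ is required, because the $C_{i}$ constructed in that proposition's proof are pairwise disjoint.

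\textbf{Expected obstacle.} There is essentially no obstacle; the content was already spent in proving Proposition~\ref{characterizationofsphereswithoutballs} (and, before it, Lemma~\ref{ballseparation} and the Annulus Theorem). The only point requiring a word of care is the reduction from open balls to closed balls: one must note that if $A_{i}$ is an open tame ball then $Cl(A_{i})$ is a closed tame ball, that disjointness of the open balls passes to their closures (which uses that each $A_{i}$ equals the interior of $Cl(A_{i})$ — true for tame balls), and that a homeomorphism of pairs $(S^{n},\bigcup Cl(A_{i}))\to(S^{n},\bigcup Cl(B_{i}))$ automatically carries $\bigcup A_{i}$ to $\bigcup B_{i}$ and therefore restricts to the complements. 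This is routine, so the corollary follows in a few lines.
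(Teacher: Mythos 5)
Your approach is exactly what the paper intends: the Remark is just Proposition~\ref{characterizationofsphereswithoutballs} with the orientation hypothesis discharged, and you discharge it correctly by picking any homeomorphism $Cl(A_{i})\to Cl(B_{i})$ and, if necessary, precomposing with a reflection so that the induced map on boundary spheres is orientation-preserving. The only flaw is a side claim you make along the way: that disjointness of the open tame balls $A_{i}$ forces disjointness of their closures, ``which uses that each $A_{i}$ equals the interior of $Cl(A_{i})$.'' That implication is false -- two tangent open round discs in $S^{2}$ are disjoint, are tame, and each equals the interior of its closure, yet their closures meet. Disjointness of the closures does not follow from disjointness of the open balls; it has to be taken as a hypothesis (as it is throughout the paper, e.g.\ in the construction of $M^{n}_{n-1}$, the Decomposition Theorem, and Proposition~\ref{characterizationofsphereswithoutballs} itself, all of which assume pairwise disjoint \emph{closed} tame balls). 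With that hypothesis stated rather than ``derived,'' the rest of your argument is sound and is the same one-line reduction the paper has in mind.
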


\subsection{Cellular complexes}

\begin{prop}\label{aproximaçãoportriangulacoes}Let $\{\tilde{T}_{i}\}_{i \in \N}$, $\{\tilde{Q}_{i}\}_{i \in \N}$ be two families of cellular structures of a compact $n$-manifold $M$ (possibly with boundary) such that $\tilde{T}_{i+1}$ is a subdivision of $\tilde{T}_{i}$, $\tilde{Q}_{i+1}$ is a subdivision of $\tilde{Q}_{i}$, every $n$-cell of $\tilde{T}_{i}$ and  $\tilde{Q}_{i}$ has diameter less than $\frac{1}{2^{i}}$. Let $T_{i}$, $Q_{i}$ be the $n-1$-skeletons of $\tilde{T}_{i}$, $\tilde{Q}_{i}$, respectively. Let $\{f_{i}\}_{i \in \N}$, where $f_{i}: (M, \tilde{T}_{i}) \rightarrow (M, \tilde{Q}_{i})$ is a cellular isomorphism which satisfies that $\forall i \in \N$, $\forall j < i$, $f_{i}|_{T_{j}} = f_{j}|_{T_{j}}$. Then there is a homeomorphism $f: M \rightarrow M$ such that $\forall i \in \N$, $f|_{T_{i}} = f_{i}|_{T_{i}}$.
\end{prop}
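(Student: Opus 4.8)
The plan is to build $f$ as a uniform limit of the cellular isomorphisms $f_i$, viewed as self-homeomorphisms of $M$, using the compatibility condition on skeletons to force convergence on an increasingly fine ``grid'' and then arguing that the limit is a well-defined homeomorphism. First I would promote each cellular isomorphism $f_i\colon (M,\tilde T_i)\to(M,\tilde Q_i)$ to an actual homeomorphism $\hat f_i\colon M\to M$: a cellular isomorphism gives a PL (or topological) homeomorphism carrying each closed $n$-cell of $\tilde T_i$ onto the corresponding closed $n$-cell of $\tilde Q_i$, compatibly on faces, so one can glue cell-by-cell. Among all such extensions I want to choose $\hat f_i$ so that it agrees with $\hat f_{i-1}$ on the lower skeleton $T_{i-1}$; this is possible because $f_i|_{T_{i-1}}=f_{i-1}|_{T_{i-1}}$ by hypothesis, so on each $n$-cell $\sigma$ of $\tilde T_{i-1}$ I am extending a prescribed homeomorphism of $\partial\sigma$ (already fixed by $\hat f_{i-1}$) over the subdivided cell $\sigma$ — and since $\sigma$ is a ball this extension exists (e.g.\ coning, or Alexander's trick in the PL setting).

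Next I would estimate $\rho(\hat f_i,\hat f_{i-1})$ in the uniform metric $\rho$ on $C(M,M)$. The two maps agree on $T_{i-1}$, and every point $x\in M$ lies in some closed $n$-cell $\sigma$ of $\tilde T_{i-1}$, which has diameter $<2^{-(i-1)}$; both $\hat f_i(x)$ and $\hat f_{i-1}(x)$ lie in $\hat f_{i-1}(\sigma)$, which is a union of closed $n$-cells of $\tilde Q_{i-1}$... more precisely $\hat f_{i-1}(\sigma)$ is a single closed $n$-cell of $\tilde Q_{i-1}$, of diameter $<2^{-(i-1)}$. Hence $\rho(\hat f_i,\hat f_{i-1})<2^{-(i-1)}$, so $\{\hat f_i\}$ is uniformly Cauchy and converges uniformly to a continuous map $f\colon M\to M$. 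By the same argument applied to the inverses — noting $\hat f_i^{-1}$ is the extension of the cellular isomorphism $f_i^{-1}\colon(M,\tilde Q_i)\to(M,\tilde T_i)$ and that these inverses are also mutually compatible on skeletons because $f_i^{-1}|_{Q_{i-1}}=f_{i-1}^{-1}|_{Q_{i-1}}$ — the sequence $\{\hat f_i^{-1}\}$ converges uniformly to a continuous $g\colon M\to M$, and $g\circ f=\mathrm{id}$, $f\circ g=\mathrm{id}$ follow by passing to the limit in $\hat f_i^{-1}\circ\hat f_i=\mathrm{id}=\hat f_i\circ\hat f_i^{-1}$. So $f$ is a homeomorphism.

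Finally I must check $f|_{T_i}=f_i|_{T_i}$ for every $i$. Fix $i$ and $x\in T_i$. For every $j\ge i$ we have $\hat f_j|_{T_i}=\hat f_i|_{T_i}=f_i|_{T_i}$ by the compatibility (applied repeatedly: $\hat f_j|_{T_i}=\hat f_{j-1}|_{T_i}=\cdots=\hat f_i|_{T_i}$, since $T_i\subseteq T_{j-1}$ and $\hat f_j$ agrees with $\hat f_{j-1}$ on $T_{j-1}$). Hence the sequence $\hat f_j(x)$ is eventually constant equal to $f_i(x)$, so its limit $f(x)$ equals $f_i(x)$. This gives the desired conclusion.

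The main obstacle I anticipate is the very first step: upgrading an abstract cellular isomorphism to a genuine homeomorphism of $M$ while simultaneously respecting the previously chosen $\hat f_{i-1}$ on $T_{i-1}$. The cell-by-cell gluing requires that extending a boundary homeomorphism of an $n$-ball (the closed subdivided cell of $\tilde T_{i-1}$, whose boundary map is already determined) across the ball is always possible and can be done consistently on shared faces; in the PL category this is routine (Alexander's trick / the fact that $\mathrm{Homeo}(S^{n-1})$ extensions over $D^n$ exist), but one should be slightly careful about doing it coherently over the whole face lattice, proceeding by induction on skeleta — first match on $T_{i-1}$, then extend over each $i-1$-dimensional... i.e.\ over each top cell of $\tilde T_{i-1}$ one dimension at a time through the subdivision $\tilde T_i$. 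Everything else — the geometric estimates and the limit argument — is then straightforward bookkeeping of the $2^{-i}$ bounds.
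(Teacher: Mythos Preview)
Your overall strategy is sound but differs from the paper's. You build a uniformly Cauchy sequence of homeomorphisms $\hat f_i$ and pass to the limit. The paper never constructs such auxiliary homeomorphisms; instead it defines set-valued maps $F_i\colon M\to \mathrm{Closed}(M)$ by $F_i(x)=\{f_i(x)\}$ for $x\in T_i$ and $F_i(x)=$ the closed $n$-cell of $\tilde Q_i$ containing $f_i(x)$ otherwise, verifies the hypotheses of the Fort-type Proposition~\ref{fort} (nestedness, diameters $\to 0$, a semicontinuity condition), and obtains the continuous limit $f$ directly. Bijectivity is then argued by running the same construction on the $f_i^{-1}$ to produce $g$ and checking $g\circ f=\mathrm{id}$ on the dense set $\bigcup_i T_i$. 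This bypasses exactly the step you flag as the ``main obstacle'': in fact each $f_i$ in the paper is already a self-homeomorphism of $M$, so you may simply take $\hat f_i=f_i$ and no promotion or cell-by-cell extension is needed.

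The genuine gap in your argument is the estimate, specifically the clause ``both $\hat f_i(x)$ and $\hat f_{i-1}(x)$ lie in $\hat f_{i-1}(\sigma)$''. For this you need $\hat f_i(\sigma)\subseteq \hat f_{i-1}(\sigma)$ for every $n$-cell $\sigma$ of $\tilde T_{i-1}$. But since you also require $\hat f_i|_{T_i}=f_i|_{T_i}$ (used in your last paragraph), the set $\hat f_i(\sigma)$ is forced to be the union of the $\tilde Q_i$-cells $f_i(\tau)$ over those $\tau\in\tilde T_i$ with $\tau\subseteq\sigma$, and nothing you have written shows this equals $f_{i-1}(\sigma)$. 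From $f_i|_{T_{i-1}}=f_{i-1}|_{T_{i-1}}$ one gets $f_i(T_{i-1})=Q_{i-1}$, so $f_i$ carries each open $n$-cell of $\tilde T_{i-1}$ onto some open $n$-cell of $\tilde Q_{i-1}$ with boundary $f_i(\partial\sigma)=\partial f_{i-1}(\sigma)$; but two $n$-cells of $\tilde Q_{i-1}$ can share their entire boundary (e.g.\ $S^n$ with two hemispheres), so this does not pin down $f_i(\sigma)$. The paper's proof passes over the same point when it asserts $F_{i+1}(x)\subseteq F_i(x)$; in either approach the honest fix is to note that once distinct $n$-cells of $\tilde Q_{i-1}$ have distinct boundaries (automatic as soon as there are at least three top cells, hence after the first subdivision), the cell containing $f_i(\mathrm{int}\,\sigma)$ is determined by its boundary and equals $f_{i-1}(\sigma)$.
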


\begin{proof}Let $F_{i}: M \rightarrow Closed(M)$ defined by $F_{i}(x) = \{f_{i}(x)\}$ if $x \in T_{i}$ and if $x \notin T_{i}$, $F_{i}(x)$ is the $n$-cell of $\tilde{Q}_{i}$ that contains $f_{i}(x)$ (it is unique since $f_{i}$ is a cellular isomorphism, which implies that $f_{i}(x)$ is not on the boundary of a $n$-cell). By the hypothesis on $\{\tilde{Q}_{i}\}_{i \in \N}$, we have that $\forall x \in M$, $\lim\limits_{i \in \N}diam(F_{i}(x)) = 0$. Since $\forall i \in \N$, $f_{i+1}$ extends $f_{i}|_{T_{i}}$ and $\tilde{T}_{i+1}$ is a subdivision of $\tilde{T_{i}}$, it follows that $\forall i \in \N$, $\forall x \in M$, $F_{i+1}(x) \subseteq F_{i}(x)$. Let $x \in M$, $i \in \N$ and $U$ be a neighbourhood of $F_{i}(x)$ (we can suppose that $U$ is open). Let $i'> i$ such that every $n$-cell in $\tilde{Q}_{i'}$ that intersects $F_{i}(x)$ is contained in $U$. Let $V = \bigcup \{D \in \tilde{T}_{i'}:  f_{i'}(D) \subseteq U\}$. Let $D$ be a $n$-cell of $\tilde{T}_{i'}$ such that $x \in D$. Then $f_{i'}(D) \subseteq U$ (by the choice of $i'$), which implies that $D \subseteq V$. Since $D$ is arbitrary, we get that $V$ is a neighbourhood of $x$. Let $x' \in V$. By the construction of $V$ we have that there exists a $n$-cell $D'$ in $\tilde{T}_{i'}$ such that $x' \in D'$ and $f_{i'}(D') \subseteq U$. In any case we have that $F_{i'}(x') \subseteq f_{i'}(D')$, which implies that $F_{i'}(x') \subseteq U$. Then, by \textbf{Proposition \ref{fort}}, there is a continuous map $f: M \rightarrow M$ defined by $\{f(x)\} = \bigcap_{i \in \N} F_{i}(x)$. Since $\forall i \in \N$, $\forall x \in T_{i}$, $F_{i}(x) = \{f_{i}(x)\}$, we have that $\forall i \in \N$, $f$ extends $f_{i}|_{T_{i}}$.

Let $T = \bigcup_{i \in \N} T_{i}$ and $Q = \bigcup_{i \in \N} Q_{i}$. Analogously to the last paragraph, the sequence $\{f_{i}^{-1}\}_{i \in \N}$ converges uniformly to a homeomorphism $g: M \rightarrow M$ such that $\forall i \in \N$, $g|_{Q_{i}} = f^{-1}_{i}|_{Q_{i}}$.  It is clear that $g|_{Q}\circ f|_{T} = id_{T}$ and $f|_{T} \circ g|_{Q} = id_{Q}$. Since $T$ and $Q$ are dense in $M$, we have that $g\circ f = id_{M}$ and $f \circ g = id_{M}$. Thus $f$ is a homeomorphism.
\end{proof}

\section{Sierpi\'nski carpet}
\label{sierpinskicarpetsection}

Consider the Menger space $M^{n}_{n-1}$. It is the Sierpi$\acute{n}$ski carpet of dimension $n-1$. The space $M^{n}_{n-1}$ is constructed by taking $S^{n}$ and removing a family of disjoint open tame balls $\{C_{i}\}_{i \in \N}$ satisfying:

\begin{enumerate}
    \item $\bigcup_{i \in \N} \bar{C_{i}}$ is dense in $S^{n}$.
    \item $\forall \epsilon > 0$, the set $\{i \in \N: diam \  \bar{C_{i}} > \epsilon\}$ is finite.
\end{enumerate}

From now on we are always regarding the Sierpi$\acute{n}$ski carpet of dimension $n-1$ as a subset of $S^{n}$ such as described above.

For $n = 1$ this space is clearly a Cantor set. Whyburn showed (Theorem 3 of \cite{Wh}) that $M^{2}_{1}$ doesn't depend of the choice of the family $\{C_{i}\}_{i \in \N}$, and Cannon showed it for arbitrary $n \neq 4$ (Theorem 1 of \cite{Ca}). Cannon's proof also works in dimension $4$, as we discuss on the \textbf{Appendix}.

Consider the equivalence relation $\sim = \Delta M^{n}_{n-1} \cup \bigcup_{i \in \N} \partial C_{i}^{2}$. The second property of the construction of $M^{n}_{n-1}$ is equivalent to say that the quotient map $\varpi: M^{n}_{n-1} \rightarrow M^{n}_{n-1}/ \sim$ is topologically quasiconvex.

\begin{prop}\label{decompositiontheorem} $M^{n}_{n-1} / \! \sim$ is homeomorphic to $S^{n}$.
\end{prop}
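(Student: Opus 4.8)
The plan is to obtain the homeomorphism directly from the \textbf{Decomposition Theorem}, applied to the relation on $S^{n}$ that collapses the \emph{closed} balls $\bar{C_{i}}$. Write $\approx \,=\, \Delta S^{n} \cup \bigcup_{i \in \N} \bar{C_{i}}^{2}$. Every class of $\approx$ is either a one-point set or a closed tame ball (recall $\bar{C_{i}} = Cl(C_{i})$ is a closed tame ball), hence closed; and since condition 2 says that for each $\epsilon > 0$ only finitely many $\bar{C_{i}}$ have diameter larger than $\epsilon$, for every entourage $u$ all but finitely many classes of $\approx$ are $u$-small, i.e. $\approx$ is topologically quasiconvex, just as $\sim$ is on $M^{n}_{n-1}$. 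Applying the \textbf{Decomposition Theorem} with $U = S^{n}$ yields a continuous surjection $f \colon S^{n} \rightarrow S^{n}$ such that each $\bar{C_{i}}$ is saturated, say $\bar{C_{i}} = f^{-1}(p_{i})$ with $p_{i}$ a single point, and $f^{-1}(x)$ is a single point for every $x \notin f(\bigcup_{i} \bar{C_{i}})$. The fibres of $f$ are therefore exactly the classes of $\approx$: the sets $\bar{C_{i}}$, which correspond to pairwise distinct points $p_{i}$ (from $p_{i} = p_{j}$ we would get $\bar{C_{i}} = f^{-1}(p_{i}) = f^{-1}(p_{j}) = \bar{C_{j}}$, impossible since the closures are disjoint and nonempty), together with the singletons.

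Next I would restrict $f$ to $M^{n}_{n-1} = S^{n} - \bigcup_{i} C_{i}$. This restriction is still surjective: if $f^{-1}(x) = \{z\}$ is a singleton fibre, then $z$ lies in no $C_{j}$ (otherwise $z \in \bar{C_{j}}$ and the fibre through $z$ would not be a singleton), so $z \in M^{n}_{n-1}$; and each remaining value $p_{i} = f(\bar{C_{i}}) = f(\partial C_{i})$ is attained on $\partial C_{i} \subseteq M^{n}_{n-1}$. Moreover, for $x \in M^{n}_{n-1}$ the set $f^{-1}(f(x)) \cap M^{n}_{n-1}$ is a singleton when $f(x) \neq p_{i}$ for all $i$, and equals $\bar{C_{i}} \cap M^{n}_{n-1} = \partial C_{i}$ when $f(x) = p_{i}$. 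Hence the fibres of $f|_{M^{n}_{n-1}}$ are precisely the classes of $\sim$.

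Finally, $M^{n}_{n-1}$ is closed in $S^{n}$, hence compact, and $S^{n}$ is Hausdorff, so $f|_{M^{n}_{n-1}} \colon M^{n}_{n-1} \rightarrow S^{n}$ is a closed continuous surjection, i.e. a quotient map. Since its fibres are the $\sim$-classes, it factors through $\varpi$ as a continuous bijection $M^{n}_{n-1}/\!\sim \,\longrightarrow S^{n}$, which is then a homeomorphism, being a continuous bijection from a compact space onto a Hausdorff space. The one place that needs genuine care is the first step: verifying that $\approx$ really satisfies the quasiconvexity hypothesis of the \textbf{Decomposition Theorem}, i.e. translating the statement "diameters of the $\bar{C_{i}}$ shrink to $0$" into the entourage formulation of topological quasiconvexity on $S^{n}$, together with the bookkeeping confirming that $f|_{M^{n}_{n-1}}$ has no fibres other than the sets $\partial C_{i}$ and singletons. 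Everything else is formal.
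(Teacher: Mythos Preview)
Your proposal is correct and is essentially the paper's own proof; the paper dispatches this proposition in a single line, ``This is a special case of the \textbf{Decomposition Theorem}'', and what you have written is precisely the unpacking of that sentence. Your verification that $\approx$ is topologically quasiconvex, your restriction of $f$ to $M^{n}_{n-1}$, and your identification of the fibres of $f|_{M^{n}_{n-1}}$ with the classes of $\sim$ are all the routine details the paper leaves implicit.
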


\begin{proof}This is a special case of the \textbf{Decomposition Theorem}.
\end{proof}

So we can identify $M^{n}_{n-1} / \! \sim$ with the $n$-sphere.

For $A \subseteq S^{n}$, let $\sim_{A} = \Delta M^{n}_{n-1} \cup \bigcup_{\varpi(C_{i}) \notin A} \partial C_{i}^{2}$. Since $\varpi$ is a topologically quasiconvex map, it follows that $\forall A \subseteq S^{n}$,  $M^{n}_{n-1} / \! \sim_{A}$ is Hausdorff. Let $A \subseteq A'$. Consider also the quotient maps given by $\varpi_{A}: M^{n}_{n-1} \rightarrow  M^{n}_{n-1}/ \! \sim_{A}$, $\varpi_{A',A}: M^{n}_{n-1}/ \! \sim_{A'} \rightarrow  M^{n}_{n-1}/ \! \sim_{A}$ and $\varpi'_{A}: M^{n}_{n-1}/ \! \sim_{A} \rightarrow  S^{n}$. If $a \in S^{n}$ and $A \subseteq S^{n}$, we use $\varpi_{a}$, $\varpi_{A,a}$ and $\varpi'_{a}$ instead of $\varpi_{\{a\}}$, $\varpi_{A,\{a\}}$ and $\varpi'_{\{a\}}$.

There is also a quotient map $\tilde{\varpi}: S^{n} \rightarrow S^{n}$ such that $\tilde{\varpi}|_{M^{n}_{n-1}} = \varpi$ and $\forall i \in \N$, $\tilde{\varpi}(C_{i}) = \varpi(\partial C_{i})$. Analogously, if  $A \subseteq S^{n}$, then there is also a natural quotient map  $\tilde{\varpi}_{A}:  S^{n} - \bigcup_{\varpi(C_{i}) \notin A} C_{i} \rightarrow M^{n}_{n-1}/\sim_{A}$.

\begin{prop}For every finite set $A$, $M^{n}_{n-1} / \! \sim_{A}$ is homeomorphic to $S^{n}$ minus $\#A$ disjoint open tame balls.
\end{prop}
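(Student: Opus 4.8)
The plan is to induct on $\#A$ and to identify, inside $M^n_{n-1}/\!\sim_A$, the images of the collapsed balls as a finite collection of disjoint tame spheres, then appeal to the characterization results of the previous subsection. First I would treat the case $\#A = \{a\}$ a single point: writing $\varpi(C_j) = a$ for the unique index $j$ with $\varpi(C_j)\in A$, the relation $\sim_a$ collapses every $\partial C_i$ with $i\neq j$ but leaves $\partial C_j$ untouched. Using the factorization through $\tilde\varpi_a$ and $\varpi'_a$, I would argue that $M^n_{n-1}/\!\sim_a$ is obtained from $S^n$ (via $\varpi'_a$) by ``re-inflating'' the single point $a$ back into the sphere $\partial C_j$; more precisely, the composite $\varpi'_a\circ\tilde\varpi_a\colon S^n - \bigcup_{i\neq j} C_i \to S^n$ is, by the Decomposition Theorem, a quotient map that collapses only the tame balls $\overline{C_i}$, $i\neq j$, hence restricts to a homeomorphism on a neighborhood of $\overline{C_j}$; thus $M^n_{n-1}/\!\sim_a$ contains a tame copy of $S^{n-1}$ (the image of $\partial C_j$) whose two complementary components are: the collapsed image of $\overline{C_j}$'s outside (a disc, since $S^n$ minus one open tame ball is a closed ball) together with the un-collapsed interior of $C_j$ — so the upshot is $M^n_{n-1}/\!\sim_a \cong S^n$ minus one open tame ball. (Here one must check tameness of the image sphere, which follows because $\partial C_j$ already has a bicollar in $S^n - \bigcup_{i\neq j}C_i$ that survives the quotient.)

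For the inductive step, given $A = A' \cup \{a\}$ with $a\notin A'$, I would use the composite quotient map $\varpi_{A,A'}\colon M^n_{n-1}/\!\sim_A \to M^n_{n-1}/\!\sim_{A'}$, which is exactly the map that re-collapses the single sphere $\partial C_j$ (where $\varpi(C_j) = a$) to a point. By the induction hypothesis $M^n_{n-1}/\!\sim_{A'} \cong S^n$ minus $\#A'$ open tame balls, and inside it the point to which $\partial C_j$ gets sent lies in the interior, away from the missing balls. Reversing the argument of the base case locally near that point — again via the Decomposition Theorem applied on the sphere $M^n_{n-1}/\!\sim_{A'}\cup(\text{missing balls})$ — shows that $M^n_{n-1}/\!\sim_A$ is obtained by inflating that interior point into a tame $S^{n-1}$, hence equals $S^n$ minus $\#A' + 1 = \#A$ open tame balls. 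Alternatively, and perhaps more cleanly, one can do everything in one step: apply the Decomposition Theorem directly to $M^n_{n-1}$ with respect to the family $\{\overline{C_i} : \varpi(C_i)\notin A\}$ (which is topologically quasiconvex as a subfamily of a quasiconvex family, and which one can enclose in an open set $U$ disjoint from the finitely many $\overline{C_i}$ with $\varpi(C_i)\in A$), obtaining a map $S^n\to S^n$ that is the identity near those finitely many balls and collapses only the others; this map descends to identify $M^n_{n-1}/\!\sim_A$ with $S^n$ minus the images of the finitely many surviving $\partial C_i$, each of which is tame because the map is a homeomorphism near it.

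The main obstacle I anticipate is the tameness bookkeeping: one needs that the surviving spheres $\partial C_i$ (for $\varpi(C_i)\in A$) are not merely embedded copies of $S^{n-1}$ in the quotient but are \emph{tame} there, and that they remain pairwise disjoint with disjoint closed complementary balls. Disjointness is inherited from the disjointness of the $\overline{C_i}$ in $S^n$ together with the fact that the quotient map is injective on a neighborhood of each surviving ball. Tameness is the delicate point, and the clean way to get it is to observe that each surviving $\partial C_i$ carries a bicollar in $S^n$ (since it is the boundary of a tame ball, the Generalized Schoenflies Theorem gives an embedding $\partial C_i\times[-1,1]\hookrightarrow S^n$), that this bicollar can be chosen inside the region where the decomposition map is a homeomorphism, and hence descends to a bicollar of the image sphere in $M^n_{n-1}/\!\sim_A$; applying the Generalized Schoenflies Theorem again (in the ambient sphere $\varpi'_A{}^{-1}$-reconstruction, i.e. $M^n_{n-1}/\!\sim_A$ with the $\#A$ balls filled back in) yields tameness. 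Once tameness and disjointness are in hand, Proposition \ref{characterizationofsphereswithoutballs} and the Remark following it finish the identification, since the homeomorphism type of $S^n$ minus $\#A$ open tame balls is independent of the choice.
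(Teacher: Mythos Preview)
Your proposal is correct, and the ``cleaner'' alternative you sketch at the end is exactly the paper's proof: the paper sets $Z = M^{n}_{n-1}\cup\bigcup_{\varpi(C_i)\in A} C_i$, observes via the Decomposition Theorem that $Z/\!\approx_A \cong S^{n}$, and then reads off $M^{n}_{n-1}/\!\sim_A = (Z - \bigcup_{\varpi(C_i)\in A} C_i)/\!\approx_A$ as $S^{n}$ minus $\#A$ open tame balls. Your inductive argument is valid too, but the one-step application of the Decomposition Theorem makes the induction superfluous. On the tameness point you were right to flag, note that the Decomposition Theorem already lets you take the collapse map to be the identity outside an open $U$ missing the finitely many surviving $\bar C_i$, so tameness of the image balls is immediate and your bicollar/Schoenflies argument, while correct, is heavier than needed. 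One small wording slip: where you write ``$S^{n}$ minus the images of the finitely many surviving $\partial C_i$'' you mean minus the images of the surviving \emph{open balls} $C_i$.
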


\begin{obs}By \textbf{Proposition \ref{characterizationofsphereswithoutballs}}, the space $S^{n}$ minus $\#A$ disjoint open tame balls is well defined, up to homeomorphisms.
\end{obs}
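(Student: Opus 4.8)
The plan is to deduce the statement directly from \textbf{Proposition \ref{characterizationofsphereswithoutballs}}. Write $m=\#A$, and let $C_{1},\dots,C_{m}$ and $D_{1},\dots,D_{m}$ be two families of pairwise disjoint open tame balls of $S^{n}$, with the convention (as everywhere in this paper) that ``disjoint'' is read so that the closures $\bar C_{1},\dots,\bar C_{m}$ and $\bar D_{1},\dots,\bar D_{m}$ are pairwise disjoint; these closures are then disjoint closed tame balls. The goal is to produce a homeomorphism carrying $S^{n}-(C_{1}\cup\dots\cup C_{m})$ onto $S^{n}-(D_{1}\cup\dots\cup D_{m})$, which is exactly the asserted independence of the homeomorphism type from the chosen balls.

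First I would fix an orientation of $S^{n}$. For each $i$ choose any homeomorphism $\bar C_{i}\to\bar D_{i}$ (all closed $n$-balls are homeomorphic); since a closed $n$-ball carries an orientation-reversing self-homeomorphism, we may precompose with one whenever needed in order to obtain, for every $i$, a homeomorphism $f_{i}: \bar C_{i}\to\bar D_{i}$ that preserves the orientation induced from $S^{n}$. Then \textbf{Proposition \ref{characterizationofsphereswithoutballs}}, applied to the families $\{\bar C_{i}\}_{i}$, $\{\bar D_{i}\}_{i}$ and the maps $f_{i}$, yields a homeomorphism $f: S^{n}\to S^{n}$ with $f|_{\bar C_{i}}=f_{i}$, hence $f(\bar C_{i})=\bar D_{i}$, for all $i$, which by the conclusion of that proposition restricts to a homeomorphism $S^{n}-(\bar C_{1}\cup\dots\cup\bar C_{m})\to S^{n}-(\bar D_{1}\cup\dots\cup\bar D_{m})$.

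The one remaining point — and the only one that is not immediate — is the passage from the closed balls $\bar C_{i}$ back to the open ones $C_{i}$. Since $C_{i}$ is an open tame ball, $\partial C_{i}=\bar C_{i}\setminus C_{i}$ is a bicollared $(n-1)$-sphere in $S^{n}$ (a standard feature of tame balls, coming from the \textbf{Generalized Schoenflies Theorem}), so every point of $\partial C_{i}$ has arbitrarily small neighbourhoods meeting $S^{n}-\bar C_{i}$; it follows that $C_{i}=int(\bar C_{i})$ (interior in $S^{n}$), and likewise $D_{i}=int(\bar D_{i})$. Because $f$ is a homeomorphism of $S^{n}$ with $f(\bar C_{i})=\bar D_{i}$, it carries interiors to interiors, so $f(C_{i})=D_{i}$ for each $i$, whence $f(S^{n}-(C_{1}\cup\dots\cup C_{m}))=S^{n}-(D_{1}\cup\dots\cup D_{m})$. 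This gives the homeomorphism we wanted. In short, the statement is the case ``$\#A$ in place of $n$'' of the remark following \textbf{Proposition \ref{characterizationofsphereswithoutballs}}: all of the substantive content (ultimately the \textbf{Annulus Theorem}) is already contained there, and the only thing to check here is the elementary interior/boundary bookkeeping just described.
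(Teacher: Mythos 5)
Your argument is correct and is essentially the one the paper intends: the remark is a direct citation of Proposition \ref{characterizationofsphereswithoutballs}, and you simply unwind it, choosing orientation-preserving homeomorphisms $\bar C_i\to\bar D_i$, invoking the proposition to extend them to a self-homeomorphism of $S^n$, and then passing from closed to open balls via the bicollar/interior bookkeeping. The extra details you supply (adjusting by an orientation-reversing self-homeomorphism of a ball when needed, and checking $C_i=int(\bar C_i)$ so that $f(C_i)=D_i$) are exactly the points the paper leaves implicit, so this is the same route spelled out rather than a genuinely different one.
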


\begin{proof}Let $Z = M^{n}_{n-1}\cup \bigcup_{i \in A} C_{i}$. By the Decomposition Theorem $Z/\!\approx_{A}$ is homeomorphic to $S^{n}$, where $\approx_{A} = \sim_{A} \cup \Delta Z$. So $M^{n}_{n-1}/\!\sim_{A} = (Z - \bigcup_{i \in A} C_{i})/\! \approx_{A}$ is homeomorphic to $S^{n}$ minus $\#A$ disjoint open tame balls.
\end{proof}

Note that the last proposition and \textbf{Proposition \ref{quasiconvexity}} implies that the space $M^{n}_{n-1}$ is homeomorphic to an inverse limit of $n$-balls that quotient to the $n$-sphere collapsing their boundaries and such that the set of points of the $n$-sphere that are the image of some collapsing boundary is countable dense. It remains to prove that such inverse limit is unique, up to homeomorphisms.

\begin{defi}We say that a triangulation $\tilde{T}$ of $S^{n}$ with $n-1$-skeleton $T$ is compatible with the Sierpi\'nski carpet that we fixed, if:

\begin{enumerate}
    \item $T \subseteq  M^{n}_{n-1}$
    \item $\forall i \in \N$, $T\cap \partial C_{i} = \emptyset$ or $\partial C_{i} \subseteq T$.
    \item $T$ intersects only a finite number of spheres of $\{\partial C_{i}\}_{i \in \N}$.
    \item $\tilde{T}$ induces a triangulation of $S^{n} - \bigcup\{C_{i}: \partial C_{i} \subseteq T\}$ 
\end{enumerate} 
\end{defi}

\begin{lema}Let $\tilde{T}$ be a compatible triangulation of $S^{n}$ and $T$ its $n-1$-skeleton. So $\forall A \subseteq S^{n}$, finite, $\tilde{\varpi}_{A}(\tilde{T}) = \{\tilde{\varpi}_{A}(B): B \in \tilde{T}, B \subseteq  S^{n} - \bigcup_{i \in A} C_{i} \}$ is a CW structure of $M^{n}_{n-1} / \sim_{A}$ with its $n-1$-skeleton given by $\varpi_{A}(T)$.
\end{lema}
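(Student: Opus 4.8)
The statement claims that, given a compatible triangulation $\tilde{T}$ of $S^n$ with $(n-1)$-skeleton $T$, the image $\tilde{\varpi}_A(\tilde{T})$ is a CW structure on $M^n_{n-1}/\!\sim_A$ whose $(n-1)$-skeleton is $\varpi_A(T)$. My plan is to verify directly that $\tilde{\varpi}_A$ restricted to the relevant subcomplex is a bijective cellular map on each cell and that the collection of images forms a legitimate CW decomposition. First I would recall that, by the definition of compatibility, $\tilde{T}$ induces a triangulation of $S^n - \bigcup\{C_i : \partial C_i \subseteq T\}$, and that $T$ meets only finitely many of the spheres $\partial C_i$; enlarging $A$ if necessary (or rather, noting the argument is uniform in $A$), I would observe that the map $\tilde{\varpi}_A$ collapses, for each $i$ with $\varpi(C_i) \notin A$, the closed ball $Cl_{S^n}(C_i)$ to the point $\varpi(\partial C_i)$, and is injective elsewhere. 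Since $T \cap \partial C_i = \emptyset$ or $\partial C_i \subseteq T$ for every $i$, no cell of $\tilde T$ is partially collapsed: a cell $B$ of $\tilde T$ is either disjoint from all the collapsed balls (hence mapped homeomorphically), or it is one of the finitely many cells making up some $\partial C_i$ (for $i \in A$, which survive; for $\varpi(C_i) \notin A$, these get identified together with the others in the same sphere).

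The key steps, in order, are: (1) Show $\tilde{\varpi}_A$ is injective on the interior of each cell $B \in \tilde{T}$ with $B \subseteq S^n - \bigcup_{i \in A} C_i$, so that $\tilde{\varpi}_A|_B$ is a homeomorphism onto its image except possibly on the boundary; here I use that a cell $B$ can only meet a collapsed ball $Cl(C_i)$, $\varpi(C_i)\notin A$, inside $\partial C_i \subseteq T$, and the collapse is constant on each such sphere. (2) Check that the images $\{\tilde{\varpi}_A(B)\}$ are the cells of a CW complex: the characteristic maps are the compositions of the original characteristic maps with $\tilde{\varpi}_A$, the skeleta are nested correctly because $\tilde\varpi_A$ is cellular, and closure-finiteness and the weak topology are inherited from $\tilde T$ because $\tilde\varpi_A$ is a quotient map of compact Hausdorff spaces (using Proposition \ref{quasiconvexity} and the earlier propositions, $M^n_{n-1}/\!\sim_A$ is Hausdorff and in fact homeomorphic to $S^n$ minus $\#A$ tame balls). (3) Identify the $(n-1)$-skeleton: a cell $\tilde\varpi_A(B)$ has dimension $\le n-1$ iff $B$ does, so the $(n-1)$-skeleton of $\tilde\varpi_A(\tilde T)$ is $\tilde\varpi_A(T \cap (S^n - \bigcup_{i\in A} C_i)) = \varpi_A(T)$, the last equality since $T \subseteq M^n_{n-1}$ so $T$ misses every $C_i$ and $\tilde\varpi_A$ agrees with $\varpi_A$ on $M^n_{n-1}$, minus the finitely many spheres collapsed.

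The main obstacle I expect is step (1): verifying that $\tilde{\varpi}_A$ does not fold or identify parts of a single cell in a way that destroys the cell structure. The delicate point is that an $n$-cell $B$ of $\tilde T$ might have several of its $(n-1)$-faces lying on the \emph{same} collapsed sphere $\partial C_i$, or might touch $\partial C_i$ along a proper subcomplex of its boundary while its interior stays outside $C_i$; I need condition 4 of compatibility (that $\tilde T$ induces a triangulation of $S^n - \bigcup\{C_i : \partial C_i \subseteq T\}$) to guarantee that each such $C_i$ is a union of $n$-cells of $\tilde T$, so that $\partial C_i$ is a full subcomplex and the interior of any $n$-cell $B \subseteq S^n - \bigcup_{i\in A} C_i$ is disjoint from $Cl(C_i)$ for every collapsed $i$. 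Granting this, $\tilde\varpi_A$ restricted to $B$ is injective on $\mathrm{int}(B)$ and the only identifications on $\partial B$ are those dictated by the (already cellular) sphere collapses, which is exactly what a CW quotient requires. The remaining verifications — closure-finiteness, the weak topology, and the skeleton computation — are then routine consequences of $\tilde\varpi_A$ being a cellular quotient map between compact spaces.
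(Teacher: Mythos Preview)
There is a genuine gap in your argument, and it lies precisely where you identified the ``main obstacle''. Your claim that $\tilde\varpi_A$ is injective on the interior of each $n$-cell $B$ is false. You argue that condition~4 of compatibility makes each $C_i$ a union of $n$-cells of $\tilde T$, so that $\mathrm{int}(B)$ is disjoint from $Cl(C_i)$ for every collapsed $i$; but condition~4 only concerns the finitely many indices $i$ with $\partial C_i\subseteq T$. The set of \emph{collapsed} indices is $\{i:\varpi(C_i)\notin A\}$, which is cofinite, and for each of the infinitely many $i$ with $T\cap\partial C_i=\emptyset$ the entire closed ball $\bar C_i$ sits inside the interior of some $n$-simplex of $\tilde T$ (indeed $\bigcup_i\bar C_i$ is dense in $S^n$). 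Hence every $n$-cell $B$ not contained in one of the finitely many $C_i$ with $\partial C_i\subseteq T$ carries infinitely many collapsed balls in its interior, and $\tilde\varpi_A|_{\mathrm{int}(B)}$ is highly non-injective.

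What is missing is exactly the step the paper supplies: one must show that, despite this collapsing, $\tilde\varpi_A(\mathrm{int}\,B)$ is still homeomorphic to an open $n$-ball, so that $\tilde\varpi_A(B)$ can serve as an $n$-cell attached to the image of the $(n-1)$-skeleton. This is not automatic; it is a consequence of the \textbf{Decomposition Theorem} (Meyer/Moore), which says that collapsing a null sequence of disjoint tame closed balls in $S^n$ yields $S^n$ again. The paper handles the skeleta of dimension $\leqslant n-1$ as you do (quotienting by the finitely many subcomplexes $\partial C_i\subseteq T$ with $\varpi(C_i)\notin A$), then invokes the Decomposition Theorem for each surviving $n$-cell to obtain the characteristic maps in the quotient. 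Your outline is correct in spirit for the lower skeleta and for the formal CW verifications, but the treatment of the top cells cannot be completed without this extra ingredient.
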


\begin{proof}Let $\tilde{W}$ be the CW structure of $S^{n}$ given by the triangulation $\tilde{T}$ and, for each $k \in \N$, let $W^{k}$ be its $k$-skeleton.  We have that $W^{k}$ coincides with the $k$-skeleton of $\tilde{T}$. For $i \in \N$, let $Y_{i}^{k} = \{B \in W^{k}: B \subseteq \partial C_{i}\}$. We have that $Y_{i}^{k}$ is a subcomplex of $W^{k}$. If $k \leqslant n-1$, then the quotient $\tilde{\varpi}_{A}(W^{k})$ has the CW structure given by $W^{k}$ quotiented by all subcomplexes $\{Y_{i}^{k}: \varpi(C_{i}) \notin A,\}$ (note that there is just a finite number of sets $Y_{i}^{k}$ that are not singletons). Let $B$ be a cell in $W^{n}$. If $B \cap C_{i} \neq \emptyset$ for some $i \in \N$ such that $\varpi(C_{i}) \notin A$, then $B \subseteq C_{i}$ and then we discard it. If $\forall i \in \N$ such that $\varpi(C_{i}) \subseteq A$, $B \cap C_{i} = \emptyset$, then $\tilde{\varpi}_{A}(int \ B)$ is homeomorphic to $int \ B$ by the \textbf{Decomposition Theorem}. So $\tilde{\varpi}_{A}(B)$ is a cell that we attach to $\tilde{\varpi}_{A}(W^{k})$. Doing this for every $n$-cell $B$, we get the cellular structure of the quotient.
\end{proof}

\begin{obs}Note that the induced structure on the quotient may not be a triangulation, since some $n-1$-faces, that are contained in some $\partial C_{i}$ such that $\varpi(C_{i}) \notin A$, must collapse.
\end{obs}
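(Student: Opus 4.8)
To justify the remark the plan is to show that, precisely in the situation where collapsing really happens, the CW structure $\tilde\varpi_A(\tilde T)$ produced by the previous lemma fails the defining property of a triangulation: it has a closed $n$-cell that is not (combinatorially) a simplex, because an entire $(n-1)$-face of the triangulation has been crushed to a point. The situation in question is exactly that $T$ contains some sphere $\partial C_{i_0}$ with $\varpi(C_{i_0})\notin A$ (this is what it means for an $(n-1)$-face of $\tilde T$ to lie in such a $\partial C_i$), and this is the content of the word ``may'': when no $(n-1)$-face of $\tilde T$ lies on a collapsed sphere, nothing on $T$ is identified by $\sim_A$ beyond removing the balls indexed by $A$, and the issue does not arise. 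So assume such an $i_0$ is fixed, with $n\geq 2$.

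First I would locate the offending cell. Since $\partial C_{i_0}\subseteq T$, the sphere $\partial C_{i_0}$ is a subcomplex of $\tilde T$ homeomorphic to $S^{n-1}$; in particular it contains an $(n-1)$-simplex $\sigma$, and no $n$-simplex of $\tilde T$ straddles $\partial C_{i_0}$. As $\sigma$ is an $(n-1)$-simplex in the triangulated closed manifold $S^n$, it is a face of exactly two $n$-simplices of $\tilde T$, one of which, say $B'$, lies in $Cl(C_{i_0})$ and is discarded in the construction of $\tilde\varpi_A(\tilde T)$ (it meets $C_{i_0}$ and $\varpi(C_{i_0})\notin A$), while the other, call it $B$, lies in $S^n-int(C_{i_0})$. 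Because the closures $Cl(C_j)$ are pairwise disjoint, $B\subseteq Cl(C_j)$ would force $\emptyset\neq\sigma\subseteq Cl(C_j)\cap\partial C_{i_0}$, hence $j=i_0$, contradicting $B\not\subseteq Cl(C_{i_0})$; so $B$ is contained in no removed ball, it survives, and $\tilde\varpi_A(B)$ is a closed $n$-cell of $\tilde\varpi_A(\tilde T)$ with $\varpi_A(\sigma)$ among the images of its faces.

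Finally I would count vertices. The relation $\sim_A$ identifies all of $\partial C_{i_0}$ to the single point $\varpi(C_{i_0})$, which is a $0$-cell of $\varpi_A(T)$; hence the $n$ vertices of $\sigma$ all map to this one $0$-cell, while the unique vertex of $B$ not lying on $\sigma$ maps to at most one further $0$-cell. Therefore $Cl(\tilde\varpi_A(B))$ contains at most two vertices of $\varpi_A(T)$, whereas the closure of an $n$-simplex in a triangulation has exactly $n+1\geq 3$ vertices; so $\tilde\varpi_A(B)$ is not a simplex and $\tilde\varpi_A(\tilde T)$ is not a triangulation. The only delicate point—and the closest thing to an obstacle—is the bookkeeping needed to know that $B$ is an honest surviving $n$-cell: that no $n$-simplex of $\tilde T$ crosses a sphere $\partial C_i$ that belongs to $T$, and that $B$ cannot be swallowed by a removed or collapsed ball. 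Both follow routinely from the definition of a compatible triangulation together with the pairwise disjointness of the closures $Cl(C_i)$.
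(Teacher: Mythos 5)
The paper states this remark without proof, merely gesturing at the reason (collapse of $(n-1)$-faces on $\partial C_{i_0}$); your argument is a correct and careful justification of exactly that reason, identifying a surviving $n$-cell $\tilde\varpi_A(B)$ adjacent to a collapsed $(n-1)$-face and observing that its closure contains at most two $0$-cells whereas an $n$-simplex in a triangulation must have $n+1\geq 3$. The bookkeeping (no $n$-simplex straddles $\partial C_{i_0}$ by compatibility, $B$ is not swallowed by any $Cl(C_j)$ because the $Cl(C_j)$ are pairwise disjoint and $B$ has a face on $\partial C_{i_0}$) is sound, so this is a valid proof of the remark in the spirit the paper intends.
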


Analogously, we have:

\begin{lema}\label{triangulationquotient}Let $\tilde{T}$ be a compatible triangulation of $S^{n}$ and $T$ its $n-1$-skeleton. Let $A = \bigcup\{\varpi(\partial C_{i}): \partial C_{i} \subseteq T\}$. So $\tilde{\varpi}_{A}(\tilde{T}) = \{\tilde{\varpi}_{A}(B): B \in \tilde{T}, B \subseteq  S^{n} - \bigcup_{i \in A} C_{i} \}$ is a triangulation of $M^{n}_{n-1} / \sim_{A}$ with its $n-1$-skeleton given by $\varpi_{A}(T)$. \eod
\end{lema}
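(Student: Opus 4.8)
The plan is to re-run the construction from the previous lemma with this particular finite set $A$ --- finite by the third condition in the definition of a compatible triangulation --- and then observe that, for this choice, nothing on the $(n-1)$-skeleton gets collapsed, so the CW structure produced there is in fact simplicial. Note first that $\varpi$ sends each $\partial C_i$ to a single point and these points are distinct for distinct $i$ (the $C_i$ have pairwise disjoint closures), so $\varpi(C_i) \in A$ if and only if $\partial C_i \subseteq T$.

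So I would begin by invoking the previous lemma to get that $\tilde\varpi_A(\tilde T)$ is a CW structure on $M^n_{n-1}/\sim_A$ with $(n-1)$-skeleton $\varpi_A(T)$, recalling from its proof that, for $k \leqslant n-1$, the $k$-skeleton is $W^k$ with each subcomplex $Y_i^k = \{B \in W^k : B \subseteq \partial C_i\}$, $\varpi(C_i) \notin A$, collapsed. The point is that $\varpi(C_i) \notin A$ means $\partial C_i \not\subseteq T$, hence $T \cap \partial C_i = \emptyset$ by the second condition of compatibility; since $W^k \subseteq T$ for $k \leqslant n-1$, every such $Y_i^k$ is empty. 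Thus no identification is made on $T$: the map $\varpi_A|_T$ is a continuous injection of a compact space into a Hausdorff space, hence a homeomorphism onto $\varpi_A(T)$, carrying the simplicial complex $T$ isomorphically onto its image, and the part of $\tilde\varpi_A(\tilde T)$ of dimension $\leqslant n-1$ is literally a copy of $T$.

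It then remains to handle the top-dimensional cells and check everything glues as a simplicial complex. For each $n$-cell $B$ of $\tilde T$ in the domain of $\tilde\varpi_A$ --- equivalently, as recorded in the previous lemma, each $B$ disjoint from every $C_i$ with $\varpi(C_i) \notin A$ --- the \textbf{Decomposition Theorem} gives that $\tilde\varpi_A$ maps $B$ homeomorphically onto $\tilde\varpi_A(B)$; since $\partial B \subseteq T$ and $\varpi_A|_T$ is a simplicial isomorphism onto its image, $\tilde\varpi_A(B)$ is an honest $n$-simplex attached along $\varpi_A(\partial B)$. Distinct such cells are not identified by $\tilde\varpi_A$ (all identifications live inside the collapsed balls), and two of them meet in $\varpi_A(\partial B \cap \partial B')$, a common face; the fourth condition of compatibility --- $\tilde T$ already restricts to a triangulation of $S^n - \bigcup\{C_i : \partial C_i \subseteq T\}$ --- ensures the incidence data on that complement transports faithfully to the quotient. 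Hence $\tilde\varpi_A(\tilde T)$ is a simplicial complex, i.e. a triangulation of $M^n_{n-1}/\sim_A$, with $(n-1)$-skeleton $\varpi_A(T)$. The one step that needs genuine care --- the main obstacle --- is exactly this top-dimensional gluing: making sure the surviving $n$-cells descend to honest simplices meeting along common faces, rather than, say, two faces of a single $n$-cell being identified or an image failing to be a simplex; this is precisely where injectivity of $\tilde\varpi_A$ off the collapsed balls and the second and fourth conditions of compatibility enter, the rest being bookkeeping inherited from the previous lemma.
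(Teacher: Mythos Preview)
Your approach is exactly what the paper intends: it states this lemma with no proof beyond the word ``Analogously'' and a box, so re-running the previous lemma's construction and then noting that for this particular $A$ no collapsing occurs on $T$ is precisely the expected argument.

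One imprecision is worth flagging. You write that, for a surviving $n$-simplex $B$, ``$\tilde\varpi_A$ maps $B$ homeomorphically onto $\tilde\varpi_A(B)$''. That is not literally true: the surviving $n$-simplices are those disjoint from the $C_i$ with $\varpi(C_i)\in A$ (equivalently $\partial C_i\subseteq T$, by condition~4 of compatibility), not those with $\varpi(C_i)\notin A$ as you wrote; each such $B$ will still contain infinitely many balls $C_j$ with $\varpi(C_j)\notin A$ in its interior, and $\tilde\varpi_A$ collapses each $\bar C_j$ to a point, so $\tilde\varpi_A|_B$ is not injective. What the \textbf{Decomposition Theorem} actually gives (and what the previous lemma's proof records) is that the \emph{image} $\tilde\varpi_A(B)$ is homeomorphic to $B$, via a map agreeing with $\tilde\varpi_A$ on $\partial B\subseteq T$; that is all you need for $\tilde\varpi_A(B)$ to be an honest $n$-simplex attached along $\varpi_A(\partial B)$. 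With that correction your argument goes through, and the rest---injectivity on $T$ via condition~2, faces meeting correctly via condition~4---is handled exactly as you describe.
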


Let $X$ be a Hausdorff compact space and $\pi: X \rightarrow S^{n}$ a topologically quasiconvex map  satisfying:

\begin{enumerate}
    \item The set $P = \{x \in S^{n}: \#\pi^{-1}(x) > 1\}$ is countable and dense on $S^{n}$.
    \item $\forall p \in P$, the space $X_{p} = X/ \! \sim'_{p}$, where $\sim'_{p} = \Delta X \cup \bigcup_{x \neq p} \pi^{-1}(x)^{2}$, is homeomorphic to a closed ball $D^{n}$.
\end{enumerate}

By \textbf{Proposition \ref{quasiconvexity}}, these properties of $X$ are equivalent to $X$ being an inverse limit of $n$-balls that quotient to the $n$-sphere collapsing their boundaries and such that the set of points of the $n$-sphere that are images of some collapsing boundary is countable dense.

Let, for $S \subseteq S^{n}$, $\sim'_{S} = \Delta X \cup \bigcup_{x\notin S} \pi^{-1}(x)^{2}$, $X_{S} = X/ \! \sim'_{S}$ and $\pi_{S}: X \rightarrow X_{S}$ the quotient map. If $S \subseteq S' \subseteq S^{n}$, let $\pi_{S',S}: X_{S'} \rightarrow X_{S}$ and $\pi': X_{S} \rightarrow S^{n}$ be the quotient maps that commute with $\pi$. If $s \in S^{n}$ and $S \subseteq S^{n}$, we use $\pi_{s}$, $\pi_{S,s}$ and $\pi'_{s}$ instead of $\pi_{\{s\}}$, $\pi_{S,\{s\}}$ and $\pi'_{\{s\}}$.

\begin{prop}If $S$ is finite, then $X_{S}$ is homeomorphic to $S^{n}$ minus $\#S$ open tame balls.
\end{prop}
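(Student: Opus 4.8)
The plan is to build $X_S$ by hand out of pieces whose homeomorphism type is already under control, invoking only hypothesis (2) — that each $X_p\cong D^n$ — together with the results on spheres collected above; no appeal to the eventual identification of $X$ with the carpet is needed. First note that we may assume $S\subseteq P$: if $x\notin P$ then $\pi^{-1}(x)$ is a singleton, so $\sim'_S=\sim'_{S\cap P}$ and $X_S=X_{S\cap P}$. Write $S=\{p_1,\dots,p_m\}$, so $\#S=m$; recall that $X_S$ is compact Hausdorff (\textbf{Proposition \ref{quasiconvexity}}), that $\pi'_S$ is a closed map, and that $\pi'_S$ is injective — hence a homeomorphism onto its image — over $S^n-S$.

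\textbf{Step 1 (the fibre over a branch point is the boundary sphere).} For $p\in P$, the map $\pi'_p\colon X_p\to S^n$ collapses $F:=(\pi'_p)^{-1}(p)$ to $p$ and is injective elsewhere; as $X_p\cong D^n$ is compact, $\pi'_p$ is closed, and one checks from this that $\pi'_p$ restricts to a homeomorphism $X_p-F\to S^n-\{p\}\cong\R^n$. Since $X_p-F$ is open in the manifold-with-boundary $X_p$ and is homeomorphic to $\R^n$ (so has empty manifold boundary), it lies in $int(X_p)$, hence $\partial X_p\subseteq F$; conversely, collapsing in $D^n$ any closed set strictly larger than $\partial D^n$ produces a quotient with a non-manifold point, so in fact $F=\partial X_p\cong S^{n-1}$. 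Thus, after homeomorphisms of source and target, $\pi'_p$ is the standard collapse $D^n\to D^n/\partial D^n$.

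\textbf{Steps 2--4 (cut $X_S$, identify the pieces, reassemble).} For each $i$ choose a collar $\mathcal C_i\cong S^{n-1}\times[0,1]$ of $\partial X_{p_i}$ lying inside $(\pi'_{p_i})^{-1}(\B(p_i,\epsilon_i))$, with $\epsilon_i$ so small that the balls $N_i:=\pi'_{p_i}(\mathcal C_i)$ are pairwise disjoint. Then $N_i$ is a closed tame ball of $S^n$ with $p_i\in int(N_i)$ (its complementary closure is again a ball, by Step 1), $\mathcal C_i$ is $\pi'_{p_i}$-saturated so $(\pi'_{p_i})^{-1}(N_i)=\mathcal C_i$, and the two boundary spheres of $\mathcal C_i$ are $(\pi'_{p_i})^{-1}(p_i)$ and $(\pi'_{p_i})^{-1}(\partial N_i)$. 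Now put $R:=S^n-\bigcup_i int(N_i)$, which is $S^n$ minus $m$ disjoint open tame balls, and decompose
\[
X_S=(\pi'_S)^{-1}(R)\ \cup\ \bigcup_{i=1}^{m}(\pi'_S)^{-1}(N_i),
\]
the pieces meeting exactly along the spheres $(\pi'_S)^{-1}(\partial N_i)$. Since $R\subseteq S^n-S$, restriction of $\pi'_S$ gives $(\pi'_S)^{-1}(R)\cong R$; since $N_i$ avoids every $p_j$ with $j\neq i$, the quotient map $\pi_{S,\{p_i\}}\colon X_S\to X_{p_i}$ is injective on $(\pi'_S)^{-1}(N_i)$, hence restricts to a homeomorphism $(\pi'_S)^{-1}(N_i)\cong(\pi'_{p_i})^{-1}(N_i)=\mathcal C_i\cong S^{n-1}\times[0,1]$ carrying $(\pi'_S)^{-1}(\partial N_i)$ onto one end. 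Therefore $X_S$ is the compact manifold $(\pi'_S)^{-1}(R)\cong R$ with a cylinder $S^{n-1}\times[0,1]$ glued onto each of its $m$ boundary spheres; absorbing each such cylinder into a collar of $\partial R$ shows $X_S\cong R$, so $X_S$ is $S^n$ minus $\#S$ open tame balls — a space well defined up to homeomorphism by \textbf{Proposition \ref{characterizationofsphereswithoutballs}}.

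The one genuine obstacle is \textbf{Step 1}: establishing that the collapsed fibre is precisely $\partial X_p$, equivalently that $\pi'_p$ is the standard boundary collapse — this is exactly what makes the preimages of small balls honest cylinders rather than arbitrary neighbourhoods of $\partial X_p$. Everything afterward is routine point-set topology together with the already-established well-definedness of ``$S^n$ minus $k$ open tame balls''.
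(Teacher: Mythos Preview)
Your overall architecture matches the paper's: decompose $X_S$ into a ``core'' $(\pi'_S)^{-1}(R)\cong R$ and pieces $(\pi'_S)^{-1}(N_i)$ over small closed balls $N_i\ni p_i$, show each such piece is a cylinder $S^{n-1}\times[0,1]$, then absorb the cylinders into collars of $\partial R$. The divergence is in how the cylinders are produced, and that is where your argument breaks.

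Your Step~1 asserts that if a closed $F\subseteq D^n$ has $D^n/F\cong S^n$ then $F=\partial D^n$ (phrased as: ``collapsing any closed set strictly larger than $\partial D^n$ produces a non-manifold point''). This is false. For $n=2$ take $F=\partial D^2\cup L$ with $L$ a radius; then $D^2-F$ is the open disk slit along a half-open radius, which in polar coordinates is $(0,1)\times(0,2\pi)\cong\R^2$, so $D^2/F$ --- the one-point compactification of $D^2-F$ --- is $S^2$. Your inclusion $\partial X_p\subseteq F$ is correct, but the reverse inclusion does not follow from what you have. Without $F=\partial X_p$ your collar $\mathcal C_i$ need not be $\pi'_{p_i}$-saturated, so the identification $(\pi'_{p_i})^{-1}(N_i)=\mathcal C_i$ is unjustified and Steps~2--4 do not go through as written.

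The paper sidesteps Step~1 entirely. Having fixed a tame closed ball $N_i\ni p_i$ in $S^n$, it observes that $(\pi'_{p_i})^{-1}(\partial N_i)$ is bicollared --- hence tame by the Generalized Schoenflies Theorem --- in $int(X_{p_i})$, since $\pi'_{p_i}$ is a homeomorphism near it. Capping $X_{p_i}\cong D^n$ off to a sphere and applying the Annulus Theorem to the two tame spheres $(\pi'_{p_i})^{-1}(\partial N_i)$ and $\partial X_{p_i}$ yields directly that
\[
(\pi'_{p_i})^{-1}(N_i)\ =\ X_{p_i}\setminus (\pi'_{p_i})^{-1}(S^n\setminus N_i)\ \cong\ S^{n-1}\times[0,1],
\]
with no information about $F$ required beyond $\partial X_{p_i}\subseteq F$ (which you did prove). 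Replacing your Step~1 by this Annulus Theorem argument repairs the proof and makes it coincide with the paper's.
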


\begin{proof} Let $\{B_{s}\}_{s \in S}$ be a set of open tame balls on $S^{n}$ such that their closures are pairwise disjoint and $\forall s \in S$, $s \in B_{s}$. Let $Z = S^{n} - \bigcup_{s \in S} B_{s}$ and $Z' = \pi'^{-1}_{S}(Z)$. We have that $\pi'_{S}|_{Z'}: Z' \rightarrow Z$ is a homeomorphism.

Let $s \in S$. We have that $\pi_{S,s}(Z')$ is homeomorphic to $Z'$. Let $A_{s}$ be the preimage in $X_{S}$ of the boundary of $B_{s}$. We have that $\pi_{S,s}(A_{s})$ is tame in $X_{s}$ since $\pi'_{S}(A_{s})$ is tame in $S^{n}$ and $\pi'_{S}(A_{s}) = \pi'_{s}(\pi_{S,s}(A_{s}))$, where $\pi'_{s}$ is a homeomorphism onto its image, outside of any neighbourhood of $\pi'^{-1}_{s}(s)$. Since $\pi_{S,s}(A_{s})$ is tame in $X_{s}$, then, by the Annulus Theorem, the connected submanifold $Y$ that has $\pi_{S,s}(A_{s}) \cup \pi'^{-1}_{s}(s)$ as its boundary is homeomorphic to $S^{n-1} \times [0,1]$, which implies that $Y \cup \pi_{S,s}(Z')$ is homeomorphic to $Z'$. Let $Z'_{1} = \pi^{-1}_{S,s}(Y \cup \pi_{S,s}(Z'))$. We have that $Z'_{1}$ is homeomorphic to $Z'$. By an induction process (since $S$ is finite), we get that $X_{S}$ is homeomorphic to $Z'$ and then it is homeomorphic to $Z$, which is $S^{n}$ minus $\#S$ open tame balls.
\end{proof}

\begin{prop}\label{cannonissue}(Cannon - Lemma 1 of \cite{Ca}) Let $\tilde{T}$ be a compatible triangulation of $S^{n}$, $T$ be its $n-1$-skeleton and  $\epsilon > 0$.  Let $i \in \N$. Then there exists a subdivision $\tilde{T}'$ of $\tilde{T}$ such that it is compatible, $T' \cap \bigcup_{j \in \N} C_{j} = (T \cap \bigcup_{j \in \N} C_{j}) \cup \partial C_{i}$, where $T'$ is the  $n-1$-skeleton of $\tilde{T}'$, and for every $n$-simplex $D$ in $\tilde{T}'$, $diam \ D < \epsilon$.
\end{prop}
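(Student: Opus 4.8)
The plan is to localize the whole problem inside a single $n$-simplex and there turn it into a statement about avoiding a countable dense set, which the homogeneity results of Section~1 can handle. If $\partial C_i \subseteq T$ already, only the subdivision step is needed, so assume $\partial C_i \not\subseteq T$. Since $T \subseteq M^n_{n-1}$ the skeleton misses the open ball $C_i$, and since $T \cap \partial C_i = \emptyset$ it misses $\overline{C_i}$; being the $(n-1)$-skeleton, $T$ therefore forces $\overline{C_i}$ to lie in the interior of a unique $n$-simplex $\Delta$ of $\tilde T$. A short argument from compatibility (condition $2$, plus the fact that each $\overline{C_j}$ is $n$-dimensional while $\partial\Delta$ is $(n-1)$-dimensional) shows that every carpet ball meeting $\mathrm{int}\,\Delta$ is actually contained in $\mathrm{int}\,\Delta$, with pairwise disjoint closures, diameters tending to $0$, and union dense in $\overline{\Delta}$. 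Hence it suffices to replace $\tilde T\!\restriction_{\overline{\Delta}}$ by a triangulation of $\overline{\Delta}$ that (i) restricts to $\tilde T$ on $\partial\Delta$, (ii) has $\overline{C_i}$ as one of its (necessarily single) $n$-cells, (iii) has every $n$-cell of diameter $<\epsilon$, and (iv) has $(n-1)$-skeleton disjoint from every open $C_j$; gluing it to $\tilde T\!\restriction_{S^n\setminus\mathrm{int}\,\Delta}$ produces the desired subdivision $\tilde T'$, which is compatible and satisfies $\{j:\partial C_j\subseteq T'\}=\{j:\partial C_j\subseteq T\}\cup\{i\}$. (Implicit in the statement is that $\epsilon$ exceeds $\mathrm{diam}\,\overline{C_i}$ and the diameters of the finitely many simplices of $\tilde T$ that already coincide with carpet balls, since such simplices cannot be subdivided; we work under that understanding.)

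To build the triangulation of $\overline{\Delta}$ I would collapse the competing balls. First fix, by \textbf{Lemma \ref{ballseparation}} (or a bicollar), a tame ball $E$ with $\overline{C_i}\subseteq \mathrm{int}\,E \subseteq E \subseteq \mathrm{int}\,\Delta$ and $\mathrm{diam}\,E<\epsilon$, taken disjoint from all carpet balls of diameter $\geq \mathrm{diam}\,E$. Apply the \textbf{Decomposition Theorem} to the family of carpet balls contained in $\mathrm{int}\,\Delta$ other than $\overline{C_i}$ — the associated relation is topologically quasiconvex because the diameters tend to $0$ — using an open set that is a union of thin tame neighbourhoods of these balls, chosen to miss $E$. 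This gives a surjection $q\colon \overline{\Delta}\to\overline{\Delta}$, the identity on $\partial\Delta$ and on $E$, collapsing each of those balls to a point and a homeomorphism elsewhere; the collapse points form a countable dense set $P\subseteq \mathrm{int}\,\Delta\setminus E$. In the quotient ball choose a triangulation $\tilde S$ extending $\tilde T\!\restriction_{\partial\Delta}$, all of whose $n$-cells have small diameter, having $\overline{C_i}$ itself as a single cell inside $E$ (triangulate $E\setminus C_i\cong S^{n-1}\times[0,1]$, a manifold by the \textbf{Annulus Theorem}, finely rel $\partial E$ and $\partial C_i$, and $\overline{\Delta}\setminus\mathrm{int}\,E$ finely rel $\partial\Delta$ and $\partial E$). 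Since the $(n-1)$-skeleton of $\tilde S$ has empty interior and $P$ is countable dense in $\mathrm{int}\,\Delta\setminus E$, \textbf{Corollary \ref{avoidemptyinterior}}, applied in that open manifold (so the homeomorphism fixes $\partial\Delta$ and $\overline{C_i}\subseteq E$ pointwise, and — as its proof via Proposition~\ref{cotabledensehomogeneous} shows — can be taken arbitrarily $C^0$-small), pushes the skeleton off $P$ while keeping all $n$-cells of diameter $<\epsilon$. Transporting this triangulation back through $q$ (blowing each collapsed point up to the corresponding $\overline{C_j}$ inside the cell that contained it, which is homeomorphic to a ball by the decomposition theory, and leaving $E$ and $\overline{C_i}$ untouched) yields a triangulation of $\overline{\Delta}$ with $(n-1)$-skeleton disjoint from every open $C_j$, with $\overline{C_i}$ a single $n$-cell, and with all $n$-cells still of diameter $<\epsilon$.

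The main obstacle is exactly the tension between making the cells small and keeping the skeleton out of the carpet balls, whose union is dense in $S^n$: a generic small subdivision will always cut through infinitely many of them, and there is no direct homogeneity statement letting one isotope an $(n-1)$-complex off an open dense set. The device that resolves this is the collapse $q$, which converts "avoid a dense family of balls" into "avoid a countable dense set of points," a condition to which Corollary~\ref{avoidemptyinterior} does apply; the delicate bookkeeping is then (a) arranging $\overline{C_i}$, or a tame enlargement of controlled diameter, to survive the collapse as one cell fixed by the perturbation, and (b) controlling the distortion of $q$ and of the perturbing homeomorphism so that diameters stay below $\epsilon$ — which is why $q$ is assembled from maps supported in thin neighbourhoods of the (small) balls and why the perturbation is taken $C^0$-small. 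Once the local retriangulation of $\overline{\Delta}$ is in hand, the verification that the glued-up $\tilde T'$ is a genuine subdivision, is compatible, and has the stated skeleton is routine.
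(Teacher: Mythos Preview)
The paper does not prove this proposition; it simply quotes it as Lemma~1 of Cannon \cite{Ca}, with a remark that the case $n=2$ is Whyburn's. There is therefore no in-paper argument to compare against, and your proposal should be read as an attempted reconstruction of Cannon's lemma.

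Your overall plan---localize to the simplex $\Delta\supset\overline{C_i}$, collapse the remaining carpet balls in $\mathrm{int}\,\Delta$ to a countable dense set $P$ via the Decomposition Theorem, triangulate the quotient finely with $\overline{C_i}$ as a distinguished cell, push the $(n-1)$-skeleton off $P$ with Corollary~\ref{avoidemptyinterior}, and pull back through $q$---is the right circle of ideas and is close in spirit to Cannon's own approach. Two steps, however, are not under control. First, the diameter bound on the pulled-back cells is unjustified and, as written, false: if $p_j\in\mathrm{int}\,D$ then $q^{-1}(D)\supseteq\overline{C_j}$, so $\mathrm{diam}\,q^{-1}(D)\geq\mathrm{diam}\,C_j$ no matter how thin the neighbourhood supporting $q$ near $C_j$ is chosen; since you collapse \emph{every} $C_j\subset\mathrm{int}\,\Delta$ other than $C_i$, this breaks whenever some such $C_j$ has diameter close to $\epsilon$. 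Your later parenthetical ``(small) balls'' suggests you sense the issue, but it contradicts the earlier instruction to collapse them all. (Note also that the statement itself is only satisfiable when $\epsilon>\mathrm{diam}\,C_j$ for every $j$ with $\partial C_j\not\subseteq T'$---not just for $C_i$ and the already-triangulated balls you flagged---since each such $\overline{C_j}$ must sit inside a single $n$-simplex of $\tilde T'$; under that hypothesis your argument can be repaired by bounding the support of $q$ near each $C_j$ by a set of diameter only slightly larger than $\mathrm{diam}\,C_j$ and choosing the quotient mesh accordingly.) Second, the assertion that $q^{-1}(D)$ is a ball ``by the decomposition theory'' requires $q^{-1}(\partial D)$ to be tame in $S^n$; this is not automatic, because although $\partial D$ misses $P$, every neighbourhood of $\partial D$ meets the dense set $P$, so one cannot simply pull back a bicollar through $q$. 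Establishing this tameness is exactly where the hypothesis $n\neq 4$ is used in Cannon's argument, and it needs a separate justification that your sketch does not supply.
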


\begin{obs}For $n = 2$, the proposition above is due to Whyburn (Lemma 1 of \cite{Wh}).

This proposition is stated and proved in \cite{Ca} with the restriction that $n \neq 4$, since some theorems used in Cannon's proof had this restriction at that time. However, Cannon's proof works without this restriction, as we comment on the \textbf{Appendix}.
\end{obs}

\begin{teo}\label{blowupissierpinski}$X$ is homeomorphic to $M^{n}_{n-1}$.
\end{teo}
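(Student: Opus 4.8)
The plan is to build a homeomorphism $h: X \to M^n_{n-1}$ as a limit of partial homeomorphisms defined on nested dense subsets, using the same machinery that appears in Proposition \ref{aproximaçãoportriangulacoes}. Since both $X$ and $M^n_{n-1}$ map topologically quasiconvexly onto $S^n$ (collapsing countable dense families of ``replacement'' balls), by Proposition \ref{quasiconvexity} it suffices to set things up so that at each finite stage $S$ the spaces $X_S$ and $M^n_{n-1}/\!\sim_{A}$ are identified compatibly, and to take the inverse limit over all finite subsets. Concretely, enumerate $P = \{p_k\}_{k \in \N}$ (the branch points of $\pi$) and $Q = \{q_k\}_{k \in \N}$ (the images $\varpi(C_i)$, which are countable dense in $S^n$). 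First I would use Corollary \ref{avoidemptyinterior} to move one of these countable dense sets off the other inside $S^n$, so that without loss of generality we may arrange the two families of removed balls to be ``interleaved'' in a controlled way; more importantly, I would use Cannon's Lemma (Lemma 1 of \cite{Ca}, quoted just before the theorem) to produce, on the $M^n_{n-1}$ side, a nested sequence of compatible triangulations $\tilde T_1 \subseteq \tilde T_2 \subseteq \cdots$ whose mesh tends to $0$ and such that the $k$-th triangulation ``sees'' exactly the boundary spheres $\partial C_{i_1}, \dots, \partial C_{i_k}$ corresponding to the first $k$ points of $Q$.

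Next I would transport these triangulations through the quotient: by Lemma \ref{triangulationquotient}, $\tilde T_k$ descends to a genuine triangulation $\tilde\varpi_{A_k}(\tilde T_k)$ of $M^n_{n-1}/\!\sim_{A_k}$, where $A_k = \{\varpi(\partial C_{i_1}),\dots,\varpi(\partial C_{i_k})\}$, and (by the Proposition identifying $M^n_{n-1}/\!\sim_{A}$ with $S^n$ minus $\#A$ tame balls) this is a triangulated copy of $S^n$ minus $k$ tame balls. On the $X$ side, $X_{S_k}$ (with $S_k = \{p_1,\dots,p_k\}$) is also $S^n$ minus $k$ tame balls. Now I would build, recursively in $k$, cellular isomorphisms $g_k$ between a fixed compatible triangulation of $X_{S_k}$ and $\tilde\varpi_{A_k}(\tilde T_k)$, matching the $k$-th pair of removed balls and agreeing with $g_{k-1}$ on the previously-built skeleton; the existence of each extension is exactly Proposition \ref{characterizationofsphereswithoutballs} (a homeomorphism of $S^n$ minus $n$ tame balls realizing prescribed orientation-preserving boundary identifications, which one arranges by the isotopy classification and the $C_i$ lemma so as to fix the old skeleton). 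Lifting $g_k$ back up through $\pi_{S_k}$ and $\varpi_{A_k}$ (which are homeomorphisms away from the small collapsed sets) gives a partial homeomorphism $f_k$ between dense triangulated subsets of $X$ and of $M^n_{n-1}$, with $f_k$ extending $f_{k-1}$ on skeleta and with mesh going to $0$ on the target.

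Finally I would invoke Proposition \ref{aproximaçãoportriangulacoes} (or directly Proposition \ref{fort} applied to $F_k(x)$ = the cell of the $k$-th target triangulation containing $f_k(x)$, together with the analogous construction for the inverses) to conclude that $\{f_k\}$ converges uniformly to a continuous surjection $f: X \to M^n_{n-1}$, that $\{f_k^{-1}\}$ converges to a continuous $g$, and that $g\circ f = \mathrm{id}$, $f\circ g = \mathrm{id}$ on the dense sets $\bigcup_k T_k$, hence everywhere; thus $f$ is the desired homeomorphism. The main obstacle I anticipate is the recursive step: one must simultaneously (a) keep the triangulations on both sides compatible with the Sierpiński structure and nested, (b) arrange the cellular isomorphisms to fix all previously matched skeleta while still matching the newly exposed boundary sphere — which forces careful use of Lemma \ref{ballseparation} to isolate the new ball in a tame ball disjoint from the old skeleton — and (c) control diameters, since the triangulations on the $X$-side have no a priori metric behavior and the mesh condition must be imported from the $M^n_{n-1}$-side via the quotient maps, which are only ``almost'' isometries away from finitely many small sets. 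Handling the orientation bookkeeping (ensuring the prescribed boundary maps are orientation-preserving so Proposition \ref{characterizationofsphereswithoutballs} applies) is a further technical point, resolved by composing with a reflection if necessary.
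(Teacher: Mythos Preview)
Your overall strategy matches the paper's: build homeomorphisms $g_k$ between the finite quotients $M^n_{n-1}/\!\sim_{A_k}$ and $X_{S_k}$ that agree on successively finer skeleta, then pass to the limit via Proposition~\ref{aproximaçãoportriangulacoes}. The gap is in the recursive step, and it is precisely where you anticipated trouble. Your rigid choice $A_k=\{q_1,\dots,q_k\}$, $S_k=\{p_1,\dots,p_k\}$ cannot in general be made compatible with the skeleton constraint. After $g_{k-1}$ is built, the new point $q_k$ lies in the interior of some $n$-simplex $D$ of $\tilde T_{k-1}$, while the point of $S^n$ corresponding under $g_{k-1}$ to $p_k$ lies in some simplex $D'$. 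If $D\neq D'$ --- and nothing in your setup prevents this --- no homeomorphism $g_k$ can agree with $g_{k-1}$ on the $(n-1)$-skeleton $T_{k-1}$ (which separates $D$ from $D'$) and at the same time send the boundary sphere over $q_k$ to the boundary sphere over $p_k$. Lemma~\ref{ballseparation} and Proposition~\ref{characterizationofsphereswithoutballs} do not help: they operate inside a single ball, whereas the obstruction is that the two new spheres live in different cells.

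The paper fixes this with a genuine back-and-forth. First it uses countable dense homogeneity of $S^n$ to \emph{identify} $P$ with $\{\varpi(\partial C_i)\}$ (the opposite of your use of Corollary~\ref{avoidemptyinterior} to separate them). Then at step $2i+1$ it forces $\partial C_{i+1}$ into the carpet-side skeleton and, for each new $s\in S_{2i+1}\setminus S_{2i}$, \emph{chooses} a point $p_s\in P$ in the same $n$-simplex of $\tilde T_{2i}$ as $s$; this freedom is exactly what makes the cell-by-cell extension via Proposition~\ref{characterizationofsphereswithoutballs} possible. At step $2i+2$ it does the symmetric move, forcing a prescribed element of $P$ into $R_{2i+2}$. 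The alternation guarantees that both $\bigcup_k S_k$ and $\bigcup_k R_k$ exhaust $P$, so the limit is a bijection. Without allowing one side to be chosen freely at each step, the inductive extension fails; you need to replace the fixed pairing $p_k\leftrightarrow q_k$ by this back-and-forth scheme.
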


\begin{obs}This theorem is proved for $n = 2$ in \cite{TW2}. Our methods used in this proof are similar to theirs.
\end{obs}

\begin{proof}Here we do a back-and-forth argument.

Let $\tilde{T}_{0}$ be a compatible triangulation of $S^{n}$ and its $n-1$-skeleton $T_{0}$.

Since $S^{n}$ is countable dense homogeneous, we can assume that $M^{n}_{n-1}/ \! \sim = S^{n}$ and $P = \{x \in S^{n}: \#\varpi^{-1}(x) > 1\}$ (remember that $P$ was originally defined for $X$ and not for $M^{n}_{n-1}$).

Let $S_{0} = \varpi(\bigcup_{j \in \N}C_{j}\cap T_{0})$ and $R_{0} = S_{0}$. By \textbf{Proposition \ref{characterizationofsphereswithoutballs}}, there is a homeomorphism $f_{0}:M^{n}_{n-1}/ \!\! \sim_{S_{0}} \rightarrow X_{R_{0}}$. We have that $\forall x \in \varpi_{S_{0}}(T_{0})$, $\#\pi_{R_{0}}^{-1}(f_{0}(x)) = 1$. Then $\pi_{R_{0}}|_{\pi_{R_{0}}^{-1}(\varpi_{S_{0}}(T_{0}))}$ is a homeomorphism onto its image, which implies that there is an embedding $\tilde{f}_{0}: T_{0} \rightarrow X$ that commutes the diagram:

$$ \xymatrix{ T_{0} \ar@{(->}[r]^<{\ \ \ \ \ \ \  \varpi|_{T_{0}}} \ar@{-->}[d]_{\tilde{f}_{0}} & M^{n}_{n-1}/ \!\! \sim_{S_{0}} \ar[d]_{f_{0}} \\
            X  \ar[r]_{\pi} & X_{R_{0}} }$$

Suppose that we have a family of triangulations $\{\tilde{T}_{k}: 0 \leqslant k \leqslant 2i\}$ that are compatible and, together with their respective $n-1$-skeletons $T_{k}$, satisfies:

\begin{enumerate}
    \item $\forall k \leqslant 2i$, $\tilde{T}_{k}$ a subdivision of $\tilde{T}_{k-1}$.
    \item $\bigcup_{j = 1}^{i} \partial C_{j} \subseteq T_{2i}$.
\end{enumerate}

Let $S_{k} = \varpi(\bigcup_{j \in \N}C_{j}\cap T_{k})$. We have that $S_{k}\supseteq S_{k-1}$.

Suppose also that we have a family of embeddings $\tilde{f}_{k}: T_{k} \rightarrow X$ and a family of sets $R_{k} = \pi(\tilde{f}_{k}(\bigcup_{j \in \N}C_{j}\cap T_{k}))$, with $0 \leqslant k \leqslant 2i$, satisfying:

\begin{enumerate}
    \item[3.] $\forall k \leqslant 2i$, $\bigcup\{\pi(\partial C_{j}): 2j \leqslant k\} \subseteq R_{k}$.
\end{enumerate}

We have that $\forall k \leqslant 2i$, $R_{k} \supseteq R_{k-1}$.

Finally, suppose that we have families of homeomorphisms $\\ f_{k}: M^{n}_{n-1}/\!\sim_{S_{k}} \rightarrow X_{R_{k}}$, $\hat{f}_{k}: S^{n} \rightarrow S^{n}$ and, $\forall s \in S_{k}$, $f_{s,k}: M^{n}_{n-1}/ \! \sim_{s} \rightarrow X_{\hat{f}_{k}(s)}$, with  $0 \leqslant k \leqslant 2i$, satisfying:

\begin{enumerate}
    \item[4.] The diagram commutes, for every $s \in S_{k}$: 
    
    $$ \xymatrix{  T_{k} \ar@{(->}[rr]^-{ \varpi_{S_{k}}|_{T_{k}}} \ar@{-->}[d]_{\tilde{f}_{k}} & & M^{n}_{n-1}/ \!\! \sim_{S_{k}}  \ar[d]_{f_{k}} \ar[r]^{\varpi'_{S_{k},s}} &  M^{n}_{n-1}/ \! \sim_{s} \ar[d]_{f_{s,k}} \ar[r]^-{\varpi'_{s}} & S^{n} \ar[d]_{\hat{f}_{k}} \\
           X  \ar[rr]_{\pi_{R_{k}}} & & X_{R_{k}} \ar[r]_{\pi'_{R_{k},\hat{f}_{k}(s)}} & X_{\hat{f}_{k}(s)} \ar[r]_{\pi'_{\hat{f}_{k}(s)}} & S^{n} }$$
           
    \item[5.] $\forall k \leqslant 2i$, the diameter of every simplex of $\tilde{\varpi}_{s}(\tilde{T}_{k})$,  $f_{s,k-1}\circ\tilde{\varpi}_{s}(\tilde{T}_{k})$, for every choice of $s \in S_{k-1}$, and $\tilde{\varpi}(\tilde{T}_{k})$ and $\hat{f}_{k-1}\circ \tilde{\varpi}(\tilde{T}_{k})$ is less than $\frac{1}{2^{k}}$.
\end{enumerate}

Observe that the maps $f_{s,k}$ and $\hat{f}_{k}$ are uniquely defined by $f_{k}$ and the commutative diagram.

Let $\tilde{T}_{2i+1}$ be a subdivision of $\tilde{T}_{2i}$ that is compatible and let $T_{2i+1}$ its $n-1$-skeleton, satisfying:

\begin{enumerate}
    \item[2.] $\partial C_{i+1} \subseteq T_{2i+1}$.
    \item[5.] The diameter of every simplex of $\tilde{\varpi}_{s}(\tilde{T}_{2i+1})$,  $f_{s,2i}\circ\tilde{\varpi}_{s}(\tilde{T}_{2i+1})$, for every choice of $s \in S_{2i}$, and $\tilde{\varpi}(\tilde{T}_{2i+1})$ and $\hat{f}_{2i}\circ \tilde{\varpi}(\tilde{T}_{2i+1})$ is less than $\frac{1}{2^{2i+1}}$.
\end{enumerate}

We can do that by subdividing $\tilde{T}_{2i}$, using the last proposition, to have simplexes small enough such that the uniform continuity property of these maps gives induced triangulations satisfying (5).

If $\varpi(\partial C_{i+1}) \in S_{2i}$, then we take $S_{2i+1} = S_{2i}$, $R_{2i+1} = R_{2i}$, $\tilde{T}_{2i+1} = \tilde{T}_{2i}$, $f_{2i+1} = f_{2i}$ and $\hat{f}_{2i+1} = \hat{f}_{2i}$. In this case, all six conditions are satisfied.

Suppose that $\varpi(\partial C_{i+1}) \notin S_{2i}$. Let  $S_{2i+1} = \varpi(\bigcup_{j \in \N}C_{j}\cap T_{2i+1})$. We have that $S_{2i+1}\supseteq S_{2i}$ and $\varpi(\partial C_{2i+1}) \in S_{2i+1}$. For every $s \in S_{2i+1}-S_{2i}$, take $E_{s}$ as the $n$-simplex in $\tilde{T}_{2i}$ such that $s \in \tilde{\varpi}(E_{s})$, choose $p_{s} \in \tilde{\varpi}(E_{s}) \cap P$ (since  $s \in \tilde{\varpi}(E_{s})$, then $int(\tilde{\varpi}(E_{s}))\neq \emptyset$) and define $R_{2i+1} = R_{2i} \cup \{p_{s}: s \in S_{2i+1}-S_{2i}\}$. We have that $R_{2i+1} \supseteq R_{2i}$ and the condition (3) is automatically satisfied (it is necessary to add something to the set to satisfy this condition only on even steps).

By \textbf{Lemma \ref{triangulationquotient}}, $\tilde{T}_{2i}$ induces a triangulation in $M^{n}_{n-1}/ \! \! \sim_{S_{2i+1}}$.  Let $D$ be a $n$-simplex in $\tilde{T}_{2i}$ and $S_{D} = P \cap \tilde{\varpi}(int \ D)$. We need a homeomorphism $f_{D}: (D\cap M^{n}_{n-1})/ \!\! \sim_{S_{2i+1}} \rightarrow Y_{D}$, where $Y_{D}$ is the closure of the connected component of $X_{R_{2i+1}} - \pi_{R_{2i+1}}(\tilde{f}_{2i}(T_{2i}))$ that contains $\pi'^{-1}_{R_{2i+1}}(\hat{f}_{2i}(S_{D}))$, such that $f_{D}|_{\varpi_{S_{2i+1}}(\partial D)} = \pi_{R_{2i+1}}\circ \tilde{f}_{2i}\circ \varpi^{-1}_{S_{2i+1}}|_{\varpi_{S_{2i+1}}(\partial D)}$. Every term of this composition is well defined and continuous, when restricted to suitable domains. So $f_{D}|_{\varpi_{S_{2i+1}}(\partial D)}$ is well defined, continuous and, by the choice of the points $p_{s}$, the spaces $(D\cap M^{n}_{n-1})/ \!\! \sim_{S_{2i+1}}$ and $Y_{D}$ are homeomorphic. Then, by \textbf{Proposition \ref{characterizationofsphereswithoutballs}}, there exists a homeomorphism $f_{D}$ that extends $\pi_{R_{2i+1}}\circ \tilde{f}_{2i}\circ \varpi^{-1}_{S_{2i+1}}|_{\varpi_{S_{2i+1}}(\partial D)}$ and such that $\forall s \in S_{2i+1} \cap S_{D}$, $f_{D}(\varpi'^{-1}_{S_{2i+1}}(s)) = \pi'^{-1}_{R_{2i+1}}(p_{s})$. By \textbf{Proposition \ref{cotabledensehomogeneous}} we can suppose also that $f_{D}(S_{2i+1}(S_{D})) = \pi'^{-1}_{R_{2i+1}}(S_{D})$. Take $f_{2i+1}: M^{n}_{n-1}/\! \sim_{S_{2i+1}} \rightarrow X_{R_{2i+1}}$ as $f_{2i+1}(x) = f_{D}(x)$ if $x \in (D\cap M^{n}_{n-1})/\! \sim_{S_{2i+1}}$. Since $f_{2i+1}|_{T_{2i}}$ does not depend of the choice of the $n$-simplex $D$ and two $n$-simplexes intersect only in $T'_{2i}$, we have that $f_{2i+1}$ is well defined and a homeomorphism. It is immediate that there is a homeomorphism $\hat{f}_{2i+1}: S^{n} \rightarrow S^{n}$ that commutes the diagram:

$$ \xymatrix{ M^{n}_{n-1}/ \! \! \sim_{S_{2i+1}} \ar[d]_{f_{2i+1}} \ar[r]^-{\varpi'_{S_{2i+1}}} & S^{n} \ar[d]_{\hat{f}_{2i+1}} \\
            X_{R_{2i+1}} \ar[r]_{\pi'_{R_{2i+1}}} & S^{n} }$$

Observe that, since it holds for every $n$-simplex $D$, we have that  $\hat{f}_{i}(P) = P$. We have also that $\hat{f}_{2i+1}|_{\varpi(T_{2i})} = \hat{f}_{2i}|_{\varpi(T_{2i})}$.

Define $\tilde{f}_{2i+1}: T_{2i+1} \rightarrow X$ as $\tilde{f}_{2i+1} = \pi^{-1}_{R_{2i+1}}\circ f_{2i+1} \circ \varpi_{S_{2i+1}}|_{T_{2i+1}}$. It is clear that $\tilde{f}_{2i+1}$ is an embedding that extends $\tilde{f}_{2i}$.

For every choice of $s \in S_{2i+1}$, let $f_{s,2i+1}: M^{n}_{n-1}/ \! \sim_{s} \rightarrow X_{\hat{f}_{2i+1}(s)}$ be the homeomorphism that commutes the diagram (i.e., satisfies (4)):

$$ \xymatrix{  T_{2i+1} \ar@{(->}[rr]^-{ \varpi_{S_{2i+1}}|_{T_{2i+1}}} \ar@{-->}[d]_{\tilde{f}_{2i+1}} & &  M^{n}_{n-1}/ \!\! \sim_{S_{2i+1}}  \ar[d]_{f_{2i+1}} \ar[r]^{\varpi'_{S_{2i+1},s}} &  M^{n}_{n-1}/ \!\! \sim_{s} \ar[d]_{f_{s,2i+1}} \ar[r]^-{\varpi'_{s}} & S^{n} \ar[d]_{\hat{f}_{2i+1}} \\
            X  \ar[rr]_{\pi_{R_{2i+1}}} & &  X_{R_{2i+1}} \ar[r]_{\pi'_{R_{2i+1},s}} & X_{\hat{f}_{2i+1}(s)} \ar[r]_{ \ \ \ \ \ \ \ \ \ \pi'_{\hat{f}_{2i+1}(s)}} & S^{n} }$$

Now, if we have $\tilde{T}_{2i+1}$, $S_{2i+1}$, $R_{2i+1}$, $f_{2i+1}$, $\hat{f}_{2i+1}$ and $\forall s \in S_{2i+1}$, $f_{s,2i+1}$, then we do an entirely analogous construction, but requiring that $\pi(\partial C_{i}) \in R_{2i+2}$.

By \textbf{Proposition \ref{aproximaçãoportriangulacoes}}, there are homeomorphisms $\hat{f}: S^{n} \rightarrow S^{n}$ and, $\forall s \in P$, $f_{s}: M^{n}_{n-1}/ \!\! \sim_{s} \rightarrow X_{\hat{f}(s)}$ such that $\forall i \in \N$, $\hat{f}|_{\varpi(T_{i})} = \hat{f}_{i}|_{\varpi(T_{i})}$ and $f_{s}|_{\varpi_{s}(T_{i})} = f_{s,i}|_{\varpi_{s}(T_{i})}$. Let $T = \bigcup_{i \in \N}T_{i}$. Since it commutes for every $i \in \N$, the following diagram commutes (for every $s \in P$):

$$ \xymatrix{  \varpi_{s}(T_{i}) \ar[d]_{f_{s}} \ar[r]^{\varpi'_{s}} & S^{n} \ar[d]_{\hat{f}} \\
            f_{s}(\varpi_{s}(T_{i})) \ar[r]_-{\pi'_{\hat{f}(s)}} & S^{n} }$$
            
Since $T$ contains $\bigcup\{C_{i}: i \in \N\}$, it is dense on $M^{n}_{n-1}$ and $\varpi_{s}(T)$ is dense on $M^{n}_{n-1}/ \! \sim_{s}$, which implies that the following diagram commutes:  

$$ \xymatrix{  M^{n}_{n-1}/ \!\! \sim_{s} \ar[d]_{f_{s}} \ar[r]^-{\varpi'_{s}} & S^{n} \ar[d]_{\hat{f}} \\
            X_{\hat{f}(s)} \ar[r]_{\pi'_{\hat{f}(s)}} & S^{n} }$$
            
By the back-and-forth construction, $\hat{f}(P) = P$. Then the family of homeomorphisms $\{\hat{f}, f_{i}, i \in \N\}$ induces an homeomorphism $\tilde{f}: M^{n}_{n-1} \rightarrow X$.
\end{proof}

\section*{Appendix}

Here we show a brief comment on what works in the Approximation Theorem for Cellular Maps on dimension 4 that should be enough for Cannon's lemma that we used before (\textbf{Proposition \ref{cannonissue}}).

\begin{defi}Let $X$ be a topological space, $(Y,d)$ a metric space and $\pi: X \rightarrow Y$ a continuous map. We say that $\pi$ is a near homeomorphism if for every $\epsilon > 0$, there exists a homeomorphism $f: X \rightarrow Y$ such that for every $x \in X$, $d(\pi(x),f(x)) < \epsilon$.   
\end{defi}

\begin{prop}\label{nearfreedman} (Freedman, Theorem 9.1' of \cite{Fre}) Let $\pi: S^{n} \rightarrow S^{n}$ be a topologically quasiconvex map such that the set $\{x \in S^{n}: \# \pi^{-1}(x) > 1\}$ is nowhere dense in $S^{n}$. Then $\pi$ is a near homeomorphism (for any metric on $S^{n}$).
\end{prop}

\begin{prop}\label{aproximantionfixingcompact} (Corollary 7.1 of \cite{Fre}) Let $\pi: M \rightarrow N$ be a near homeomorphism between compact manifolds. Let $C$ be a compact subset of $M$ such that for all $p \in C$, $\{p\}$ is saturated. Then, for every $\epsilon > 0$, there exists a homeomorphism $f: M \rightarrow N$ such that for every $x \in M$, $d(\pi(x),f(y)) < \epsilon$ and $f|_{C} = \pi|_{C}$.
\end{prop}

Combining the two propositions, we get:

\begin{prop}\label{cannonfixed} Let $\pi: S^{n} \rightarrow S^{n}$ be a topologically quasiconvex map such that the set $\{x \in S^{n}: \# \pi^{-1}(x) > 1\}$ is nowhere dense in $S^{n}$, $U = \{U_{\alpha}\}_{\alpha \in \Gamma}$ be an open cover of $S^{n}$ where all sets are saturated and $C$ be a compact subset of $S^{n}$ such that for all $p \in C$, $\{p\}$ is saturated. Then there exists a homeomorphism $g: S^{n} \rightarrow S^{n}$ such that $g \circ \pi|_{C} = id_{C}$ and for every $x \in S^{n}$, there exists $\alpha \in \Gamma$ such that $x, g \circ \pi(x) \in U_{\alpha}$.
\end{prop}

\begin{proof}Fix a metric $d$ on $S^{n}$. The set $\pi(U) = \{\pi(U_{\alpha})\}_{\alpha \in \Gamma}$ is an open cover of $N$. Let $\epsilon > 0$ be the Lebesgue number of the covering $\pi(U)$. By the \textbf{Proposition \ref{nearfreedman}}, the map $\pi$ is a near homeomorphism, which implies, by \textbf{Proposition \ref{aproximantionfixingcompact}}, that there exists a homeomorphism 
$f: M \rightarrow N$ such that $f|_{C} = \pi|_{C}$ and $\forall x \in S^{n}$, $d(\pi(x),f(x)) < \epsilon$. 

Let $x \in M$. Take $y = f^{-1} \circ \pi(x)$. We have that $d(f(y), \pi(y)) < \epsilon$. But $f(y) = \pi(x)$, which implies that $d(\pi(x), \pi(y)) < \epsilon$. Since $\epsilon$ is the Lebesgue number of the cover $\pi(U)$, there exists $\alpha \in \Gamma$ such that $\pi(x), \pi(y) \in \pi(U_{\alpha})$. Since $U_{\alpha}$ is saturated, we have that $x,y \in U_{\alpha}$. Then $x, f^{-1}\circ \pi(x) \in U_{\alpha}$. We have also that $f|_{C} = \pi|_{C}$, which implies that $f^{-1} \circ \pi|_{C} = id_{C}$.  Thus, $f^{-1}$ is the homeomorphism that we want.   
\end{proof}

\begin{obs}Note that if we take $C$ as a $n-1$-sphere embedded in $S^{n}$, then the fact that there exists an homeomorphism $f: S^{n} \rightarrow S^{n}$ that sends $C$ to $\pi(C)$ implies that $C$ is tame if and only if $\pi(C)$ is tame.
\end{obs}

\textbf{Proposition \ref{cannonfixed}} and the \textbf{Remark} are enough to replace the Approximation Theorem for Cellular Maps and its corollary on  Cannon's proof of \textbf{Proposition \ref{cannonissue}}. Since the Annulus Theorem also works in dimension $4$ (Quinn, \cite{Qu}), then there are no more restrictions on the dimension. Thus, \textbf{Proposition \ref{cannonissue}} works in dimension $4$ as well. For the same reason, Cannon's theorem that says that the space $M^{n}_{n-1}$ does not depend on the choices of removed open balls (Theorem 1 of \cite{Ca}) also works in dimension $4$.


\begin{thebibliography}{99}

\bibitem{AO}
J. M. Aarts and L.G. Oversteegen, The dynamics of the Sierpiński curve.
\textit{Proc. Amer. Math. Soc.}~\textbf{120(3)}
(1994), 965–968.

\bibitem{Ben}
R. Bennett, Countable dense homogeneous space.
\textit{Fund. Math.}~\textbf{74}
(1972), 189–194.

\bibitem{BNW}
A. Bís, H. Nakayama and P. Walczak, Modelling minimal foliated spaces
with positive entropy.
\textit{Hokkaido Math. J.}~\textbf{36(2)}
(2007), 283–310.

\bibitem{BO}
J. Boro\'nski and P. Oprocha, On dynamics of the Sierpins\'ki carpet.
\textit{C. R. Math. Acad. Sci. Paris}~\textbf{356(3)}
(2018), 340-344. 

\bibitem{Br}
M. Brown, A proof of the generalized Schoenflies theorem.
\textit{Bull. Amer. Math. Soc}~\textbf{66}
(1960), 74–76.

\bibitem{Ca}
J. W. Cannon, A positional characterization of the $(n-1)$-dimensional Sierpi\'nski curve in $S^{n}$ ($n \neq 4$).
\textit{Fundamenta Mathematicae}~\textbf{79}
(1973), 107–112.

\bibitem{Dav}
R. J. Daverman, 
\textit{Decompositions of manifolds}
Pure and Applied Mathematics, Academic Press, Inc., 1986.

\bibitem{Fi}
G. M. Fisher, On the group of all homeomorphisms of a manifold.
\textit{Transactions of the American Mathematical Society}~\textbf{97(2)}
(1960), 192-212.

\bibitem{Fre}
M. H. Freedman, The topology of four-dimensional manifolds.
\textit{J. Differential Geometry}~\textbf{17}
(1982), 357-453.

\bibitem{Ki}
R. C. Kirby, Stable homeomorphisms and the annulus conjecture.
\textit{Ann. of Math.}~\textbf{89(2)}
(1969), 575-582.


\bibitem{Me}
D. V. Meyer, A decomposition of $E^{3}$ into points and a null family of tame $3$-cells is $E^{3}$.
\textit{Ann. of Math.}~\textbf{78(2)}
(1963), 600-604.

\bibitem{Mo}
R. L. Moore, Concerning upper semi-continuous collections of continua.
\textit{Trans. Amer. Math. Soc.}~\textbf{27}
(1925), 416-428.

\bibitem{Na}
S. B. Nadler Jr.,
\textit{Continuum theory - An introduction}.
Pure and Applied Mathematics, CRC Press, 1992.

\bibitem{Qu}
F. Quinn, Ends of maps III.
\textit{Diff. Geom.}~\textbf{17}
(1982), 503-521.

\bibitem{So3}
L. H. R. de Souza, Equivariant blowing up of bounded parabolic points \\
Preprint~2020. arXiv:2008.05822v2 [math.GR]

\bibitem{So5}
L. H. de Souza, Constructing Bowditch boundaries of some
relatively hyperbolic groups that are homeomorphic to the $n$-dimensional Sierpi\'nski carpet \\
Preprint~2022. arXiv 2208.06942 [math.GT]

\bibitem{TW}
B. Tshishiku and G. Walsh, On groups with $S^{2}$ Bowditch boundary.
\textit{Groups, Geometry and Dynamics}~\textbf{14}
(2020), 791-811.

\bibitem{TW2}
B. Tshishiku and G. Walsh, Erratum to Groups with $S^{2}$ Bowditch boundary.
\textit{Groups, Geometry and Dynamics}~\textbf{15}
(2021), 1133-1137.

\bibitem{Wh}
G. T. Whyburn, Topological characterization of the Sierpi\'nski curve.
\textit{Fund. Math.}~\textbf{45}
(1958), 320-324.

\end{thebibliography}
\end{document}